\DeclareFontFamily{U}{rsfs}{%
\skewchar\font127}
\DeclareFontShape{U}{rsfs}{m}{n}{%
<-6>rsfs5<6-8.5>rsfs7<8.5->rsfs10}{}
\DeclareSymbolFont{rsfs}{U}{rsfs}{m}{n}
\DeclareRobustCommand*\rsfs{%
\@fontswitch\relax\mathrsfs}
\theoremstyle{plain}
\newtheorem{thm}{Theorem}[section]
\newtheorem{prop}[thm]{Proposition}
\newtheorem{lem}[thm]{Lemma}
\newtheorem{cor}[thm]{Corollary}
\newtheorem{prop-defi}[thm]{Proposition-Definition}
\newtheorem{thm-defi}[thm]{Theorem-Definition}
\newtheorem{lem-defi}[thm]{Lemma-Definition}
\newtheorem{conj}[thm]{Conjecture}
\theoremstyle{definition}
\newtheorem{rmk}[thm]{Remark}
\newtheorem{defi}[thm]{Definition}
\newdimen\argwidth
\def\db[#1\db]{
 \setbox0=\hbox{$#1$}\argwidth=\wd0
 \setbox0=\hbox{$\left[\box0\right]$}
  \advance\argwidth by -\wd0
 \left[\kern.3\argwidth\box0 \kern.3\argwidth\right]}
\newcommand{\cC}{\mathcal{C}}
\newcommand\CC{\mathbb C}
\newcommand{\eE}{\mathcal{E}}
\newcommand{\fF}{\mathcal{F}}
\newcommand{\gG}{\mathcal{G}}
\newcommand{\hH}{\mathcal{H}}
\newcommand{\lL}{\mathcal{L}}
\newcommand{\oO}{\mathcal{O}}
\newcommand{\PP}{\mathbb{P}}
\newcommand{\QQ}{\mathbb{Q}}
\newcommand\RR{\mathbb R}
\newcommand{\sfV}{\mathsf{V}}
\newcommand{\zZ}{\mathcal{Z}}
\newcommand{\ZZ}{\mathbb{Z}}
\newcommand{\vir}{\mathrm{vir}}
\newcommand{\Hom}{\mathop{\rm Hom}\nolimits}
\newcommand{\dR}{\mathbf{R}}
\newcommand{\Hilb}{\mathop{\rm Hilb}\nolimits}
\newcommand{\Pic}{\mathop{\rm Pic}\nolimits}
\newcommand{\Chow}{\mathop{\rm Chow}\nolimits}
\newcommand{\ch}{\mathop{\rm ch}\nolimits}
\newcommand{\Ext}{\mathop{\rm Ext}\nolimits}
\newcommand{\Spec}{\mathop{\rm Spec}\nolimits}
\newcommand{\Coh}{\mathop{\rm Coh}\nolimits}
\newcommand{\cneq}{\mathrel{\raise.095ex\hbox{:}\mkern-4.2mu=}}
\newcommand{\eqcn}{\mathrel{=\mkern-4.5mu\raise.095ex\hbox{:}}}
\newcommand{\ext}{\mathop{\rm ext}\nolimits}
\newcommand{\HOM}{\mathop{{\hH}om}\nolimits}
\newcommand{\EXT}{\mathop{{\eE}xt}\nolimits}
\newcommand{\DT}{\mathop{\rm DT}\nolimits}
\newcommand{\PT}{\mathop{\rm PT}\nolimits}
\newcommand{\tr}{\mathop{\rm tr}\nolimits}
\newcommand{\RHom}{\mathop{\dR\mathrm{Hom}}\nolimits}
\newcommand\mdot{{\scriptscriptstyle\bullet}}
\title[{Curve counting and DT/PT correspondence for CY 4-folds}]
{Curve counting and DT/PT correspondence  \\ for Calabi-Yau 4-folds}
\date{}
\author{Yalong Cao}
\address{Kavli Institute for the Physics and Mathematics of the Universe (WPI), The University of Tokyo Institutes for Advanced Study, The University of Tokyo, Kashiwa, Chiba 277--8583, Japan}
\email{yalong.cao@ipmu.jp}
\author{Martijn Kool}
\address{Mathematical Institute, Utrecht University, P.O.~Box 80010, 3508 TA Utrecht, The Netherlands}
\email{m.kool1@uu.nl}
\begin{document}
\maketitle
\begin{abstract}
Recently, Cao-Maulik-Toda defined stable pair invariants of a compact Calabi-Yau 4-fold $X$. 
Their invariants are conjecturally related to the Gopakumar-Vafa type invariants of $X$ defined using Gromov-Witten theory by Klemm-Pandharipande. In this paper, we consider curve counting invariants of $X$ using Hilbert schemes of curves and conjecture a DT/PT correspondence which relates these to stable pair invariants of $X$.

After providing evidence in the compact case, we define analogous invariants for toric Calabi-Yau 4-folds. 
We formulate a vertex formalism for both theories and conjecture a relation between the (fully equivariant) DT/PT vertex, which we check in several cases. This  relation implies a DT/PT correspondence for toric Calabi-Yau 4-folds with primary insertions.
\end{abstract}

\section{Introduction}

\subsection{GW/GV invariants}

Gromov-Witten invariants of a complex smooth projective variety $X$ are rational numbers (virtually) counting stable maps from curves to $X$. 
Let $X$ be a Calabi-Yau 4-fold\,\footnote{In this paper, a Calabi-Yau 4-fold is a complex smooth projective 4-fold $X$ satisfying $K_X \cong \oO_X$.}, then the virtual dimension formula shows that these invariants vanish unless $g=0$ or $1$. 
Since Gromov-Witten invariants are rational numbers, their enumerative meaning is a priori unclear.
In \cite{KP}, Klemm and Pandharipande defined Gopakumar-Vafa type invariants of $X$, in terms of Gromov-Witten invariants of $X$,
and conjectured their integrality.

More precisely, let $\overline{M}_{g,m}(X,\beta)$ be the moduli space of stable maps $f : C \rightarrow X$, where $C$ is a (connected) nodal curve of arithmetic genus $g$ with $m$ marked points satisfying $f_*[C] = \beta \in H_2(X)$.
This space has a virtual class with virtual dimension $1-g+m$. 
Consider the following Gromov-Witten invariants (with primary insertions)
\begin{align*}
\mathrm{GW}_{0, \beta}(X)(\gamma_1, \ldots, \gamma_m)
&:=\int_{[\overline{M}_{0, m}(X, \beta)]^{\rm{vir}}} \prod_{i=1}^m \mathrm{ev}_i^{\ast}(\gamma_i) \in \QQ, \\
\mathrm{GW}_{1, \beta}(X) &:= \int_{[\overline{M}_{1,0}(X, \beta)]^{\rm{vir}}}
1 \in \QQ,
\nonumber \end{align*}
where $\gamma_1, \ldots, \gamma_m \in H^*(X,\ZZ)$ and $\mathrm{ev}_i \colon \overline{M}_{0, m}(X, \beta)\to X$ is the evaluation map at the $i$th marked point. 
Klemm-Pandharipande defined genus 0 and 1 Gopakumar-Vafa type invariants 
\begin{align*}n_{0, \beta}(X)(\gamma_1, \ldots, \gamma_m), \quad n_{1,\beta}(X) \end{align*} 
by the following two ``multiple cover formulae''
\begin{align} 
\begin{split} \label{intro:n01}
& \sum_{\beta>0}\mathrm{GW}_{0, \beta}(X)(\gamma_1, \ldots, \gamma_m) \, q^{\beta}=\,
\sum_{\beta>0}n_{0, \beta}(X)(\gamma_1, \ldots, \gamma_m) \sum_{d=1}^{\infty}
d^{m-3}q^{d\beta}, \\
&\sum_{\beta>0}\mathrm{GW}_{1, \beta}(X)\,q^{\beta}=\,
\sum_{\beta>0} n_{1, \beta}(X) \sum_{d=1}^{\infty}
\frac{\sigma_1(d)}{d}q^{d\beta} \\
&\quad \quad \quad +\frac{1}{24}\sum_{\beta>0} n_{0, \beta}(X)(c_2(X))\log(1-q^{\beta}) -\frac{1}{24}\sum_{\beta_1, \beta_2}m_{\beta_1, \beta_2}
\log(1-q^{\beta_1+\beta_2}).
\end{split}
\end{align}
Here the sums are over all non-zero effective curve classes in $H_2(X)$ and $\sigma_1(d)=\sum_{i|d}i$. 
Moreover, $m_{\beta_1, \beta_2}\in\mathbb{Z}$ are so-called meeting invariants which are inductively determined by the genus 0 Gromov-Witten invariants of $X$.
Klemm-Pandharipande's integrality conjecture states that $n_{0, \beta}(X)(\gamma_1, \ldots, \gamma_m)$ and $n_{1,\beta}(X)$ are integers, which they verify in examples using virtual localization or mirror symmetry \cite{KP}.

\subsection{PT/GV correspondence} \label{PT/GV section}

Donaldson-Thomas theory of Calabi-Yau 4-folds was defined by Borisov-Joyce \cite{BJ} in general and Cao-Leung \cite{CL} in special cases.
In \cite{CMT2}, Cao-Maulik-Toda defined stable pair invariants of Calabi-Yau 4-folds, using the methods for constructing virtual classes of \cite{BJ}, and 
proposed an interpretation of \eqref{intro:n01} in terms of these invariants.

More precisely, denote by $P_n(X,\beta)$ the moduli space of stable pairs $\{ s:\oO_X\to F \}$ in the sense of Pandharipande-Thomas \cite{PT1}, where $F$ is a pure dimension 1 sheaf on $X$, $s$ is a section with 0-dimensional cokernel, and $\ch(F)=(0,0,0,\beta,n)$. 
This moduli space carries a virtual class  
\begin{equation} \label{vd}[P_n(X, \beta)]^{\rm{vir}}\in H_{2n}\big(P_n(X, \beta),\mathbb{Z}\big), \end{equation}
which depends on the choice of orientation of certain (real) line bundle on $P_n(X, \beta)$. On each connected component
of $P_n(X, \beta)$, there are two choices of orientations, which affects the corresponding contribution to the virtual class by a sign. Define 
\begin{align}\label{insertion for pt}
\tau \colon H^{*}(X,\ZZ)\to H^{*-2}(P_n(X,\beta),\ZZ), \quad
\tau(\gamma)=\pi_{P\ast}(\pi_X^{\ast}\gamma \cup\ch_3(\mathbb{F}) ),
\end{align}
where $\pi_X$ and $\pi_P$ are projections from $X \times P_n(X,\beta)$ to its factors and $\mathbb{I}^{\mdot}=\{\pi_X^*\oO_X\to \mathbb{F}\}$ is the universal stable pair on $X \times P_n(X,\beta)$. Since $\mathbb{F}$ is of pure dimension 1 on each fibre over $P_n(X,\beta)$, $\ch_3(\mathbb{F})$ is Poincar\'e dual to the class of the scheme-theoretic support $\cC$ of $\mathbb{F}$, which is a Cohen-Macaulay curve on each fibre over $P_n(X,\beta)$. 

Stable pair invariants (with primary insertions) of $X$ are defined by
\begin{align*}
P_{n,\beta}(X)(\gamma_1,\ldots,\gamma_m):=&\,\int_{[P_n(X,\beta)]^{\rm{vir}}} \prod_{i=1}^{m} \tau(\gamma_i) \in \ZZ, \\
P_{0,\beta}(X):=&\,\int_{[P_{0}(X,\beta)]^{\rm{vir}}}1 \in \mathbb{Z},   
\end{align*}
for all $\gamma_1, \ldots, \gamma_m \in H^{*}(X, \mathbb{Z})$. 
In the following conjectures, when the curve class $\beta$ is zero, we set 
$P_{0,0}(X):=1$, $P_{n,0}(X)(\gamma_1,\ldots,\gamma_m)=n_{0,0}(X)(\gamma_1,\ldots,\gamma_m):=0$ when $n>0$.

\begin{conj}\emph{(Conjecture 0.1 of \cite{CMT2} and Section 0.7 of \cite{CMT2})}\label{pair/GW g=0 conj intro}
Let $X$ be a Calabi-Yau 4-fold, $\beta \in H_2(X)$, $\gamma_1=\cdots=\gamma_n=\gamma \in H^4(X,\ZZ)$, and $n \in \ZZ_{>0}$.
Then there exists a choice of orientation such that
\begin{align*}
P_{n,\beta}(X)(\gamma_1,\ldots,\gamma_n)=\sum_{\begin{subarray}{c}\beta_0+\beta_1+\cdots+\beta_n=\beta  \\ \beta_0,\beta_1,\cdots,\beta_n\geqslant0 \end{subarray} }P_{0,\beta_0}(X)\cdot \prod_{i=1}^n n_{0,\beta_i}(X)(\gamma). \end{align*}
\end{conj}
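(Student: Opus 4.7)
The plan is to realize each insertion $\tau(\gamma)$ as an incidence condition on the support of a stable pair, and then to decompose the moduli space according to how the underlying Cohen-Macaulay curve splits relative to the incidence cycles.

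First I would represent $\gamma$ by a codimension-two algebraic cycle $Z\subset X$ and fix $n$ general translates $Z_1,\ldots,Z_n$. Since $\ch_3(\mathbb{F})$ is Poincar\'e dual to the scheme-theoretic support $\cC\subset X\times P_n(X,\beta)$ of $\mathbb{F}$, the class $\tau(\gamma_i)\in H^2(P_n(X,\beta),\ZZ)$ is represented, after a suitable deformation, by the cycle-theoretic class of the closed substack of pairs whose support meets $Z_i$. Establishing the appropriate virtual-intersection compatibility---the DT$_4$ analogue of the Poincar\'e-dual interpretation of single insertions used in \cite{CMT2}---should reduce $P_{n,\beta}(X)(\gamma,\ldots,\gamma)$ to an integral of $1$ against a Borisov-Joyce virtual class supported on the locus $P_n(X,\beta;Z_1,\ldots,Z_n)$ of pairs constrained simultaneously by all the $Z_i$.

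Second, I would stratify this cut locus by the topological type of the support. For generic $Z_i$, the dimensionally relevant strata are indexed by decompositions $\beta=\beta_0+\beta_1+\cdots+\beta_n$ with the support splitting as a disjoint union $\cC_0\sqcup\cC_1\sqcup\cdots\sqcup\cC_n$, where $\cC_i$ has class $\beta_i$, each $\cC_i$ ($i\geq 1$) meets only $Z_i$, and $\cC_0$ avoids all of them. Because a stable pair with disconnected support decomposes uniquely as a direct sum of pairs on the connected components, each stratum is (birationally) a product of a $P_0$-moduli of class $\beta_0$ with $n$ moduli of ``arm'' pairs of classes $\beta_i$ each carrying exactly one incidence constraint. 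The $\beta_0$-factor should contribute $P_{0,\beta_0}(X)$, as any higher holomorphic Euler characteristic would exceed the zero-dimensional budget left after the insertions are exhausted.

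Third, I would identify the contribution of a single arm of class $\beta_i$ with $n_{0,\beta_i}(X)(\gamma)$. This is the main enumerative input and is itself a one-insertion case of the PT/GV correspondence: the virtual count of stable pairs supported on a connected curve of class $\beta_i$ meeting one generic codimension-two cycle $Z_i$ should equal the genus-zero Gopakumar-Vafa invariant. For primitive $\beta_i$ this ought to follow from a direct identification with the enumeration of rational curves through $Z_i$ (made precise via \cite{KP}); for non-primitive $\beta_i$ one must show that the stable-pair contributions from $d$-fold covers and embedded-point configurations assemble into the integer $n_{0,\beta_i}(X)(\gamma)$ directly---note that, unlike the GW-side multiple-cover sum of \eqref{intro:n01} with its $d^{m-3}$ factors, the PT-side integer count already encodes the BPS number without such a rescaling. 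Combining with the $\beta_0$-factor and summing over decompositions of $\beta$ then yields the right-hand side.

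The principal obstacle is upgrading the set-theoretic stratification into a multiplicative identity of Borisov-Joyce classes: the DT$_4$ virtual class is constructed from a real-oriented structure rather than an honest perfect obstruction theory, so the standard excess-intersection or virtual-pullback machinery does not apply directly, and the strata generally have lower dimension than the ambient moduli. The most tractable route is to first establish the toric-equivariant version via the vertex formalism developed later in the paper---where torus fixed loci automatically provide the product decomposition---and then deform the compact case through a toric degeneration. The compatible choice of orientations demanded by the qualifier ``there exists a choice'' in the statement is the subtlest point; I expect it to follow from an orientation-propagation argument along the stratification in the style of \cite{CL}, ensuring that the signs produced by the factorization agree with those needed to identify each arm contribution with $n_{0,\beta_i}(X)(\gamma)$.
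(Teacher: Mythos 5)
The statement you are asked to prove is Conjecture \ref{pair/GW g=0 conj intro}, i.e.\ Conjecture 0.1 of \cite{CMT2}: it is an open conjecture, and the paper contains no proof of it. What the paper does contain is a heuristic derivation (Section \ref{heuristic argument}, adapted from \cite{CMT2}) valid only for an ``ideal'' Calabi--Yau 4-fold, on which all rational curves move in compact one-dimensional families of $(-1,-1,0)$-curves, all elliptic curves are super-rigid and isolated, and there are no higher-genus curves. Your proposal reproduces that heuristic almost exactly: representing each $\tau(\gamma)$ by an incidence cycle $Z_i$, splitting the support curve as $\cC_0\sqcup\cC_1\sqcup\cdots\sqcup\cC_n$ with $\cC_0$ elliptic of class $\beta_0$ and each arm $\cC_i$ a rational curve meeting only $Z_i$, and using the Euler-characteristic budget $\chi(\oO_{C_0})\geqslant 0$, $\chi(\oO_{C_i})\geqslant 1$ to kill embedded points and force $\chi(\oO_{C_i})=1$. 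This is the content of the product decomposition \eqref{Q_n:identity} in the paper (stated there for ideal sheaves rather than pairs).

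The genuine gap is the one you yourself flag in your final paragraph, and it is not a technicality that can be deferred: there is at present no mechanism for converting the set-theoretic stratification of the incidence locus into an identity of Borisov--Joyce virtual classes. The $\mathrm{DT}_4$ class is built from real orientations on the quadratic bundle $(\Ext^2,Q)$ rather than from a perfect obstruction theory, so virtual pullback, excess intersection, and degeneration formulas are all unavailable. Your two proposed repairs do not close this. First, the ``ideal geometry'' assumptions are genuinely false for a general compact Calabi--Yau 4-fold, so the stratification itself need not be dimensionally transverse. Second, the route through toric localization plus degeneration presupposes (i) a virtual localization formula for the compact $\mathrm{DT}_4$ class, which the paper only establishes in very special smooth local-surface cases (Theorem \ref{check loc formula}), and (ii) a $\mathrm{DT}_4$ degeneration formula, which does not exist. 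Third, identifying a single arm's contribution with $n_{0,\beta_i}(X)(\gamma)$ is itself the one-insertion case of the very conjecture being proved, so as stated your Step 3 is circular outside the ideal setting, where it holds by fiat because $n_{0,\beta_i}(\gamma)$ is then interpreted as the count of rational curves through $Z_i$. In short: your outline is the correct and standard heuristic, and it is useful as motivation, but it is not a proof, and the paper does not claim one either.
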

\begin{conj}\emph{(Conjecture 0.2 of \cite{CMT2})}\label{pair/GW g=1 conj intro}
Let $X$ be a Calabi-Yau 4-fold and $\beta \in H_2(X)$. Then there exists a choice of orientation such that
\begin{align*}
\sum_{\beta \geqslant 0}
 P_{0,\beta}(X) \, q^{\beta}=
\prod_{\beta>0} M\big(q ^{\beta}\big)^{n_{1, \beta}(X)},
\end{align*}
where $M(q)=\prod_{n=1}^{\infty} (1-q^{n})^{-n}$ denotes the MacMahon function.
\end{conj}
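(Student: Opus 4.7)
The plan is to mimic the strategy behind the genus-$1$ PT/GV correspondence on Calabi-Yau threefolds, adapted to the $\DT_4$ setting. The first reduction would be to use deformation invariance of $P_{0,\beta}(X)$ to reduce the global statement to a product of universal local contributions indexed by isolated embedded elliptic curves. The multiplicative form of the right-hand side, together with the expected factorization of $P_0(X,\beta)$ according to Cohen-Macaulay support type, strongly suggests such a decomposition, with cross-terms controlled by meeting-type corrections analogous to the $m_{\beta_1,\beta_2}$ in \eqref{intro:n01}.

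The heart of the argument would then be the local model $X_0=\mathrm{Tot}(L \oplus L^{-1}\to E)$, with $E$ an elliptic curve and $L \in \Pic^0(E)$ non-trivial, modelling a super-rigid embedded elliptic curve. One expects $n_{1,[E]}(X_0)=1$ and $n_{1,d[E]}(X_0)=0$ for $d \geqslant 2$, so the conjecture collapses to the single identity
\begin{align*}
\sum_{d \geqslant 0} P_{0, d[E]}(X_0)\, q^d \stackrel{?}{=} M(q).
\end{align*}
A stable pair $(\oO_{X_0}\to F)$ of Euler characteristic zero with class $d[E]$ corresponds to a Cohen-Macaulay curve of arithmetic genus one and multiplicity $d$ along $E$, which by the non-triviality of $L$ is scheme-theoretically supported on a formal neighborhood of $E$. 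The moduli should then identify with a relative $\Quot$ scheme parametrizing rank-$d$ torsion-free sheaves on these neighborhoods with trivial determinant and Euler characteristic zero. I would try to extract the MacMahon series by stratifying along the torsion length in the normal directions and reducing, fiber by fiber, to a $\Hilb^\bullet(\CC^3)$-type computation whose equivariant partition function at the Calabi-Yau slice is precisely $M(q)$; this would be the natural $\DT_4$ analogue of the $M(-q)^{\chi(X)}$ factor familiar from threefold DT theory.

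Globalization then requires two further steps. First, deformation invariance is used to reduce the general case to isolated super-rigid representatives. Second, contributions from non-isolated, non-reduced, or reducible elliptic configurations must be matched against the logarithmic and meeting-invariant corrections of \eqref{intro:n01}, where I expect the meeting invariants to play the same role as on the Gromov-Witten side. An independent source of evidence would come from the toric setting developed later in this paper: specializing the fully equivariant DT/PT vertex to the Calabi-Yau slice $\lambda_1+\lambda_2+\lambda_3+\lambda_4=0$ and matching against equivariant genus-$1$ Gromov-Witten computations on local toric Calabi-Yau 4-folds containing toric elliptic fibrations would test the conjecture in many nontrivial cases.

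The principal obstacle is the construction and sign-behaviour of the Borisov-Joyce virtual class on $P_0(X,\beta)$: choosing orientations consistently across connected components, and across all degenerations used in the reduction, is delicate, and a wrong choice could easily replace $M(q)$ by $M(-q)$ or a similar variant. A secondary difficulty is the absence of a direct PT/GW wall-crossing mechanism in dimension four, which on threefolds allows one to transport genus-$1$ identities through known GW/DT correspondences. A promising auxiliary tool is isotropic or cosection-type localization adapted to $\DT_4$ theory, which may linearize the virtual problem near isolated elliptic curves and reduce it to a topological Euler-characteristic computation in which the MacMahon generating identity becomes classical.
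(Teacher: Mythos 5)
This statement is a \emph{conjecture} (Conjecture 0.2 of \cite{CMT2}, restated here), not a theorem: the paper offers no proof of it, only computational evidence in special cases, so there is no argument in the paper to compare your proposal against. Your outline is a reasonable heuristic for why one might believe the formula, but as written it is not close to a proof, and several of its load-bearing steps are themselves open or incorrect.

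Concretely: (i) your local model $X_0=\mathrm{Tot}(L\oplus L^{-1}\to E)$ is a threefold, not a Calabi-Yau 4-fold; the relevant super-rigid local geometry here has normal bundle $L_1\oplus L_2\oplus L_3$ with $L_1\otimes L_2\otimes L_3\cong\oO_E$ (see the ``ideal'' assumptions in Section \ref{heuristic argument}), and the mechanism producing $M(q)$ is correspondingly different from the threefold $\Hilb^\bullet(\CC^3)$ computation you invoke. (ii) Deformation invariance of the Borisov--Joyce virtual class, which your first reduction relies on, is not established, and the class may a priori be non-algebraic; likewise no cosection or virtual localization technology for these classes is available in the generality you need (the paper only proves a localization formula in very special smooth cases, Theorem \ref{check loc formula}). (iii) The orientation problem you flag is not a ``secondary difficulty'' but the reason the statement is phrased as ``there exists a choice of orientation''; no mechanism is known for choosing orientations compatibly across components and degenerations. (iv) The role you assign to the meeting invariants $m_{\beta_1,\beta_2}$ is speculative: in \eqref{intro:n01} they enter the \emph{definition} of $n_{1,\beta}$ from Gromov--Witten data, and it is not clear they correspond to any geometric ``cross-terms'' on the stable-pairs side. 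If you want to contribute evidence rather than a proof, the concrete avenues consistent with this paper are the elliptic-fibration computations (Lemma \ref{lem elliptic fib} and the surrounding propositions) and the equivariant toric checks, not a global degeneration argument.
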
  
Conjecture \ref{pair/GW g=0 conj intro} can be interpreted as a wall-crossing formula in the derived category of the Calabi-Yau 4-fold \cite{CT1}. 
In \cite{CMT2, CKM}, several examples are computed to support both conjectures.

\subsection{DT/PT correspondence}

In this paper, we study curve counting invariants of $X$ using Hilbert schemes and conjecture an explicit DT/PT correspondence. As a consequence, we get an interpretation of the Gopakumar-Vafa type invariants $n_{0,\beta}(X)(\gamma_1,\ldots,\gamma_m)$ and $n_{1,\beta}(X)$ in terms of virtual counts of ideal sheaves of curves (closed 1-dimensional subschemes) on $X$. 

Let $I_n(X,\beta)$ denote the Hilbert scheme of closed subschemes $C \subseteq X$ of dimension $\leqslant 1$ 
such that $\ch(\oO_C)=(0,0,0,\beta,n)$. 
Similar to \cite{CMT2}, 
there exists a virtual class
\begin{equation*}
[I_n(X, \beta)]^{\rm{vir}}\in H_{2n}\big(I_n(X, \beta),\mathbb{Z}\big), 
\end{equation*}
in the sense of Borisov-Joyce \cite{BJ},
which depends on the choice of orientation of certain (real) line bundle on $I_n(X, \beta)$. When $\beta \neq 0$, as in the stable pairs case, we consider primary insertions
\begin{align*}
\tau \colon H^{*}(X,\ZZ)\to H^{*-2}(I_n(X,\beta),\ZZ), \quad
\tau(\gamma)=\pi_{I\ast}(\pi_X^{\ast}\gamma \cup\ch_3(\oO_{\zZ}) ),
\end{align*}
where $\pi_X$, $\pi_I$ are projections from $X \times I_n(X,\beta)$ to its factors, $\mathcal{Z}\subseteq X\times I_n(X,\beta)$ is the universal subscheme
and $\ch_3(\oO_{\mathcal{Z}})$ is Poincar\'e dual to the class of the maximal Cohen-Macaulay subscheme of $\mathcal{Z}$. We define curve counting invariants (with primary insertions) of $X$ by
\begin{align*}
I_{n,\beta}(X)(\gamma_1,\ldots,\gamma_m) :=&\, \int_{[I_n(X,\beta)]^{\rm{vir}}} \prod_{i=1}^{m} \tau(\gamma_i) \in \ZZ, \\
I_{0,\beta}(X):=&\, \int_{[I_{0}(X,\beta)]^{\rm{vir}}}1 \in \mathbb{Z}.  
\end{align*}

On Calabi-Yau 3-folds, curve counting and stable pair invariants are related by the DT/PT correspondence formulated in \cite{PT1} and proved in \cite{Bridgeland, Toda1} using wall-crossing and Hall algebra techniques. 
We conjecture an analogue on Calabi-Yau 4-folds.
\begin{conj}\emph{(Conjecture \ref{conj DT/PT})}\label{intro: conj DT/PT} 
Let $X$ be a smooth projective Calabi-Yau 4-fold, $\beta \in H_2(X)$, $\gamma_1, \ldots, \gamma_m \in H^*(X,\ZZ)$, and $n \in \ZZ$. 
Then there exists a choice of orientation such that
\begin{align*}(1) \,\, I_{0, \beta}(X)=P_{0, \beta}(X), \quad  (2) \,\, I_{n,\beta}(X)(\gamma_1,\ldots,\gamma_m)=P_{n,\beta}(X)(\gamma_1,\ldots,\gamma_m). \end{align*}
\end{conj}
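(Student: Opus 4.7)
The plan is to follow, as far as possible, the strategy that works for Calabi-Yau $3$-folds, where Bridgeland and Toda deduce the DT/PT correspondence from a wall-crossing in the motivic Hall algebra of $\Coh(X)$. On a Calabi-Yau $4$-fold the Borisov-Joyce virtual class is real-oriented and no Hall algebra is known to produce it, so a direct Hall-algebra wall-crossing is unavailable. This is the main obstacle, and it forces us to proceed by a mixture of geometric correspondences, torus localisation in the toric setting, and case-by-case checks globally.

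The geometric starting point is the natural correspondence induced by Cohen-Macaulayification. Every $C \in I_n(X,\beta)$ sits in an extension
\[ 0 \to \oO_{C^{\mathrm{CM}}} \to \oO_C \to Q \to 0, \]
with $C^{\mathrm{CM}}$ the maximal purely $1$-dimensional subscheme of $C$ and $Q$ zero-dimensional (embedded plus floating points). The assignment $C \mapsto (\oO_X \twoheadrightarrow \oO_{C^{\mathrm{CM}}})$ defines a morphism on the open locus $Z \subseteq I_n(X,\beta)$ where $Q = 0$, and in general a correspondence whose fibres parametrise how $Q$ is embedded in $C$. I would try to compare the Borisov-Joyce obstruction theories of $I_n(X,\beta)$ and $P_n(X,\beta)$ along this correspondence, so that the difference between the two virtual classes can be expressed in terms of pair theory together with Hilbert schemes of points. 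Note that the primary insertions $\tau(\gamma)$ on both sides are defined through the scheme-theoretic support, which is precisely $C^{\mathrm{CM}}$, so they pull back compatibly along the correspondence.

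For part (1) ($n=0$) the situation is the most rigid: any contribution from $Q \neq 0$ forces $\chi(\oO_{C^{\mathrm{CM}}}) < 0$, and one can try to show directly, $\beta$ by $\beta$, that these extra strata integrate to zero. For part (2) I would split the argument into two stages. First, reduce to toric $X$ by a degeneration argument, since the insertions are primary and both sides are conjecturally deformation invariant. Then, in the toric case, use $T$-equivariant localisation: the $T$-fixed loci of $I_n(X,\beta)$ and $P_n(X,\beta)$ are indexed by three-dimensional monomial ideals and labelled box configurations attached to the toric vertices, together with leg data along the edges, and both theories acquire a vertex formalism. The conjecture reduces to a combinatorial identity between equivariant DT and PT vertices on $\CC^4$, plus a matching identity along the edges.

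The hardest step is the vertex identity itself. In dimension four each fixed-point contribution depends on an orientation, so the conjecture really predicts a canonical sign rule making the generating series agree. I expect to verify the identity in low-box and one- or two-leg cases by direct equivariant calculation, providing strong evidence. A complete proof in the toric case will require a structural mechanism---a four-dimensional analogue of Hall algebra wall-crossing compatible with real virtual classes---which I view as the genuinely new ingredient currently missing. In parallel I would test the compact conjecture on examples where both sides are accessible, such as elliptic fibrations and suitable Calabi-Yau hypersurfaces in toric fourfolds, cross-referencing the output with the Gopakumar-Vafa predictions through Conjectures~\ref{pair/GW g=0 conj intro} and~\ref{pair/GW g=1 conj intro}.
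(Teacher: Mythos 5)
The statement you are addressing is a conjecture; the paper does not prove it, and offers instead a heuristic derivation plus partial verifications, which is essentially the programme you describe. Your Cohen--Macaulayification correspondence is exactly the torsion-filtration argument of the paper's heuristic section: there the authors posit an ``ideal'' Calabi--Yau 4-fold (rational curves moving in compact 1-dimensional families with normal bundle $\oO_{\PP^1}(-1)\oplus\oO_{\PP^1}(-1)\oplus\oO_{\PP^1}$, super-rigid elliptic curves, no reduced curves of genus $\geqslant 2$), under which the Euler-characteristic bounds $\chi(\oO_{C'})\geqslant 1$ (rational) and $\geqslant 0$ (elliptic) force $Q=0$ for $n=0$, and the primary insertions cut $I_n(X,\beta)$ down to the product decomposition matching Conjecture \ref{pair/GW g=0 conj intro}. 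Your observation that $\tau(\gamma)$ only sees the Cohen--Macaulay support is the same point the paper uses (equation \eqref{chmatch}) and is the reason no ``floating point'' correction appears, unlike in dimension three. Your toric vertex programme, including the low-box verifications and the sign ambiguity at each fixed point, is precisely Sections \ref{DTfix}--\ref{examplessec}.

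The one step in your plan that does not survive scrutiny is the proposed reduction of the compact case to the toric case ``by a degeneration argument, since \dots both sides are conjecturally deformation invariant.'' Deformation invariance of the Borisov--Joyce invariants is not available, no degeneration formula exists in this theory, and --- more fundamentally --- toric Calabi--Yau 4-folds are non-compact, so their DT/PT invariants are only defined as equivariant residues via localization on (possibly non-proper) fixed loci; they are not specializations of the compact invariants. The paper accordingly treats the compact and toric settings as parallel but logically independent bodies of evidence: the toric theorems (Theorems \ref{mainthm1}, \ref{mainthm2}) connect back to genuine virtual integrals only when $P_n(X,\beta)$ happens to be proper, and even then only conditionally on an unproven virtual localization formula \eqref{virloc1}--\eqref{virloc2} (established in Appendix \ref{app} in special smooth cases). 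Also, for part (1) in the compact examples the paper does not show that strata with $Q\neq 0$ ``integrate to zero''; it shows they are empty, by identifying $I_0(X,\beta)\cong P_0(X,\beta)$ as schemes using Harder--Narasimhan/Jordan--H\"older filtration arguments on elliptic fibrations and on products $Y\times E$. With the degeneration step removed and replaced by independent treatment of the two settings, your proposal coincides with the paper's.
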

In the Calabi-Yau 3-folds case, one requires corrections from ``floating points'' in the DT/PT correspondence. This is not needed in the case of Calabi-Yau 4-folds because the (``complex") virtual dimension of $I_n(X,\beta)$ and $P_n(X,\beta)$ is $n$ and primary insertions
are insensitive to floating points.
A detailed heuristic derivation for ``ideal Calabi-Yau 4-folds'', on which all curves deform in expected dimensions, is presented in Section \ref{heuristic argument}.  

The bulk of evidence in this paper comes from toric Calabi-Yau 4-folds discussed in the next section. For compact Calabi-Yau 4-folds we can check a few cases, where the moduli spaces $P_n(X,\beta)$ and $I_n(X,\beta)$ coincide (see also \cite{CMT1, CMT2}): 
\begin{itemize}
\item Suppose $\beta\in H_2(X)$ has the property that any element $C$ of the Chow group $\Chow_{\beta}(X)$ satisfies $\chi(\oO_C)=1$. Then Conjecture \ref{intro: conj DT/PT} holds for $n=0,1$. Examples include (1) irreducible curve classes on complete intersection Calabi-Yau 4-folds in products of projective spaces, (2) degree two classes on generic sextic 4-folds.
\item Suppose $X \rightarrow \PP^3$ is a Weierstrass elliptically fibred Calabi-Yau 4-fold and $\beta = r [F]$ is a multiple fibre class for some $r > 0$. Then Conjecture \ref{intro: conj DT/PT} holds for $n=0$.
\item Suppose $X = Y \times E$, where $Y$ is a smooth projective Calabi-Yau 3-fold, $E$ is an elliptic curve, and $\beta = r [E]$, where $[E]$ denotes the class of a  fibre $\{\mathrm{pt}\} \times E$ and $r>0$. Then Conjecture \ref{intro: conj DT/PT} holds for $n=0$.
\end{itemize}

\subsection{Equivariant DT/PT correspondence}

Let $X$ be a toric Calabi-Yau 4-fold\,\footnote{By this we mean a smooth quasi-projective toric 4-fold $X$ satisfying $K_X \cong \oO_X$ and $H^{>0}(\oO_X) = 0$.}. Denote by $(\CC^*)^4$ the dense open torus of $X$. Let $T \subseteq (\CC^*)^4$ be the 3-dimensional subtorus which preserves the Calabi-Yau volume form. Denote by $I_n(X,\beta)$ the Hilbert scheme of closed subschemes $Z \subseteq X$ with proper support, $[Z] = \beta$, and $\chi(\mathcal{O}_Z) = n$. Since $X$ is not compact, the Hilbert schemes $I_n(X,\beta)$ are in general non-compact as well, because ``floating points can move off to infinity''.  Nevertheless, the fixed locus
$$
I_n(X,\beta)^{T}=I_n(X,\beta)^{(\mathbb{C}^*)^4}
$$ 
consist of finitely many isolated reduced points (Lemma \ref{Tfixlocus} and Lemma \ref{Tfixlocus scheme}). 

For $\gamma_1, \ldots, \gamma_m \in H^*_T(X,\QQ)$, we define $T$-equivariant curve counting invariants of $X$ 
$$
I_{n,\beta}(X;T)(\gamma_1, \ldots, \gamma_m)\in \frac{\QQ(\lambda_1,\lambda_2,\lambda_3,\lambda_4)}{\langle \lambda_1+\lambda_2+\lambda_3+\lambda_4\rangle} \cong \QQ(\lambda_1,\lambda_2,\lambda_3)
$$
by using a localization formula (see Definition \ref{def of equi curve counting inv}). Here $\lambda_1, \ldots, \lambda_4$ are the equivariant parameters of $(\CC^*)^{4}$, which satisfy the relation $\lambda_1+\lambda_2+\lambda_3+\lambda_4 = 0$ on the Calabi-Yau torus $T$. This definition involves a sum over the elements of the fixed locus and depends on the choice of a sign at each torus fixed point. 
Therefore, we have $2^{\#I_n(X,\beta)^{T}}$ choices of signs in total. 
When $\beta=0$, $I_n(X,0)$ is the Hilbert scheme of $n$ points on $X$, 
which was studied by Nekrasov (with $K$-theoretic insertions \cite{Nekrasov}) and the authors (with tautological insertions \cite{CK}). 

Denote by $P_n(X,\beta)$ the moduli space of stable pairs $\{s : \mathcal{O}_X \rightarrow F\}$, where $F$ has proper support in class $\beta$ and $\chi(F) = n$. Contrary to $I_n(X,\beta)$, the moduli space $P_n(X,\beta)$ may not have 0-dimensional fixed locus. 
Nevertheless, there are many interesting cases for which $P_n(X,\beta)^{(\CC^*)^4}$ is 0-dimensional, for instance when $X$ is a local toric curve or surface. Then (by Proposition \ref{PTfixedlocus})
$$
P_n(X,\beta)^{T} = P_n(X,\beta)^{(\CC^*)^4},
$$
which consists of finitely many isolated reduced points. For $\gamma_1, \ldots, \gamma_m \in H^*_T(X)$, we may define $T$-equivariant stable pair invariants of $X$ by a localization formula as well 
(Definition \ref{def of equi pair inv})
$$
P_{n,\beta}(X;T)(\gamma_1, \ldots, \gamma_m)\in 
\QQ(\lambda_1,\lambda_2,\lambda_3).
$$
We form generating series
\begin{align*}
I_{\beta}(X;T)(\gamma_1, \ldots, \gamma_m):=&\, \sum_{n \in \ZZ} I_{n,\beta}(X;T)(\gamma_1, \ldots, \gamma_m)\, \, q^n  \in \QQ(\lambda_1,\lambda_2,\lambda_3)(\!(q)\!), \\
P_{\beta}(X;T)(\gamma_1, \ldots, \gamma_m):=&\, \sum_{n \in \ZZ} P_{n,\beta}(X;T)(\gamma_1, \ldots, \gamma_m)\, \, q^n  \in \QQ(\lambda_1,\lambda_2,\lambda_3)(\!(q)\!). \end{align*}
We denote the generating series without insertions by $I_{\beta}(X;T)$, $P_{\beta}(X;T)$ and set $I_{0,0}(X;T)=1$.

In Section \ref{vertex section}, we develop a 4-fold analogue of the vertex formalism introduced for toric 3-folds by Maulik-Nekrasov-Okounkov-Pandharipande \cite{MNOP}, Pandharipande-Thomas \cite{PT2}, and originating from physics \cite{AKMV}.
When $\beta=0$, this recovers the vertex formalism developed earlier by the authors in \cite{CK} and Nekrasov-Piazzalunga in \cite{NP}. As a result, we obtain the (fully equivariant) 4-fold DT and PT vertex
$$
\sfV_{\lambda\mu\nu\rho}^{\DT}(q), \, \sfV_{\lambda\mu\nu\rho}^{\PT}(q)  
\in \QQ(\lambda_1,\lambda_2,\lambda_3)(\!(q)\!),
$$
where $\lambda, \mu, \nu, \rho$ are finite plane partitions (i.e.~3-dimensional piles of boxes) and we only define the PT vertex when at most two of $\lambda, \mu, \nu, \rho$ are non-empty. Roughly speaking, these are the DT/PT partition functions on $X = \CC^4$ for which the underlying 
Cohen-Macaulay support curve is fixed and determined by plane partitions $\lambda, \mu, \nu, \rho$ along the coordinate axes of $\CC^4$. These generating functions depend on the choice of a sign for each $T$-fixed point.

\begin{rmk} \label{Nekrasovconj}
By a conjecture of Nekrasov (Conj.~\ref{nekrasov conj}), there exist choices of signs such that
\begin{align}\label{nek conj intro}
\mathsf{V}_{\varnothing\varnothing\varnothing\varnothing}^{\DT}(q) = \exp\Bigg(q \, \frac{(\lambda_1+\lambda_2)(\lambda_1+\lambda_3)(\lambda_2+\lambda_3)}{\lambda_1\lambda_2\lambda_3(\lambda_1+\lambda_2+\lambda_3)} \Bigg)\equiv \exp\bigg(-q\int_{\mathbb{C}^4} c_3^T(\mathbb{C}^4) \bigg),
\end{align}
where $\int_{\mathbb{C}^4}$ is the equivariant push-forward to a point defined by the Atiyah-Bott localization formula (see e.g. the proof of 
\cite[Theorem 3.13]{CK}).
In \cite{CK}, we conjectured that the choice of signs for which this equality holds is \emph{unique} and we provided computational evidence for this. 
\end{rmk}

The following is our DT/PT correspondence at the level of the 4-fold vertex:
\begin{conj}\emph{(Conjecture \ref{affine DT/PT conj})} \label{intro affine DT/PT conj}
For any finite plane partitions $\lambda, \mu, \nu, \rho$, at most two of which are non-empty, there are choices of signs such that
$$
\mathsf{V}_{\lambda\mu\nu\rho}^{\DT}(q) = \mathsf{V}_{\lambda\mu\nu\rho}^{\PT}(q) \, \mathsf{V}_{\varnothing\varnothing\varnothing\varnothing}^{\DT}(q).
$$
Suppose we choose the signs for $\mathsf{V}_{\varnothing\varnothing\varnothing\varnothing}^{\DT}(q)$ equal to the unique signs in Nekrasov's conjecture (Remark \ref{Nekrasovconj}). Then, at each order in $q$, the choice of signs for which LHS and RHS agree is unique up to an overall sign.
\end{conj}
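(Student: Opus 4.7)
The plan is to exploit equivariant localization together with the 4-fold vertex formalism of Section~\ref{vertex section}. Both $\mathsf{V}_{\lambda\mu\nu\rho}^{\DT}(q)$ and $\mathsf{V}_{\lambda\mu\nu\rho}^{\PT}(q)$ are defined as sums over isolated $T$-fixed points on $I_n(\CC^4,\beta)$, respectively $P_n(\CC^4,\beta)$, with prescribed asymptotics along the four coordinate axes, each contribution being (a square root of) an equivariant Euler class weighted by a sign. The key geometric input I would try to establish is a bijective decomposition
\[
\bigl\{T\text{-fixed DT configurations with legs }\lambda,\mu,\nu,\rho\bigr\}
\;\longleftrightarrow\;
\bigl\{T\text{-fixed PT configurations with legs }\lambda,\mu,\nu,\rho\bigr\}\times
\bigl\{T\text{-fixed finite subschemes of }\CC^4\bigr\},
\]
realized as follows: given a $T$-fixed subscheme $Z \subset \CC^4$ contributing to the DT vertex, take its maximal PT-admissible subscheme $Z^{\PT}$ (the Cohen-Macaulay support curve together with the embedded points concentrated on the three axes that are already forced by the prescribed legs) and let $F$ be the residual 0-dimensional piece supported away from the legs. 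This is the natural 4-fold analogue of the PT/DT stratification on 3-folds, and in the vertex setting it should reduce to a purely combinatorial bijection between solid-partition-type data.

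The second step is to show that under this bijection the $T$-character of the virtual tangent complex splits additively,
\[
\chi_T\bigl(T_Z^{\vir}\bigr) \;=\; \chi_T\bigl(T_{Z^{\PT}}^{\vir}\bigr) + \chi_T\bigl(T_F^{\vir}\bigr),
\]
so that, after taking the Borisov-Joyce square-root Euler class, the contributions multiply. Since $\chi_T(T^{\vir})$ on a Calabi-Yau 4-fold is self-dual up to a shift, this reduces to computing the mixed $\Ext^{\bullet}_{\CC^4}(I_{Z^{\PT}},I_F)$ (and its dual) in equivariant K-theory and checking that the resulting contribution to the character is a sum of a class and its Serre-dual, and therefore is absorbed by the square-root operation with a well-defined sign. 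The crucial cancellation is the Calabi-Yau analogue of the 3-fold $\Ext^1$/$\Ext^2$ cancellation used in \cite{PT2}, enriched by the extra $\Ext^3$ term.

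Given the character factorization, the third step is the sign analysis. At each fixed point on the RHS the square-root Euler class is determined up to $\pm 1$, and matching the LHS imposes exactly one linear condition per fixed point. Fixing the signs in $\mathsf{V}_{\varnothing\varnothing\varnothing\varnothing}^{\DT}(q)$ to be those of Nekrasov's conjecture (Remark \ref{Nekrasovconj}), the signs on the PT side are then determined order-by-order in $q$ up to a single overall sign at each fixed-point contribution, yielding the asserted uniqueness. I would verify this directly in low orders in $q$ and for small $\lambda,\mu,\nu,\rho$ (as the paper checks in several cases) to build confidence and to detect any sign subtleties.

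The hard part will be the second step, namely the mixed-$\Ext$ character decomposition in the presence of the square-root Euler class. In the 3-fold DT/PT correspondence this identity is subsumed by Hall-algebra wall-crossing in the derived category \cite{Bridgeland, Toda1}, but no analogous categorified machinery is presently available for the Borisov-Joyce virtual class on Calabi-Yau 4-folds; a direct equivariant $K$-theoretic computation seems to be the only avenue. A secondary but delicate difficulty is the bookkeeping of orientations: because the square-root is only defined up to a sign at each fixed point, the statement is inherently existential in the sign, and one must verify that a \emph{coherent} global choice of signs exists — this is what makes the conjecture non-trivial even once the character identity is established.
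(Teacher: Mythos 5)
The statement you are addressing is a \emph{conjecture} in the paper (Conjecture \ref{affine DT/PT conj}); the authors do not prove it. The only support offered is Proposition \ref{evidencethm}: a direct Maple verification of the identity modulo $q^N$ for all asymptotic data with $|\lambda|+|\mu|+|\nu|+|\rho|\leqslant 4$ and small $N$, together with a check of the uniqueness-of-signs statement in those cases. So there is no proof in the paper to compare against, and any complete argument you supplied would go strictly beyond it.

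That said, your proposed route contains a genuine gap at its core. Steps 1 and 2 posit a bijection between $T$-fixed DT configurations with legs $\lambda,\mu,\nu,\rho$ and pairs consisting of a PT configuration and a finite punctual subscheme, under which the $T$-character of the virtual tangent complex splits additively, so that the square-root Euler classes multiply term by term. No such bijection exists, already in the simplest example: for $\lambda$ a single box and $\mu=\nu=\rho=\varnothing$, the PT side has exactly one box configuration for each length of cokernel (boxes stacked along $\mathrm{I}^-$ in the $x_1$-direction), while the DT side at the same renormalized volume has many fixed points (the axis with embedded or floating boxes in various positions), and the counts do not match the product of the PT count with the number of point-like solid partitions. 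A $T$-fixed subscheme with embedded points \emph{on} the leg does not decompose canonically into a stable pair plus a residual 0-dimensional piece --- the PT fixed points are submodules of the quotient $M/\langle(1,1,1,1)\rangle$, not subschemes of $\CC^4$ --- and the vertex weights $\mathsf{V}^{\DT}_\pi$ do not factor as $\mathsf{V}^{\PT}_B+\mathsf{V}^{\DT}_{\pi'}$ for corresponding terms. The identity $\mathsf{V}_{\lambda\mu\nu\rho}^{\DT}=\mathsf{V}_{\lambda\mu\nu\rho}^{\PT}\,\mathsf{V}_{\varnothing\varnothing\varnothing\varnothing}^{\DT}$ is an identity of generating series after summing all signed square-root weights, which is precisely why, even in the 3-fold theory, the DT/PT correspondence is proved by Hall-algebra wall-crossing or degeneration rather than by a fixed-point-wise factorization. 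Consequently your Step 3 (one sign condition per fixed point, determined order by order) does not parse either: the two sides have different index sets with different cardinalities at each order of $q$, which is exactly what makes the uniqueness-up-to-overall-sign claim itself conjectural and forces the authors into brute-force verification. Your closing remarks correctly identify the absence of categorified wall-crossing for the Borisov--Joyce class as the real obstruction, but the proposal should be read as a heuristic whose central mechanism fails, not as a proof outline missing a single lemma.
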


\begin{rmk}
The formula of Remark \ref{Nekrasovconj} has a $K$-theoretic enhancement found by Nekrasov \cite{Nekrasov}. We discuss a $K$-theoretic enhancement of Conjecture \ref{intro affine DT/PT conj} in \cite{CKM1}.
\end{rmk}

The series $\mathsf{V}_{\lambda\mu\nu\rho}^{\DT}(q)$, $\mathsf{V}_{\lambda\mu\nu\rho}^{\PT}(q)$ are Laurent series. 
They have the same leading term $q^\ell$ for some $\ell \in \ZZ$ depending on $\lambda, \mu, \nu, \rho$. We define $\widetilde{\mathsf{V}}_{\lambda\mu\nu\rho}^{\DT}(q)= q^{-\ell} \mathsf{V}_{\lambda\mu\nu\rho}^{\DT}(q)$ and $\widetilde{\mathsf{V}}_{\lambda\mu\nu\rho}^{\PT}(q)= q^{- \ell}\mathsf{V}_{\lambda\mu\nu\rho}^{\PT}(q)$, which have leading term $q^0$ (though in general with coefficient $\neq 1$).
Using an implementation into Maple, we gathered the following evidence:
\begin{prop}\emph{(Proposition \ref{evidencethm})}\label{intro evidencethm} 
There are choices of signs such that 
$$\widetilde{\mathsf{V}}_{\lambda\mu\nu\rho}^{\DT}(q) = \widetilde{\mathsf{V}}_{\lambda\mu\nu\rho}^{\PT}(q) \, \mathsf{V}_{\varnothing\varnothing\varnothing\varnothing}^{\DT}(q) \mod q^N
$$
in the following cases:
\begin{itemize}
\item for any $|\lambda| + |\mu| + |\nu| + |\rho| \leqslant 1$ and $N=4$,
\item for any $|\lambda| + |\mu| + |\nu| + |\rho| \leqslant 2$ and $N=4$,
\item for any $|\lambda| + |\mu| + |\nu| + |\rho| \leqslant 3$ and $N=3$,
\item for any $|\lambda| + |\mu| + |\nu| + |\rho| \leqslant 4$ and $N=3$.
\end{itemize}
In each of these cases, the uniqueness statement of Conjecture \ref{intro affine DT/PT conj} holds.
\end{prop}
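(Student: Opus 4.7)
The plan is a direct computer-algebra verification based on the vertex formalism of Section \ref{vertex section}. For each quadruple $(\lambda,\mu,\nu,\rho)$ with $|\lambda|+|\mu|+|\nu|+|\rho|$ below the stated bound, I would enumerate the torus fixed points contributing to $\sfV^{\DT}_{\lambda\mu\nu\rho}(q)$ and $\sfV^{\PT}_{\lambda\mu\nu\rho}(q)$ up to order $q^N$, compute the square-root Euler classes of their virtual tangent spaces, form the truncated generating series, and compare the resulting coefficients as rational functions on the Calabi-Yau torus $T$.

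First I would parametrise the fixed loci concretely. A $T$-fixed ideal sheaf on $\mathbb{C}^4$ with asymptotic profile $(\lambda,\mu,\nu,\rho)$ along the four coordinate axes is the sum of the monomial ideals of the four infinite legs plus a finite ``excess'' solid partition supported off the legs, and enumerating such excess partitions of a given size is a standard but rapidly growing combinatorial task. On the PT side, a fixed stable pair $\oO_{\mathbb{C}^4}\to F$ with the prescribed leg profile is encoded by a labeled box configuration in the spirit of \cite{PT2} adapted to four dimensions; since at most two of $\lambda,\mu,\nu,\rho$ are non-empty, the underlying Cohen-Macaulay support is at most a union of two coordinate planes, which keeps the configurations manageable. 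At each fixed point $p$ I would extract the virtual tangent space $T^{\vir}_p$ as a formal difference of $T$-weights from the Koszul/\v{C}ech characters of the universal ideal or of $\mathbb{I}^{\mdot}$, and then impose the Calabi-Yau relation $\lambda_1+\lambda_2+\lambda_3+\lambda_4=0$ to land in a self-dual $T$-character whose square-root equivariant Euler class is defined.

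Next I would apply Borisov-Joyce localisation: the contribution of $p$ is a square-root Euler class carrying an undetermined sign $\epsilon_p \in \{\pm 1\}$. After fixing the signs in $\sfV^{\DT}_{\varnothing\varnothing\varnothing\varnothing}(q)$ to be the unique signs matching Nekrasov's formula (Remark \ref{Nekrasovconj}), the identity
$$
\widetilde{\sfV}^{\DT}_{\lambda\mu\nu\rho}(q) \equiv \widetilde{\sfV}^{\PT}_{\lambda\mu\nu\rho}(q)\,\sfV^{\DT}_{\varnothing\varnothing\varnothing\varnothing}(q) \bmod q^N
$$
becomes, coefficient by coefficient in $q$, a linear equation in the unknowns $\epsilon_p$ whose coefficients are rational functions in $\QQ(\lambda_1,\lambda_2,\lambda_3)$. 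A finite exhaustive search through $\{\pm 1\}^{\#\{p\}}$ implemented in Maple then decides both existence and uniqueness (up to an overall sign) of a sign assignment making the truncated identity hold, which is exactly the two-part claim of the proposition. To make the symbolic comparison tractable one can first specialise $(\lambda_1,\lambda_2,\lambda_3)$ to generic integer values in order to eliminate most candidate sign vectors, and only then verify the surviving candidates as rational function identities.

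The main obstacle is not conceptual but purely combinatorial and symbolic. The number of DT solid partitions, and especially of PT labeled box configurations with prescribed legs, grows very quickly with $|\lambda|+|\mu|+|\nu|+|\rho|$ and with $N$; each fixed-point contribution is a rational function in three variables that may have hundreds of terms before the Calabi-Yau cancellation takes place, so that both the enumeration and the symbolic arithmetic on $T$ become the true bottleneck. Additional care is needed when the $T$-fixed locus on $P_n(\mathbb{C}^4,\beta)$ acquires positive-dimensional components, a subtlety already visible for local toric surfaces, in which case the contribution must be integrated over the component using the formalism of Section \ref{vertex section} rather than evaluated at an isolated point. The ranges $|\lambda|+|\mu|+|\nu|+|\rho|\leqslant 4$ and $N\leqslant 4$ appearing in the proposition are precisely those for which the Maple implementation terminates in practical time and memory; pushing further would require more efficient parametrisations of the PT fixed loci rather than genuinely new ideas.
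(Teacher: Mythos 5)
Your proposal matches the paper's approach: the paper offers no written argument beyond ``by using an implementation into Maple, we verified the following cases,'' and the computation it refers to is exactly the one you describe --- enumerate the solid partitions and box configurations with the given leg profiles, form the vertex characters via the redistribution of Section \ref{vertex section}, take square-root Euler classes on the Calabi-Yau torus, and exhaust over sign vectors to establish existence and uniqueness up to overall sign. One small remark: within the stated range the PT fixed points are automatically isolated and reduced by Proposition \ref{PTfixedlocus} (positive-dimensional components only arise with three or four non-empty legs, which the PT vertex excludes), so the extra care you flag for non-isolated fixed loci is not needed here.
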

The last case involves a genuine asymptotic plane partition (i.e.~3D pile of boxes of height 2). \\

Conjecture \ref{intro affine DT/PT conj} implies the following (equivariant) DT/PT correspondence for toric Calabi-Yau 4-folds with primary insertions.
\begin{thm}\emph{(Theorem \ref{affine implies toric})}\label{intro affine implies toric}
Assume Conjecture \ref{intro affine DT/PT conj} holds. Let $X$ be a toric Calabi-Yau 4-fold, $\beta \in H_2(X)$ such that $P_n(X,\beta)^{(\CC^*)^4}$ is at most 0-dimensional for all $n \in \ZZ$, and let
$\gamma_1, \ldots, \gamma_m \in H^*_T(X)$. Then there exist choices of signs such that
\begin{align*}
\frac{I_{\beta}(X;T)(\gamma_1, \ldots, \gamma_m)}{I_{0}(X;T)}=P_{\beta}(X;T)(\gamma_1, \ldots, \gamma_m).
\end{align*}
In particular, 
without insertions we have
\begin{align*}
\frac{I_{\beta}(X;T)}{I_{0}(X;T)}= P_{\beta}(X;T).
\end{align*}
\end{thm}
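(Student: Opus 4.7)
The approach is to apply the vertex/edge decomposition developed in Section \ref{vertex section} to both generating series and then invoke Conjecture \ref{intro affine DT/PT conj} locally at every vertex of the toric polytope. Recall that each $T$-fixed point of $I_n(X,\beta)$, respectively $P_n(X,\beta)$, is specified by the data of, at each $(\CC^*)^4$-fixed point $v \in X^{(\CC^*)^4}$, a quadruple of plane partitions $\underline{\lambda}^{(v)} = (\lambda^{(v)}_1, \lambda^{(v)}_2, \lambda^{(v)}_3, \lambda^{(v)}_4)$ encoding the formal local structure along the four coordinate axes, subject to matching conditions along each $T$-invariant edge (1-dimensional torus orbit) of $X$. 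The assumption that $P_n(X,\beta)^{(\CC^*)^4}$ is 0-dimensional for every $n$ forces all edge partitions to be finite and, moreover, at each vertex $v$ at most two of the four entries of $\underline{\lambda}^{(v)}$ to be non-empty --- exactly the regime covered by Conjecture \ref{intro affine DT/PT conj}.

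Next, I would write down the localization/vertex formula
\begin{align*}
I_\beta(X;T)(\gamma_1,\ldots,\gamma_m) &= \sum_{\{\underline{\lambda}^{(v)}\}} \prod_{v} \sfV^{\DT}_{\underline{\lambda}^{(v)}}(q) \cdot \prod_{e} E_e(q) \cdot \Theta_{\vec{\gamma}}, \\
P_\beta(X;T)(\gamma_1,\ldots,\gamma_m) &= \sum_{\{\underline{\lambda}^{(v)}\}} \prod_{v} \sfV^{\PT}_{\underline{\lambda}^{(v)}}(q) \cdot \prod_{e} E_e(q) \cdot \Theta_{\vec{\gamma}},
\end{align*}
where the sum is over vertex labellings with total class $\beta$, $E_e(q)$ is the edge contribution, and $\Theta_{\vec{\gamma}}$ is the combined primary insertion factor. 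The crucial observation is that the edge factors and insertion factors are the \emph{same} in the two theories: the edge factor $E_e(q)$ is determined purely by the profile along $e$, and the insertion $\tau(\gamma) = \pi_*(\pi_X^*\gamma \cup \ch_3)$ reads off only the underlying Cohen-Macaulay support curve, which is encoded by the edge partitions together with the non-empty vertex partitions and coincides between the DT and PT fixed configurations.

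With this setup, I would apply Conjecture \ref{intro affine DT/PT conj} vertex by vertex, with the appropriate choice of signs at every vertex, to obtain
\[
\prod_v \sfV^{\DT}_{\underline{\lambda}^{(v)}}(q) = \Bigl( \prod_v \sfV^{\DT}_{\varnothing\varnothing\varnothing\varnothing}(q) \Bigr) \cdot \prod_v \sfV^{\PT}_{\underline{\lambda}^{(v)}}(q).
\]
The scalar $\prod_v \sfV^{\DT}_{\varnothing\varnothing\varnothing\varnothing}(q)$ is independent of the configuration and pulls out of the sum. For the class $\beta=0$ there are no edges, and the vertex formalism at each vertex reduces to the Hilbert scheme of points on $\CC^4$, so this scalar is exactly $I_0(X;T) = \prod_v \sfV^{\DT}_{\varnothing\varnothing\varnothing\varnothing}(q)$. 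Dividing through yields the desired identity.

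The main obstacle, as is typical for such DT/PT statements, is \emph{orientation bookkeeping}: Conjecture \ref{intro affine DT/PT conj} guarantees the vertex identity only for suitable local sign choices at each vertex, and Nekrasov's conjecture (Remark \ref{Nekrasovconj}) pins down the signs for the empty profile. One must check that the local signs required for the vertex identity at every $v$ assemble into coherent global orientations of $I_n(X,\beta)$ and $P_n(X,\beta)$, and that the Nekrasov signs used to evaluate $I_0(X;T)$ are simultaneously compatible with those appearing on the right-hand side. Once this sign management is in place, the theorem follows from the factorization argument above.
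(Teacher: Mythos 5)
Your proposal is correct and follows essentially the same route as the paper: localize both series into products of vertex, edge, and insertion factors, observe that $\mathsf{E}^{\DT}_{\lambda}=\mathsf{E}^{\PT}_{\lambda}$ and that the primary insertions depend only on the common Cohen-Macaulay support (so agree between the two theories), apply the vertex conjecture at each vertex (legitimate since $0$-dimensionality of $P_n(X,\beta)^{(\CC^*)^4}$ forces at most two non-empty asymptotic profiles per vertex), and pull out $\prod_v \mathsf{V}^{\DT}_{\varnothing\varnothing\varnothing\varnothing}=I_0(X;T)$. The sign issue you flag is handled painlessly because the equivariant invariants are defined with an independent sign choice at each fixed point, so the local vertex signs assemble freely.
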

Combining Proposition \ref{intro evidencethm} and Theorem \ref{intro affine implies toric}, we obtain verifications of the (equivariant) DT/PT correspondence for several local toric geometries (see Section \ref{examplessec} for details).

Although the Hilbert schemes $I_n(X,\beta)$ are rarely compact, there are many cases where the moduli space $P_n(X,\beta)$ is compact. Then the virtual class $[P_{n}(X,\beta)]^{\vir}$ of the previous section is defined. If $P_n(X,\beta)^{(\CC^*)^4}$ is also 0-dimensional,
the invariants of this section and the invariants of the previous section are expected\,\footnote{In Theorem \ref{check loc formula}, we prove the virtual localization formula in some special cases for local surfaces. The general case is currently still open. } to be related by a 4-fold virtual localization formula
\begin{equation} \label{virloc1}
P_{n,\beta}(X)(\gamma_1, \ldots, \gamma_m) = P_{n,\beta}(X;T)(\gamma_1, \ldots, \gamma_m) \in \ZZ
\end{equation}
for any $\gamma_1, \ldots, \gamma_m \in H^*(X,\ZZ)$ admitting $T$-equivariant lifts and satisfying $\sum_{i} \deg \tau(\gamma_i) = 2n$, and for appropriately chosen signs on the RHS. Similarly, for any $n<0$ we have $[P_{n}(X,\beta)]^{\vir} = 0$ (by \eqref{vd}) and therefore we expect that there exist choices of signs such that
\begin{equation} \label{virloc2}
P_{n,\beta}(X;T) = 0, \qquad \textrm{for all } n < 0.
\end{equation}
For the following toric Calabi-Yau 4-folds, $P_n(X,\beta)$ is compact and $P_n(X,\beta)^{(\CC^*)^4}$ is isolated for all $\beta \in H_2(X)$ and $n \in \ZZ$:
\begin{itemize}
\item $X = \mathrm{Tot}_{\PP^2}(\oO(-1) \oplus \oO(-2))$, i.e.~the total space of $\oO_{\PP^2}(-1) \oplus \oO_{\PP^2}(-2)$
\item $X =\mathrm{Tot}_{\PP^1 \times \PP^1}(\oO(-1,-1) \oplus \oO(-1,-1))$. 
\end{itemize}
Despite $X$ being non-compact in these cases, their PT moduli spaces are compact and the
following theorems could be seen as evidence for Conjecture \ref{intro: conj DT/PT}. The key point is that, by the
localization formulae (\ref{virloc1}) and (\ref{virloc2}), the only nonzero terms contributing in Theorem \ref{intro affine implies toric}
are those for which the degree of the insertion matches the degree of the virtual class.
\begin{thm} \emph{(Theorem \ref{mainthm1})} 
Let $X$ be a toric Calabi-Yau 4-fold and $\beta \in H_2(X)$. Suppose $P_n(X, \beta)$ is proper and $P_n(X, \beta)^{(\CC^*)^4}$ is at most 0-dimensional for all $n \in \ZZ$. Assume the following:
\begin{enumerate} 
\item the DT/PT vertex correspondence (Conjecture \ref{intro affine DT/PT conj}) holds,
\item \eqref{virloc1} holds for $\beta$ and $n=0$, and \eqref{virloc2} holds for $\beta$ and all $n < 0$, 
\item the signs of (1) and (2) can be chosen compatibly. 
\end{enumerate}
Then
$
I_{0,\beta}(X;T)=P_{0,\beta}(X) \in \ZZ.
$
\end{thm}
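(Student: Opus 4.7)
The plan is to extract the result from the generating-series identity of Theorem \ref{intro affine implies toric} by taking the $q^{0}$-coefficient, after using hypothesis (2) to control both the ``denominator'' $I_{0}(X;T)$ and the PT side in low degrees.

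First, I would apply Theorem \ref{intro affine implies toric}: under the DT/PT vertex correspondence (1), and with the compatible signs afforded by (3), it yields
\begin{align*}
I_{\beta}(X;T) \;=\; I_{0}(X;T)\cdot P_{\beta}(X;T)
\end{align*}
in $\QQ(\lambda_1,\lambda_2,\lambda_3)(\!(q)\!)$. The rest of the argument amounts to extracting the constant coefficient in $q$ from this identity.

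Second, I would analyse the low-order $q$-behaviour on both sides. Since $I_{n}(X,0)$ is the Hilbert scheme of $n$ points on $X$ (empty for $n<0$) and $I_{0,0}(X;T)=1$ by convention, one has $I_{0}(X;T) = 1 + O(q)$. On the other hand, the vanishing half of assumption (2) forces $P_{n,\beta}(X;T)=0$ for all $n<0$, so $P_{\beta}(X;T)$ has no negative powers of $q$. The convolution
\begin{align*}
[q^{0}]\bigl( I_{0}(X;T)\cdot P_{\beta}(X;T) \bigr) \;=\; \sum_{k+\ell = 0} I_{k,0}(X;T)\, P_{\ell,\beta}(X;T)
\end{align*}
therefore collapses to the single term with $k=\ell=0$, giving
\begin{align*}
I_{0,\beta}(X;T) \;=\; P_{0,\beta}(X;T).
\end{align*}
Finally, I would invoke the virtual localization half of assumption (2) to identify $P_{0,\beta}(X;T)$ with the honest integer invariant $P_{0,\beta}(X)$, simultaneously producing the identity and its integrality.

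The only genuine subtlety in this plan is bookkeeping around signs: Theorem \ref{intro affine implies toric} holds only for suitably chosen signs at each torus-fixed point of $P_{n}(X,\beta)^{T}$ and $I_{n}(X,\beta)^{T}$, while the virtual localization identity \eqref{virloc1} and the vanishing \eqref{virloc2} each impose their own sign choices on the PT side. Hypothesis (3) is precisely the statement that these can be made compatibly; with this in hand, the argument above is purely formal manipulation of generating series, so no further obstacle is expected.
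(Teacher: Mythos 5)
Your proposal is correct and follows essentially the same route as the paper: extract the $q^0$-coefficient of the identity $I_{\beta}(X;T)=I_{0}(X;T)\cdot P_{\beta}(X;T)$ from Theorem \ref{affine implies toric}, kill the terms $I_{k,0}(X;T)\,P_{-k,\beta}(X;T)$ with $k>0$ using \eqref{virloc2}, and then identify $P_{0,\beta}(X;T)$ with $P_{0,\beta}(X)$ via \eqref{virloc1}. The sign bookkeeping is handled exactly as you describe, by hypothesis (3).
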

\begin{thm} \emph{(Theorem \ref{mainthm2})}
Let $X$ be a toric Calabi-Yau 4-fold satisfying $\int_X c_3^T(X)=0$ and let $\beta \in H_2(X)$. Let $\gamma_1, \ldots, \gamma_m \in H^*(X,\ZZ)$ admitting $T$-equivariant lifts and satisfying $\sum_i \deg \tau(\gamma_i) = 2n > 0$. Suppose $P_\chi(X, \beta)$ is proper and $P_\chi(X, \beta)^{(\CC^*)^4}$ is at most 0-dimensional for all $\chi \in \ZZ$. Assume the following:
\begin{enumerate} 
\item the DT/PT vertex correspondence (Conjecture \ref{intro affine DT/PT conj}) holds,
\item Nekrasov's conjecture (Remark \ref{Nekrasovconj}) holds,
\item \eqref{virloc1} holds for $\beta$, $\gamma_1, \ldots, \gamma_m$, $n$, 
\item the signs of (1), (2), and (3) can be chosen compatibly. 
\end{enumerate}
Then 
$
I_{n,\beta}(X;T)(\gamma_1, \ldots, \gamma_m)=P_{n,\beta}(X)(\gamma_1, \ldots, \gamma_m) \in \ZZ.
$
\end{thm}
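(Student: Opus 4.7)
The plan is to chain together the three assumed results—the DT/PT vertex correspondence, Nekrasov's conjecture, and the virtual localization identity—so that each addresses one step in converting the $T$-equivariant curve counting series on the left-hand side into the ordinary stable pair invariant on the right-hand side. Concretely, I would first invoke Theorem \ref{intro affine implies toric} (which is exactly the toric reduction of the affine DT/PT vertex correspondence) to get, under assumption (1) and for a compatible choice of signs, the generating-series identity
\begin{equation*}
I_{\beta}(X;T)(\gamma_1, \ldots, \gamma_m) = I_{0}(X;T) \cdot P_{\beta}(X;T)(\gamma_1, \ldots, \gamma_m).
\end{equation*}

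The second step is to use Nekrasov's conjecture (assumption (2)) to eliminate the factor $I_{0}(X;T)$. The vertex formalism expresses $I_{0}(X;T)$ as a product over the torus-fixed points $p \in X^T$ of the empty vertex $\mathsf{V}_{\varnothing\varnothing\varnothing\varnothing}^{\DT}(q)$ evaluated with the local $T$-weights at $p$. Combined with the form of $\mathsf{V}_{\varnothing\varnothing\varnothing\varnothing}^{\DT}(q)$ in Remark \ref{Nekrasovconj} and Atiyah--Bott localization, this yields
\begin{equation*}
I_{0}(X;T) = \exp\!\Bigl(-q \int_X c_3^T(X)\Bigr),
\end{equation*}
and the numerical hypothesis $\int_X c_3^T(X) = 0$ then collapses this to $I_{0}(X;T) = 1$. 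So, combining with the first step, $I_{\beta}(X;T)(\gamma_1,\ldots,\gamma_m) = P_{\beta}(X;T)(\gamma_1,\ldots,\gamma_m)$.

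The third step is to extract the coefficient of $q^n$ on both sides, obtaining
\begin{equation*}
I_{n,\beta}(X;T)(\gamma_1, \ldots, \gamma_m) = P_{n,\beta}(X;T)(\gamma_1, \ldots, \gamma_m),
\end{equation*}
and then to convert the right-hand side into the non-equivariant invariant via the virtual localization formula \eqref{virloc1}, which holds by assumption (3) for exactly the data $(\beta, \gamma_1, \ldots, \gamma_m, n)$ appearing in the theorem. This produces the asserted equality with $P_{n,\beta}(X)(\gamma_1, \ldots, \gamma_m) \in \ZZ$, where integrality is a direct consequence of the construction of $P_{n,\beta}(X)(\gamma_1, \ldots, \gamma_m)$ via the Borisov--Joyce virtual class.

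The main obstacle, and the reason the statement is phrased as a conditional theorem, is the delicate bookkeeping of orientations: each of assumptions (1), (2), (3) only holds for certain choices of signs at the various $T$-fixed points contributing to the two theories, and the chain of equalities above only makes sense if these choices can be matched simultaneously across all three inputs. That matching is precisely what assumption (4) assumes, so the proof itself does not need to argue it; the real content is that, once compatibility is granted, no other sign ambiguities arise in passing from the vertex-level identity to the global insertion identity, and the degree-counting hypothesis $\sum_i \deg \tau(\gamma_i) = 2n$ guarantees that the $q^n$ coefficient extraction is the correct truncation for the comparison with the non-equivariant integral.
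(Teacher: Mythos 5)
Your proposal is correct and follows essentially the same route as the paper: apply Theorem \ref{affine implies toric}, use Nekrasov's conjecture together with $\int_X c_3^T(X)=0$ to reduce $I_0(X;T)$ to $1$, extract the $q^n$ coefficient, and convert via the virtual localization formula \eqref{virloc1}. The only cosmetic difference is that the paper writes the coefficient extraction as the convolution $\sum_{k}I_{k,0}(X;T)\,P_{n-k,\beta}(X;T)(\gamma_1,\ldots,\gamma_m)$ before killing the $k>0$ terms, whereas you cancel $I_0(X;T)$ at the level of generating series first; these are the same argument.
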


\begin{rmk}
The spaces $P_n(X, \beta)$ are compact and the fixed loci $P_n(X, \beta)^{(\CC^*)^4}$ are at most 0-dimensional for all $n \in \ZZ$ when $X= \mathrm{Tot}_{\PP^2}(\oO(-1) \oplus \oO(-2))$ or $\mathrm{Tot}_{\PP^1 \times \PP^1}(\oO(-1,-1) \oplus \oO(-1,-1)$. In the second case, the equivariant integral $\int_X c_3^T(X)$ is zero by a direct computation. In Section \ref{examplessec}, we discuss these two local surfaces and the local curve $\mathrm{Tot}_{\PP^1}(\oO \oplus \oO(-1) \oplus \oO(-1))$ in more detail. In the range where we checked Conjecture \ref{intro affine DT/PT conj} (Proposition \ref{intro evidencethm}) and Nekrasov's formula (Remark \ref{Nekrasovconj}), 
the previous theorems are only conditional on the virtual localization formulae \eqref{virloc1}, \eqref{virloc2} (and compatibility of signs).
In an appendix, we also prove the virtual localization formula in some cases. 
\end{rmk}

\subsection{DT/PT generating series of a local curve}

Let $X=\mathrm{Tot}_{\PP^1}(\oO \oplus \oO(-1) \oplus \oO(-1))$. Then $H_2(X) \cong \ZZ$ is freely generated by the class of the zero section. For any $d \geqslant 1$ and $n \in \ZZ$, we consider (equivariant) stable pair invariant without insertions
\begin{align*}P_{n,d}(X; T)\in \QQ(\lambda_1,\lambda_2,\lambda_3). \end{align*}
Motivated by \cite{CMT2}, we conjecture the following expression for the generating series:
\begin{conj}\emph{(Conjecture \ref{pair inv gene series})}\label{intro pair inv gene series}
For $X=\mathrm{Tot}_{\PP^1}(\oO \oplus \oO(-1) \oplus \oO(-1))$, there exist choices of signs such that the following equation holds
\begin{align*}\sum_{n,d\geqslant0}P_{n,d}(X; T)\,q^n y^d=\exp\Big(\frac{q y}{\lambda_2}\Big), \end{align*}
where $\lambda_2$ is the equivariant parameter for the $\CC^*$-action on the first fibre $\oO_{\mathbb{P}^1}$ and $P_{0,0}(X; T):=1$.
\end{conj}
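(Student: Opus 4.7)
The plan is to apply the equivariant $\PT$ vertex formalism of Section \ref{vertex section}. The toric skeleton of $X$ consists of the zero section $\PP^1$ with its two $T$-fixed points $0,\infty$ joined by a single compact edge. For each $d$ the generating series therefore decomposes as
$$\sum_{n\in\ZZ} P_{n,d}(X;T)\, q^n \;=\; \sum_{|\lambda|=d} \mathsf{E}_{\lambda}\cdot \mathsf{V}^{\PT}_{\lambda\varnothing\varnothing\varnothing}(q)\big|_{0}\cdot \mathsf{V}^{\PT}_{\lambda\varnothing\varnothing\varnothing}(q)\big|_{\infty},$$
where $\lambda$ is the asymptotic 2D partition of the underlying Cohen--Macaulay support along the $\PP^1$-axis, the two $\PT$ vertex factors are specialized at the tangent weights of the respective charts, and $\mathsf{E}_\lambda$ is the edge term coming from the normal bundle $\oO\oplus\oO(-1)\oplus\oO(-1)$ together with the Borisov--Joyce square root. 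Choose coordinates so that $\lambda_1$ is the tangent weight at $0$ along $\PP^1$, $\lambda_2$ is the weight of the trivial normal summand $\oO$, and $\lambda_3,\lambda_4$ the weights of the two $\oO(-1)$ summands; at $\infty$ the normal weights become $\lambda_2, \lambda_3+\lambda_1, \lambda_4+\lambda_1$, subject to $\lambda_1+\lambda_2+\lambda_3+\lambda_4=0$.

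Next, replace each $\mathsf{V}^{\PT}_{\lambda\varnothing\varnothing\varnothing}(q)$ by $\mathsf{V}^{\DT}_{\lambda\varnothing\varnothing\varnothing}(q)/\mathsf{V}^{\DT}_{\varnothing\varnothing\varnothing\varnothing}(q)$ using Conjecture \ref{intro affine DT/PT conj}. The $T$-fixed points on the $\DT$ side are monomial ideals in $\CC[x_1,x_2,x_3,x_4]$ with fixed asymptotic $\lambda$-shape along the $x_1$-axis, and their virtual weights are written down from the square root of the equivariant $K$-theoretic obstruction class. I would then split, at each vertex, the boxes lying in the trivial $\lambda_2$-direction from those extending in the $\lambda_3,\lambda_4$-directions, and show that the latter cancel completely against $\mathsf{E}_\lambda$ and against the two copies of the Nekrasov denominator $\mathsf{V}^{\DT}_{\varnothing\varnothing\varnothing\varnothing}(q)$ (Remark \ref{Nekrasovconj}) for every $\lambda$. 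The remaining $\lambda_2$-boxes contribute independently a factor $q/\lambda_2$ each, with a $1/k!$ from symmetrization of the $k$ unordered boxes moving freely along the globally trivial $\oO$-fibre, so that the total sum reorganizes as $\sum_{k\geqslant 0}(qy)^k/(k!\lambda_2^k)=\exp(qy/\lambda_2)$.

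The main obstacle is exactly the cancellation in the middle step: the product of vertex and edge contributions in the $\lambda_3,\lambda_4$-variables must collapse to $1$ against the two copies of the Nekrasov denominator, for every $\lambda$ and every vertex configuration. This requires a subtle equivariant $K$-theoretic identity, together with a globally consistent choice of Borisov--Joyce orientations at the two vertices so that the cancellation holds as a genuine equality rather than merely up to sign. A practical route is to first verify the collapse in low degree using the evidence of Proposition \ref{intro evidencethm}, then seek a uniform argument --- either by explicitly identifying the surviving $T$-fixed pairs with unordered tuples of points along the trivial normal $\CC_{\lambda_2}$-line (so the relevant moduli is a symmetric product whose equivariant integral manifestly yields $\tfrac{1}{k!\lambda_2^k}$), or by a vanishing argument for the square-root virtual structure in the $\lambda_3,\lambda_4$-directions that bypasses the full combinatorial sum.
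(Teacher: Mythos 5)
There is a genuine gap, and it is the one you yourself flag: the ``collapse'' of the $\lambda_3,\lambda_4$-direction contributions against $\mathsf{E}_\lambda$ and the two Nekrasov denominators is asserted, not proved, and it is precisely the content of the conjecture (it amounts to showing $P_{n,d}(X;T)=0$ for all $n>d$ and all $\lambda$ with boxes outside the trivial fibre direction). Without that identity your argument proves nothing. Moreover, your route is doubly conditional: it invokes Conjecture \ref{affine DT/PT conj} (and, through the denominators, Conjecture \ref{nekrasov conj}), both of which are open. Note that the paper's own logic runs in the opposite direction --- the corollary at the end of Section 2 \emph{assumes} Conjecture \ref{pair inv gene series} together with the DT/PT vertex correspondence to deduce the DT generating series --- so reducing PT to DT here does not simplify the problem; the DT side has strictly more fixed points (embedded points everywhere on $\CC^4$, not just on the curve) and the required cancellation there is no more tractable.

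For comparison, the paper does not prove the conjecture either; it establishes only Proposition \ref{verify pair inv gene series}, and the proven part is obtained by a much more elementary and unconditional argument that your proposal misses. Namely, Lemma \ref{chi} with normal weights $(0,-1,-1)$ shows every box of the asymptotic plane partition contributes at least $1$ to $\chi$, so $P_n(X,d)^{(\CC^*)^4}=\varnothing$ for $n<d$, and for $n=d$ the fixed locus is a \emph{single} point: the $d$-fold thickening of the zero section in the trivial $\oO$-direction with no cokernel boxes. For that point the redistributed character computes directly to $\mathsf{F}=\sum_{i=1}^{n}(t_2^{i}+t_2^{-i})$, whose signed square-root Euler class is $\pm 1/(n!\,\lambda_2^{n})$ --- this is where the $1/k!$ actually comes from, as an explicit product of tangent weights $\prod_{i=1}^n(i\lambda_2)$, not from a symmetrization of freely moving points (the thickening is infinitesimal and rigid, and each layer raises $d$ and $n$ simultaneously). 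The cases $n>d$ are then only checked by machine in low degree. If you want to push beyond the paper, the place to concentrate is a direct vanishing argument for $P_{n,d}(X;T)$ with $n>d$ on the PT side (where the fixed loci are small box configurations over the thickened $\PP^1$), rather than a detour through the DT vertex. Also, a small bookkeeping point: your edge decomposition omits the prefactor $q^{f_{0,-1,-1}(\lambda)}$ from Lemma \ref{chi} that normalizes the powers of $q$ between the two vertex series.
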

By using the vertex formalism and Maple calculations, we obtain the following verifications:
\begin{prop}\emph{(Proposition \ref{verify pair inv gene series})}
Conjecture \ref{intro pair inv gene series} holds in the following cases: 
\begin{itemize}
\item for any $n \leqslant d$, 
\item $d=1$ and modulo $q^6$,
\item $d=2$ and modulo $q^6$,
\item $d=3$ and modulo $q^6$,
\item $d=4$ and modulo $q^7$.
\end{itemize}
\end{prop}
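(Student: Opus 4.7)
The strategy is to apply the 4-fold PT vertex formalism from Section~\ref{vertex section} to express $P_{n,d}(X;T)$, for $X = \mathrm{Tot}_{\PP^1}(\oO \oplus \oO(-1) \oplus \oO(-1))$, as a localization sum over $T$-fixed stable pairs. The torus $T$ has two fixed points $0,\infty$ on the zero section $\PP^1$, and each $T$-fixed stable pair is specified by a 3D (plane) partition $\pi$ with $|\pi|=d$ (describing the Cohen-Macaulay thickening of $\PP^1$ in the three normal directions) together with PT box decorations $\sigma_0, \sigma_\infty$ at the two vertices. This yields a formula of the shape
\begin{align*}
\sum_n P_{n,d}(X;T)\, q^n = \sum_{|\pi|=d} q^{\chi(\oO_{C_\pi})}\, \sfV_{\pi\varnothing\varnothing\varnothing}^{\PT}(q)\, \sfV_{\pi^{t}\varnothing\varnothing\varnothing}^{\PT}(q^{-1})\, E^X_\pi,
\end{align*}
with $E^X_\pi$ the edge contribution for the normal bundle. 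The $\oO_{\PP^1}$-module decomposition $\oO_{C_\pi} \cong \bigoplus_{(i,j,k)\in\pi}\oO_{\PP^1}(-j-k)$ gives $\chi(\oO_{C_\pi}) = d - \sum_{(i,j,k)\in\pi}(j+k) \leqslant d$, with equality attained only by the ``horizontal'' partition $\pi^{*} = \{(i,0,0) : 0 \leqslant i < d\}$ lying in the trivial $\oO$-direction of equivariant weight $\lambda_2$.

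For the case $n \leqslant d$, I would proceed in two steps. First, evaluate $E^X_{\pi^{*}}$ directly: since all $d$ boxes of $\pi^{*}$ lie in the $\lambda_2$-direction, the half-Euler-class formalism reduces to a one-variable equivariant residue in $\lambda_2$ and yields $1/(d!\,\lambda_2^d)$, matching the coefficient of $y^d q^d$ in $\exp(qy/\lambda_2)$. Second, show that all other fixed-point contributions cancel at the relevant $q$-orders ($q^n$ for each $n \leqslant d$): for $n<d$ these come from $\pi \neq \pi^{*}$ together with nontrivial decorations $\sigma_0,\sigma_\infty$ adjusting $\chi(F)$ down to $n$, while at $n=d$ the cancellation happens among the analogous ``off-diagonal'' $(\pi,\sigma_0,\sigma_\infty)$ with $\pi \neq \pi^{*}$. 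This cancellation is the nontrivial content of the $n \leqslant d$ case, and should follow from an equivariant weight and degree analysis of the vertex/edge expressions, combined with the orientation signs prescribed by the vertex formalism.

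For the small-$d$ cases I would implement the vertex formula in Maple: enumerate the 3D partitions of size $d$ (numbering $1, 3, 6, 13$ for $d = 1,2,3,4$), compute each vertex and edge factor symbolically in $\lambda_1,\lambda_2,\lambda_3$, truncate the resulting $q$-series to the stated order, and compare with the Taylor expansion of $\exp(qy/\lambda_2)$. The main obstacle throughout is the choice of orientation signs: the vertex formalism carries one sign per $T$-fixed stable pair, and only particular consistent choices reproduce the conjectured closed form. Concretely, the hard work is (a) in the $n \leqslant d$ case, establishing the cancellation among $\pi \neq \pi^{*}$ contributions and pinning down the sign that makes $E^X_{\pi^{*}} = +1/(d!\,\lambda_2^d)$; (b) in the small-$d$ checks, maintaining a consistent global sign choice and managing the rapid growth of the symbolic expressions in three equivariant variables, which is what forces the practical cutoffs $q^6$ for $d \leqslant 3$ and $q^7$ for $d = 4$.
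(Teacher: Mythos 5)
There is a genuine gap here, and it originates in a sign error. For $X=\mathrm{Tot}_{\PP^1}(\oO\oplus\oO(-1)\oplus\oO(-1))$ the structure sheaf of a thickening of the zero section is filtered by symmetric powers of the \emph{conormal} bundle $N^\vee\cong\oO\oplus\oO(1)\oplus\oO(1)$, so the box at position $(i,j,k)$ contributes $\oO_{\PP^1}(j+k)$ with $\chi=1+j+k\geqslant 1$, not $\oO_{\PP^1}(-j-k)$. Hence $\chi(\oO_{C_\pi})=d+\sum_{(i,j,k)\in\pi}(j+k)\geqslant d$, which is the \emph{opposite} of your inequality $\chi(\oO_{C_\pi})\leqslant d$; this is precisely Lemma \ref{chi} specialized to $(m,m',m'')=(0,-1,-1)$. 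Since any stable pair satisfies $\chi(F)=\chi(\oO_C)+\mathrm{length}(Q)\geqslant\chi(\oO_C)$, it follows that $P_n(X,d)^{(\CC^*)^4}$ is \emph{empty} for $n<d$, and for $n=d$ it consists of exactly one point: the pure Cohen--Macaulay $d$-fold thickening in the trivial $\oO$-direction with no cokernel. That is how the paper handles the case $n\leqslant d$ --- the vanishing for $n<d$ is an emptiness statement, and the $n=d$ value is a single explicit local computation. Your route instead requires a cancellation among the contributions of all $(\pi,\sigma_0,\sigma_\infty)$ with $\pi\neq\pi^{*}$, which you do not carry out but only assert ``should follow from an equivariant weight and degree analysis''; as written this unproved cancellation is the entire content of the claim, and with the correct inequality those competing fixed points do not exist in the first place, so the cancellation you set out to prove is vacuous rather than the ``nontrivial content'' of the case.

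The remaining ingredients of your plan are essentially sound and consistent with the paper: the unique $n=d$ contribution is indeed evaluated by a one-variable computation in $\lambda_2$ (the paper computes the redistributed character to be $\sum_{i=1}^{d}(t_2^{i}+t_2^{-i})$, whose signed half Euler class gives $\pm 1/(d!\,\lambda_2^{d})$), and the finitely many cases $d\leqslant 4$ to the stated $q$-orders are, as you propose, checked by a Maple implementation of the vertex formalism. One further caution: your displayed factorization with a factor $\sfV^{\PT}_{\pi^{t}\varnothing\varnothing\varnothing}(q^{-1})$ at the second vertex is not how the paper's formalism assembles the two charts --- each vertex contributes in the same variable $q$ and the overall exponent is governed by Lemma \ref{chi} --- though this is a presentational issue compared with the inequality above.
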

As an application, we obtain a conjectural generating series for the curve counting invariants of $X=\mathrm{Tot}_{\PP^1}(\oO \oplus \oO(-1) \oplus \oO(-1))$.
Assume Nekrasov's formula (\ref{nek conj intro}), Conjectures \ref{intro affine DT/PT conj}, \ref{intro pair inv gene series}, 
and assume the signs can be chosen compatibly, then 
\begin{align*}\sum_{n,d\geqslant0}I_{n,d}(X; T)\,q^n y^d=
\exp\bigg(\frac{q}{\lambda_2}\bigg(y+\frac{(\lambda_1+\lambda_2)(\lambda_1+\lambda_3)(\lambda_2+\lambda_3)}{\lambda_1\lambda_3(\lambda_1+\lambda_2+\lambda_3)}+\frac{\lambda_3(\lambda_1-\lambda_2)(\lambda_1+\lambda_2+\lambda_3)}{\lambda_1(\lambda_1+\lambda_3)(\lambda_2+\lambda_3)}\bigg)\bigg). \end{align*}

\subsection{Notation and conventions}

In this paper, all varieties and schemes are defined over $\mathbb{C}$. 
For $\fF, \gG \in \mathrm{D^{b}(Coh(\textsl{X}))}$, we denote by $\mathrm{ext}^i(\fF, \gG)$ the dimension of 
$\Ext^i_X(\fF, \gG)$. 
A class $\beta\in H_2(X)$ is called irreducible (resp.~primitive) if it is not the sum of two non-zero effective classes
(resp. if it is not a positive integer multiple of an effective class).

\subsection{Acknowledgements}

Y. C. is grateful to Davesh Maulik and Yukinobu Toda for previous collaboration \cite{CMT2}, 
without which the current work would not exist. We are grateful to Sergej Monavari for helpful discussions and 
the anonymous referee for useful suggestions which improved the exposition of the paper.
Part of this work was done while Y. C. was in Oxford, supported by the Royal Society Newton International Fellowship.
Y. C. is partially supported by the World Premier International Research Center Initiative (WPI), MEXT, Japan, 
JSPS KAKENHI Grant Number JP19K23397 and Royal Society Newton International Fellowships Alumni 2019.

\section{$\mathrm{DT/PT}$ for compact Calabi-Yau 4-folds}

\subsection{DT invariants of Calabi-Yau 4-folds}

Let us first introduce the set-up of Donaldson-Thomas invariants of smooth projective Calabi-Yau 4-folds $X$.
We fix an ample divisor $\omega$ on $X$
and take a cohomology class
$v \in H^{\ast}(X, \mathbb{Q})$.

The coarse moduli space $M_{\omega}(v)$
of $\omega$-Gieseker semistable sheaves
$E$ on $X$ with $\ch(E)=v$ exists as a projective scheme.
We always assume that
$M_{\omega}(v)$ is a fine moduli space, i.e.~any point $[E] \in M_{\omega}(v)$ is stable and
there is a universal family
\begin{align*}
\eE \in \Coh(X \times M_{\omega}(v))
\end{align*}
flat over $M_\omega(v)$.
For instance, the moduli space of 1-dimensional stable sheaves $E$ with $[E]=\beta$, $\chi(E)=1$ and Hilbert schemes of 
closed subschemes satisfy this assumption \cite{Cao, CK, CMT1, CT2}.

In~\cite{BJ, CL}, under certain hypotheses,
the authors construct 
a virtual
class
\begin{align}\label{virtual}
[M_{\omega}(v)]^{\rm{vir}} \in H_{2-\chi(v, v)}(M_{\omega}(v), \mathbb{Z}), \end{align}
where $\chi(-,-)$ denotes the Euler pairing.
Notice that this class could a priori be non-algebraic.

Roughly speaking, in order to construct such a class, one chooses at
every point $[E]\in M_{\omega}(v)$, a half-dimensional real subspace
\begin{align*}\Ext_{+}^2(E, E)\subseteq \Ext^2(E, E)\end{align*}
of the usual obstruction space $\Ext^2(E, E)$, on which the quadratic form $Q$ defined by Serre duality is real and positive definite. 
Then one glues local Kuranishi-type models of the form 
\begin{equation}\kappa_{+}=\pi_+\circ\kappa: \Ext^{1}(E,E)\to \Ext_{+}^{2}(E,E),  \nonumber \end{equation}
where $\kappa$ is the Kuranishi map for $M_{\omega}(v)$ at $[E]$ and $\pi_+$ denotes projection 
on the first factor of the decomposition $\Ext^{2}(E,E)=\Ext_{+}^{2}(E,E)\oplus\sqrt{-1}\cdot\Ext_{+}^{2}(E,E)$.  

In \cite{CL}, local models are glued in three special cases: 
\begin{enumerate}
\item when $M_{\omega}(v)$ consists of locally free sheaves only, 
\item  when $M_{\omega}(v)$ is smooth,
\item when $M_{\omega}(v)$ is a shifted cotangent bundle of a quasi-smooth derived scheme. 
\end{enumerate}
In each case, the corresponding virtual classes are constructed using either gauge theory or algebro-geometric perfect obstruction theory.

The general gluing construction is due to Borisov-Joyce \cite{BJ}, 
based on Pantev-T\"{o}en-Vaqui\'{e}-Vezzosi's theory of shifted symplectic geometry \cite{PTVV} and Joyce's theory of derived $C^{\infty}$-geometry.
The corresponding virtual class is constructed using Joyce's
D-manifold theory (a machinery similar to Fukaya-Oh-Ohta-Ono's theory of Kuranishi space structures used for defining Lagrangian Floer theory).

To construct the above virtual class (\ref{virtual}) with coefficients in $\mathbb{Z}$ (instead of $\mathbb{Z}_2$), we need an orientability result 
for $M_{\omega}(v)$, which can be stated as follows.
Let  
\begin{equation*}
 \lL:=\mathrm{det}(\dR \hH om_{\pi_M}(\eE, \eE))
 \in \Pic(M_{\omega}(v)), \quad  
\pi_M \colon X \times M_{\omega}(v)\to M_{\omega}(v)
\end{equation*}
be the determinant line bundle of $M_{\omega}(v)$, which is equipped with the nondegenerate symmetric pairing $Q$ induced by Serre duality.  An \textit{orientation} of 
$(\mathcal{L},Q)$ is a reduction of its structure group from $\mathrm{O}(1,\mathbb{C})$ to $\mathrm{SO}(1, \mathbb{C})=\{1\}$. In other words, we require a choice of square root of the isomorphism
\begin{equation*}Q: \lL\otimes \lL \to \oO_{M_{\omega}(v)}.  \end{equation*}
Existence of orientations was first proved when the Calabi-Yau 4-fold $X$ satisfies 
$\mathrm{Hol}(X)=\mathrm{SU}(4)$ and $H^{\rm{odd}}(X,\mathbb{Z})=0$ \cite{CL2}, 
and was recently generalized to arbitrary Calabi-Yau 4-folds \cite{CGJ}.
Notice that the collection of orientations forms a torsor for $H^{0}(M_{\omega}(v),\mathbb{Z}_2)$. 

The computational examples in this section only involve the virtual class constructions in situations (2) and (3) mentioned above.
We briefly review them (modulo discussions on choices of orientations) as follows:  
\begin{itemize}
\item When $M_{\omega}(v)$ is smooth, the obstruction sheaf $\mathrm{Ob}\to M_{\omega}(v)$ is a vector bundle endowed with a quadratic form $Q$ via Serre duality. Then the virtual class is given by
\begin{equation}[M_{\omega}(v)]^{\rm{vir}}=\mathrm{PD}(e(\mathrm{Ob},Q)).   \nonumber \end{equation}
Here $e(\mathrm{Ob}, Q)$ is the half-Euler class of 
$(\mathrm{Ob},Q)$ (i.e.~the Euler class of its real form $\mathrm{Ob}_+$), 
and $\mathrm{PD}(-)$ denotes 
Poincar\'e dual. 
Note that the half-Euler class satisfies 
\begin{align*}
e(\mathrm{Ob},Q)^{2}&=(-1)^{\frac{\mathrm{rk}(\mathrm{Ob})}{2}}e(\mathrm{Ob}),  \qquad \textrm{ }\mathrm{if}\textrm{ } \mathrm{rk}(\mathrm{Ob})\textrm{ } \mathrm{is}\textrm{ } \mathrm{even}, \\
 e(\mathrm{Ob},Q)&=0, \qquad\qquad\qquad\qquad \ \,  \textrm{ }\mathrm{if}\textrm{ } \mathrm{rk}(\mathrm{Ob})\textrm{ } \mathrm{is}\textrm{ } \mathrm{odd}. 
\end{align*}
\item Suppose $M_{\omega}(v)$ is the classical truncation of the shifted cotangent bundle of a quasi-smooth derived scheme. Roughly speaking, this means that at any closed point $[E]\in M_{\omega}(v)$, we have a Kuranishi map of the form
\begin{equation}\kappa \colon
 \Ext^{1}(E,E)\to \Ext^{2}(E,E)=V_E\oplus V_E^{*},  \nonumber \end{equation}
where $\kappa$ factors through a maximal isotropic subspace $V_E$ of $(\Ext^{2}(E,E),Q)$. Then the virtual class of $M_{\omega}(v)$ is, 
roughly speaking, the 
virtual class of the perfect obstruction theory formed by $\{V_E\}_{E\in M_{\omega}(v)}$. 
When $M_{\omega}(v)$ is furthermore smooth as a scheme, 
then it is
simply the Euler class of the vector bundle 
$\{V_E\}_{E\in M_{\omega}(v)}$ over $M_{\omega}(v)$. 
\end{itemize}

\subsection{DT/PT correspondence} 

Let $X$ be a smooth projective Calabi-Yau 4-fold and $\beta \in H_2(X)$. Let $I_n(X,\beta)$ be the Hilbert scheme of closed subschemes $ Z \subseteq X$ of dimension $\leqslant 1$
with $\ch(\oO_Z)=(0,0,0,\beta,n)$. 
These Hilbert schemes are isomorphic to moduli spaces of rank 1 torsion free sheaves on $X$ with trivial determinant and $\ch_2=0$. On the latter, one can construct a virtual class 
\begin{equation*}[I_n(X, \beta)]^{\rm{vir}}\in H_{2n}\big(I_n(X, \beta)\big), \end{equation*}
in the sense of Borisov-Joyce \cite{BJ} (as in \cite[Theorem 1.4]{CMT2}).
This virtual class depends on the choice of orientation discussed above.

When $\beta \neq 0$, we consider primary insertions
\begin{align}\label{insert}
\tau \colon H^{*}(X,\ZZ)\to H^{*-2}(I_n(X,\beta),\ZZ),  \quad 
\tau(\gamma)=\pi_{I\ast}(\pi_X^{\ast}\gamma \cup\ch_3(\mathbb{\oO_{\mathcal{Z}}}) ),
\end{align}
where $\pi_X$, $\pi_I$ are projections from $X \times I_n(X,\beta)$
to the corresponding factors, $\mathcal{Z}\subseteq X\times I_n(X,\beta)$ is the universal subscheme and $\ch_3(\mathbb{\oO_{\mathcal{Z}}})$ is the Poincar\'e dual of the class of the maximal Cohen-Macaulay subscheme of $\mathcal{Z}$.

For any $\gamma_1, \ldots, \gamma_m \in H^*(X,\ZZ)$, we define curve counting invariants (with primary insertions) of $X$ as follows
\begin{align*}
I_{n,\beta}(X)(\gamma_1,\ldots,\gamma_m):=&\,\int_{[I_n(X,\beta)]^{\rm{vir}}} \prod_{i=1}^{m} \tau(\gamma_i) \in \ZZ, \\
I_{0,\beta}(X):=&\,\int_{[I_{0}(X,\beta)]^{\rm{vir}}}1 \in \mathbb{Z}. 
\end{align*}
We propose the following $\mathrm{DT/PT}$ correspondence on compact Calabi-Yau 4-folds:
\begin{conj}\label{conj DT/PT}
Let $X$ be a smooth projective Calabi-Yau 4-fold, $\beta \in H_2(X)$, $\gamma_1, \ldots, \gamma_m \in H^*(X,\ZZ)$, and $n \in \ZZ$. 
Then there exists a choice of orientation such that
\begin{align*}(1) \,\, I_{0, \beta}(X)=P_{0, \beta}(X), \quad  (2) \,\, I_{n,\beta}(X)(\gamma_1,\ldots,\gamma_m)=P_{n,\beta}(X)(\gamma_1,\ldots,\gamma_m). \end{align*}
\end{conj}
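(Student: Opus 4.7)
The plan is to adapt the wall-crossing strategy of Bridgeland and Toda from the Calabi-Yau 3-fold setting to Calabi-Yau 4-folds, modified to handle Borisov-Joyce virtual classes. First I would place $I_n(X,\beta)$ and $P_n(X,\beta)$ as open substacks inside a common moduli stack of two-term complexes $[\oO_X \to F]$ in $D^b(\Coh(X))$ with $F$ of dimension at most $1$, using the torsion pair of $0$-dimensional sheaves inside sheaves of dimension $\leqslant 1$: the DT chamber allows $0$-dimensional subsheaves in $F$ (equivalently, embedded or floating points in the corresponding ideal sheaf), while the PT chamber forbids them. Next I would apply a Joyce-style wall-crossing formula to express the difference between $I_{n,\beta}(X)$ and $P_{n,\beta}(X)$ as a sum of contributions from walls where a $0$-dimensional sheaf $\oO_Z[-1]$ is added to a stable pair, each such contribution ideally factoring as a product of a PT invariant of class $(m,\beta)$ with a ``floating-point'' integral on $\Hilb^{n-m}(X)$.

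The core computation is then the vanishing, after integration against primary insertions, of these floating-point contributions. Since $\tau(\gamma) = \pi_{I\ast}(\pi_X^{\ast}\gamma \cup \ch_3(\oO_{\zZ}))$ depends only on the maximal Cohen-Macaulay subscheme of $\zZ$ (as noted after \eqref{insert}), the insertions do not see the floating points, so the floating-point factor reduces to a virtual Euler number on $\Hilb(X)$. On a Calabi-Yau $4$-fold this Euler number should be governed by Nekrasov's conjecture (Remark \ref{Nekrasovconj}); when $\sum_i \deg \tau(\gamma_i) = 2n$, the dimensional count then forces only the ``trivial'' wall to contribute, yielding $I_{n,\beta}(X) = P_{n,\beta}(X)$ and, with $n=0$ and no insertions, $I_{0,\beta}(X) = P_{0,\beta}(X)$.

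The hard part will be establishing the wall-crossing formula itself: Joyce-style Hall algebra wall-crossing on a Calabi-Yau $4$-fold is much more delicate than in dimension $3$, because Borisov-Joyce classes are not a priori compatible with derived stacky operations, orientations must be propagated compatibly along walls, and the shifted symplectic structure on the moduli of complexes requires the recent framework developed by Gross-Joyce-Tanaka. The compatible choice of orientation along the entire wall-crossing path is the main subtle ingredient, which also explains why the conjecture is only stated up to a choice of orientation. As an unconditional first step, I would verify the conjecture in the cases listed after Conjecture \ref{intro: conj DT/PT}---irreducible curve classes on complete intersections, fibres of Weierstrass fibrations, and fibre classes in $Y \times E$---where $I_n(X,\beta) = P_n(X,\beta)$ literally, so that $(1)$ and $(2)$ reduce to the existence of a compatible orientation on a single moduli space via \cite{CGJ}, both to build confidence and to pin down the sign normalisation demanded by the wall-crossing argument.
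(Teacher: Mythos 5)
The statement you are addressing is a \emph{conjecture}: the paper does not prove it, and neither do you. What the paper offers in its place is (i) a heuristic derivation in Section \ref{heuristic argument}, valid for an ``ideal'' Calabi--Yau 4-fold on which all curves deform in expected dimension, where Euler-characteristic inequalities for Cohen--Macaulay curves supported on the ideal rational/elliptic families force the torsion subsheaf $T_0\subseteq\oO_Z$ (the floating points) to vanish outright, so that the relevant loci in $I_n(X,\beta)$ and $P_n(X,\beta)$ literally coincide; and (ii) rigorous verifications in the cases where $\chi(\oO_C)=1$ for all $C\in\Chow_\beta(X)$ or for fibre classes of elliptic fibrations, where again $I_n(X,\beta)\cong P_n(X,\beta)$ as moduli spaces and the identity reduces to matching virtual classes under that isomorphism. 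Your proposal takes a genuinely different route: a Bridgeland--Toda style wall-crossing in which the difference $I_{n,\beta}-P_{n,\beta}$ is organised as a sum over walls where $0$-dimensional sheaves are added, followed by a dimension count killing the $\Hilb^{k}(X)$ factors for $k>0$ because $[I_k(X,0)]^{\vir}$ lives in degree $2k$ and is integrated against $1$. That mechanism is exactly the one the authors invoke informally in the introduction (``primary insertions are insensitive to floating points'' plus the virtual dimension count), and your factorised ansatz mirrors the structure that the paper \emph{does} establish equivariantly in Theorem \ref{affine implies toric}, where the correction factor $I_0(X;T)=\mathsf{V}^{\DT}_{\varnothing\varnothing\varnothing\varnothing}$ is genuinely nontrivial and only disappears after imposing degree-matching or $\int_X c_3^T(X)=0$. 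Two cautions: first, you correctly flag that the Joyce-style wall-crossing formula compatible with Borisov--Joyce classes and orientations is the missing ingredient, so your argument is a programme, not a proof --- which is the honest status of the conjecture; second, your appeal to Nekrasov's conjecture for the floating-point factor is a red herring in the compact setting, since there the vanishing of $\int_{[\Hilb^k(X)]^{\vir}}1$ for $k>0$ is a pure degree count and Nekrasov's formula is an equivariant statement that only enters the toric story. Your proposed ``unconditional first step'' coincides exactly with what the paper actually proves in Section 1.4.
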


\subsection{Heuristic argument}\label{heuristic argument}

In this subsection, we give a heuristic argument to explain part of Conjecture \ref{conj DT/PT}. 
The argument is similar to the one used in \cite{CMT2}. 

Let $X$ be an ``ideal'' smooth projective Calabi-Yau 4-fold, in the sense that curves of $X$ deform in families of expected dimensions, and have expected generic properties as follows:  
\begin{enumerate}
\item Connected reduced curves with arithmetic genus 0 (rational curves). 
Any rational curve in $X$ is a chain of smooth $\mathbb{P}^1$'s with normal bundle $\mathcal{O}_{\mathbb{P}^{1}}(-1) \oplus \oO_{\PP^1}(-1) \oplus \oO_{\PP^1}$ and
moves in a compact 1-dimensional smooth family of embedded rational curves, whose general member is smooth with 
normal bundle $\mathcal{O}_{\mathbb{P}^{1}}(-1) \oplus \oO_{\PP^1}(-1) \oplus \oO_{\PP^1}$.
\item Connected reduced curves with arithmetic genus 1. Any such curve $E$ in $X$ is smooth (i.e.~an elliptic curve) and super-rigid, i.e.~its normal bundle is 
$L_1 \oplus L_2 \oplus L_3$
for general degree zero line bundle $L_i$ on $E$
satisfying $L_1 \otimes L_2 \otimes L_3=\oO_E$. 
Also, any two elliptic curves on $X$ are 
disjoint and disjoint from all families of rational curves on $X$.
\item
There are no connected, reduced curves in $X$ of arithmetic genus $g\geqslant 2$.
\end{enumerate}

${}$ \\
\textbf{$\bullet$ $I_{0, \beta}(X)=P_{0, \beta}(X)$}.
We claim $I_{0}(X,\beta)\cong P_{0}(X,\beta)$. For $[I_Z]\in I_{0}(X,\beta)$, the torsion subsheaf $T_0 \subseteq \oO_Z$ gives an exact sequence
\begin{align*}0\to T_0 \to \oO_Z \to \oO_C\to 0, \end{align*}
where $T_0$ is 0-dimensional and $C$ is Cohen-Macaulay.
From our ``ideal'' assumptions, a 1-dimensional Cohen-Macaulay scheme $C_0$ supported in one of our families of rational curves 
(resp.~elliptic curves) has $\chi(\oO_{C_0})\geqslant 1$ (resp. $\chi(\oO_{C_0})\geqslant 0$). Since $\chi(\oO_Z)=0$, $C$ can only be supported on some elliptic curves in $X$ and $T_0=0$.
Thus $[I_Z] = [I_C] \in P_{0}(X,\beta)$ defines a stable pair. 

Conversely, for $[I^{\mdot}=\{\oO_X\to F\}] \in P_{0}(X,\beta)$, we have a short exact sequence
\begin{align*}0\to \oO_C \to F \to Q\to 0, \end{align*}
where $C$ is Cohen-Macaulay and $Q$ is 0-dimensional. By a similar reasoning, we know $Q=0$, $C$ is supported on some elliptic curves in $X$, and $[I_C] \in I_{0}(X,\beta)$.
The virtual classes get identified under this isomorphism and hence $I_{0, \beta}(X)=P_{0, \beta}(X)$. 
 
${}$ \\
\textbf{$\bullet$ $I_{n,\beta}(X)(\gamma_1,\ldots,\gamma_m)=P_{n,\beta}(X)(\gamma_1,\ldots,\gamma_m)$}. 
Here we only justify the case when $m=n$ and $\gamma_1=\cdots=\gamma_m=\gamma\in H^4(X)$.
For $n\geqslant 1$, we want to compute 
\begin{align*}\int_{[I_{n}(X,\beta)]^{\rm{vir}}}\tau(\gamma)^n, \quad \gamma\in H^4(X,\mathbb{Z}),  \end{align*}
when $X$ is an ideal smooth projective Calabi-Yau 4-fold.
Let $\{Z_i\}_{i=1}^n$ be (mutually distinct) 4-cycles which represent the class $\gamma$. For dimension reasons,
we may assume that for any $i\neq j$
the rational curves meeting with $Z_i$ are
disjoint from those meeting $Z_j$, and the elliptic curves are disjoint from all $Z_i$. 
The insertions cut down the moduli space to finitely many elements whose support intersect all $\{Z_i\}_{i=1}^n$.
We denote the moduli space of such ``incident'' 1-dimensional subschemes by
\begin{align*}Q_{n}(X,\beta;\{Z_i\}_{i=1}^n)\subseteq I_{n}(X,\beta).  \end{align*}
Then we claim that 
\begin{align}\label{Q_n:identity}
Q_{n}(X,\beta;\{Z_i\}_{i=1}^n)=\coprod_{\begin{subarray}{c}
\beta_0+\beta_1+\cdots+\beta_n=\beta  
\end{subarray}}I_{0}(X,\beta_0)\times R_1(X,\beta_1;Z_1)\times \cdots \times R_1(X,\beta_n;Z_n), \end{align}
where $R_1(X,\beta_i;Z_i)$ is the moduli space of 1-dimensional subschemes 
supported on rational curves (in class $\beta_i$) which meet with $Z_i$. 

In fact, for $Z\in Q_{n}(X,\beta;\{Z_i\}_{i=1}^n)$, we have a torsion filtration of $\oO_Z$:
\begin{align*}0\to T_0 \to \oO_Z \to \oO_C\to 0, \end{align*}
where $T_0$ is 0-dimensional and $C$ is Cohen-Macaulay. Note that $\oO_C$ decomposes into a direct sum 
$\bigoplus_{i=0}^n \oO_{C_{i}}$, where 
$C_0$ is supported on elliptic curves and 
each $C_i$ for $1\leqslant i\leqslant n$ is supported on \emph{disjoint} rational curves which meet with $Z_i$. 
As explained before, a Cohen-Macaulay scheme $C'$ supported on a family of rational curves (resp. elliptic curves) satisfies $\chi(\oO_{C'})\geqslant 1$ (resp. $\chi(\oO_{C'})\geqslant 0$), so $\chi(\oO_{C_0})\geqslant 0$, $\chi(\oO_{C_i})\geqslant 1$ for all $1\leqslant i \leqslant n$. 
Since  $\chi(\oO_Z)=n$, we have $\chi(\oO_{C_0})=0$, $\chi(\oO_{C_i})=1$ for all $i=1, \ldots, n$, and $T_0=0$. Therefore (\ref{Q_n:identity}) holds. 

Moreover, each $R_1(X,\beta_i;Z_i)$ consists of 
finitely many rational curves which meet with $Z_i$, 
whose number is exactly $n_{0, \beta_i}(\gamma)$. 
By counting the number of points in $I_{0}(X,\beta_0)$ and 
$R_1(X,\beta_i;Z_i)$, for all $1 \leqslant i \leqslant n$, we obtain  
\begin{align*}
I_{n, \beta}(X)(\gamma,\ldots,\gamma) :=&\,\int_{[I_{n}(X,\beta)]^{\rm{vir}}}\tau(\gamma)^n=\int_{[Q_{n}(X,\beta;\{Z_i\}_{i=1}^n)]^{\mathrm{vir}}}1 \\
=&\,\sum_{\begin{subarray}{c}\beta_0+\beta_1+\cdots+\beta_n=\beta  \\ \beta_0,\beta_1,\cdots,\beta_n\geqslant0 \end{subarray} }I_{0,\beta_0}(X) \cdot \prod_{i=1}^n n_{0,\beta_i}(\gamma). \end{align*}
Combining with $I_{0, \beta}(X)=P_{0, \beta}(X)$ explained above, we see that
the above equality coincides with the formula in Conjecture \ref{pair/GW g=0 conj intro}, so 
$I_{n,\beta}(X)(\gamma,\ldots,\gamma)=P_{n,\beta}(X)(\gamma,\ldots,\gamma)$ for $n>0$.

\subsection{Compact examples}

In this section, we verify Conjecture \ref{conj DT/PT} in some compact examples.

${}$ \\
\textbf{Low degree curve classes}.
\begin{lem}
Let $X$ be a smooth projective variety and $\beta\in H_{2}(X)$. If 
any element $C$ of the Chow variety $\mathrm{Chow}_{\beta}(X)$ satisfies~$\chi(\oO_C)=1$, then 
$$I_{0}(X,\beta)=P_{0}(X,\beta)=\varnothing, $$ 
$$I_{1}(X,\beta)\cong P_{1}(X,\beta). $$
Moreover, when $X$ is a Calabi-Yau 4-fold and $\beta$ is as above, Conjecture \ref{conj DT/PT} (1) and (2) for $n=1$ hold.
\end{lem}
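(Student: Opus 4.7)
The plan is to prove both parts by identifying the moduli spaces on each side. For $n=0$, I will show that both $I_0(X,\beta)$ and $P_0(X,\beta)$ are empty, and for $n=1$, that $I_1(X,\beta)$ and $P_1(X,\beta)$ are both isomorphic to the moduli space of Cohen-Macaulay curves on $X$ in class $\beta$.

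For the emptiness, given $[I_Z]\in I_0(X,\beta)$, consider the torsion filtration
\[ 0 \to T_0 \to \oO_Z \to \oO_C \to 0, \]
where $T_0$ is the (0-dimensional) torsion subsheaf of $\oO_Z$ and $C$ is the maximal Cohen-Macaulay subcurve, so $[C]=\beta$. The hypothesis gives $\chi(\oO_C)=1$, while $\chi(T_0)\geqslant 0$, forcing $\chi(\oO_Z)\geqslant 1$ and contradicting $\chi(\oO_Z)=0$. For $[\oO_X\to F]\in P_0(X,\beta)$, the scheme-theoretic support $C$ of the pure sheaf $F$ is Cohen-Macaulay in class $\beta$; the section factors through a map $\oO_C\to F$ whose kernel would be a pure 1-dimensional subsheaf of $\oO_C$ and would therefore contribute to the 1-dimensional part of the cokernel of $s$, contradicting its 0-dimensionality. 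So $\oO_C\hookrightarrow F$ with 0-dimensional cokernel, giving $\chi(F)\geqslant \chi(\oO_C)=1\neq 0$.

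For $n=1$, the same filtration with $\chi(\oO_Z)=1=\chi(\oO_C)$ forces $T_0=0$, hence $\oO_Z\cong\oO_C$ and $Z$ is itself Cohen-Macaulay. On the PT side, the factorization $\oO_X \twoheadrightarrow \oO_C \xrightarrow{s'} F$ has $s'$ injective by the argument above, and the Euler characteristic identity $\chi(F)=\chi(\oO_C)+\length(F/\oO_C)=1+\length(F/\oO_C)=1$ forces $F=\oO_C$. Thus the stable pair is simply the quotient map $\oO_X\twoheadrightarrow\oO_C$, and the two-term complex $[\oO_X\to\oO_C]$ is quasi-isomorphic to the ideal sheaf $I_C$. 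Comparing universal families gives an isomorphism of schemes $I_1(X,\beta)\cong P_1(X,\beta)$, both identified with the Hilbert scheme of Cohen-Macaulay curves in class $\beta$.

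For the Calabi-Yau 4-fold statement, both moduli spaces parametrize the same derived object $I_C\in D^b(\Coh(X))$, so their tangent-obstruction data---the complex $\Ext^{\bullet}(I_C,I_C)$ with the Serre-duality pairing on $\Ext^2$---agree. Hence for compatible choices of orientation, the Borisov-Joyce virtual classes coincide. Moreover, under the identification of universal families one has $\mathbb{F}=\oO_{\mathcal{Z}}$, so $\ch_3(\mathbb{F})=\ch_3(\oO_{\mathcal{Z}})$ and the primary insertions $\tau(\gamma_i)$ agree, yielding parts (1) and (2) of Conjecture \ref{conj DT/PT} for $n=1$. The main obstacle is the careful comparison of virtual classes: the scheme-level isomorphism is clean, but the Borisov-Joyce construction involves real $C^{\infty}$-geometric choices of half-dimensional positive subspaces of the obstruction spaces, and one must verify that compatible orientation and positivity data can be transported between the two moduli problems; this should follow from the coincidence of the underlying derived object and hence of the ambient obstruction complex with its Serre-duality form.
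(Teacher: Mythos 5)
Your proposal is correct and follows essentially the same route as the paper: the torsion filtration $0\to T_0\to\oO_Z\to\oO_C\to 0$ on the Hilbert-scheme side and the sequence $0\to\oO_C\to F\to Q\to 0$ on the stable-pairs side, with the hypothesis $\chi(\oO_C)=1$ forcing emptiness for $n=0$ and $T_0=0$, $Q=0$ for $n=1$, hence $I_1(X,\beta)\cong P_1(X,\beta)$ with identified virtual classes and insertions. The paper's proof is just a terser version of the same argument (it does not spell out the orientation-transport point you raise, simply asserting the virtual classes are identified under the isomorphism).
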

\begin{proof}
Let $n=0$ or $1$. Given $[I_Z] \in I_{n}(X,\beta)$ with torsion filtration $T_0\subseteq \oO_Z$ such that 
$\oO_C:=\oO_Z/T_0$ with $C$ Cohen-Macaulay, we have 
\begin{align*}n=\chi(\oO_Z)=\chi(\oO_C)+\chi(T_0)\geqslant \chi(\oO_C)=1. \end{align*}
This is only possible when $n=1$ and $T_0=0$, so $I_Z$ is a stable pair. 

Similarly for $[I^\mdot=\{\oO_X\to F \}]\in P_n(X,\beta)$, we have an exact sequence 
\begin{align*}0\to \oO_C \to F \to Q \to 0, \end{align*}
where $C=\mathrm{supp}(F)$ and $Q$ is 0-dimensional. This is only possible when $n=1$ and $Q=0$, so $I^\mdot\cong I_C$. 
Working in families, we obtain an isomorphism 
\begin{align*}I_1(X,\beta)\cong P_1(X,\beta). \end{align*}
When $X$ is a Calabi-Yau 4-fold, the virtual classes are identified under this isomorphism. Capping with insertions gives the result.\end{proof}
The above lemma enables us to verify our conjecture in the following examples:
\begin{prop}
Conjecture \ref{conj DT/PT} (1) and (2) for $n=1$ hold in the following cases:  
\begin{enumerate}
\item[(A)] $\beta\in H_{2}(X)$ is irreducible and $X$ is one of the following: 
\begin{enumerate}
\item $X$ is one of the quintic fibrations in \cite{KP}, 
\item $X$ is a complete intersection in a product of projective spaces, 
\item $X=Y\times E$ and $\beta\in H_2(Y)\subseteq H_2(X)$, where $Y$ is a smooth complete intersection Calabi-Yau 3-fold in a product of projective spaces and $E$ an elliptic curve,
\end{enumerate}
\item[(B)] $\beta=2[l]$ and $X\subseteq \mathbb{P}^5$ is a generic sextic 4-fold, where $[l] \in H_2(X)$ is the class of a line. 
\end{enumerate}
\end{prop}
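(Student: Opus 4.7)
The plan is to apply the preceding lemma in each case, reducing Conjecture \ref{conj DT/PT}(1) and (2) for $n \leqslant 1$ to the statement that every $C \in \Chow_\beta(X)$ (or at least every Cohen-Macaulay curve appearing as the support of an ideal sheaf or stable pair of the relevant Euler characteristic) satisfies $\chi(\oO_C) = 1$. Once this holds, the proof of the lemma yields $I_1(X,\beta) \cong P_1(X,\beta)$ scheme-theoretically with matching virtual classes, and Conjecture \ref{conj DT/PT} follows by capping with insertions.

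For part (A), irreducibility of $\beta$ forces every $C \in \Chow_\beta(X)$ to be irreducible and reduced: a multi-component decomposition would write $\beta$ as a sum of two non-zero effective classes, and a non-reduced structure would write $\beta$ as a positive integer multiple of an effective class, both contradicting irreducibility. It therefore suffices to check $p_a(C) = 0$. In subcase (a), the explicit description of the quintic fibrations in \cite{KP} forces irreducible curves in the relevant classes to lie in the rational fibres, giving $p_a = 0$. In subcase (b), an irreducible class on a complete intersection Calabi-Yau $4$-fold in a product of projective spaces has small multidegree relative to the hyperplane classes, and a standard adjunction/Castelnuovo bound on the arithmetic genus of an irreducible curve of that multidegree forces $p_a = 0$. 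In subcase (c), any irreducible effective curve in class $\beta \in H_2(Y)$ must project to a single point of $E$ (otherwise its class would acquire a non-zero $[E]$-component), hence has the form $C' \times \{e\}$ with $C' \subset Y$ irreducible in class $\beta$, reducing to the analogous genus bound on the complete intersection Calabi-Yau $3$-fold $Y$.

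For part (B), the class $\beta = 2[l]$ is \emph{not} irreducible, so $\Chow_{2[l]}(X)$ must be analysed directly. Its members are supported on smooth or nodal conics, on a double-line structure over a single line, on a pair of lines meeting at a point, or on two disjoint lines. The first three strata satisfy $\chi(\oO_C) = 1$ by a direct computation: for smooth or nodal conics this is immediate, and for a CM double-line structure appearing as a flat limit of smooth conics the conormal sequence $0 \to N_{l/X}^\vee \to \oO_X/I_C \to \oO_l \to 0$ combined with the adjunction computation $\deg N_{l/X} = -2$ (using $K_X \cong \oO_X$) places $\oO_C$ in an extension $0 \to \oO_l(-1) \to \oO_C \to \oO_l \to 0$, giving $\chi(\oO_C) = 1$. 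Two disjoint lines have $\chi = 2$, but this configuration can never appear as the CM support of $\oO_Z$ with $\chi(\oO_Z) = 1$, nor of a stable pair cokernel with $\chi = 1$, since both situations force $\chi(\oO_C) \leqslant 1$. Hence the preceding lemma's proof still carries through verbatim on the CM supports that actually contribute, yielding $I_1(X,2[l]) \cong P_1(X,2[l])$ with matching virtual classes.

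The main obstacle is the double-line analysis in (B): one must verify that every flat CM thickening of a line with $\chi = 1$ admits the correct classification, and that genericity of the sextic rules out anomalous configurations (e.g.~lines with non-standard normal bundle splitting or pairs of coplanar lines varying in unexpected families) that could shift the $\chi$ count away from $1$ on a stratum actually meeting $I_1$ or $P_1$.
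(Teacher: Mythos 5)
Your overall strategy is the paper's: feed each case into the preceding lemma by showing every Cohen--Macaulay curve in class $\beta$ has $\chi(\oO_C)=1$ (or at least $\geqslant 1$), and your treatment of part (A) --- irreducibility forces irreducible reduced curves, which in each of these geometries are rational --- is essentially what the paper does (deferring details to \cite[Prop.~2.8]{CMT2}). Your observation in (B) that disjoint pairs of lines, though present in $\Chow_{2[l]}(X)$ with $\chi=2$, cannot arise as the CM support of anything in $I_n$ or $P_n$ for $n\leqslant 1$ and therefore do not obstruct the lemma's proof, is a legitimate and in fact more careful reading than the paper's one-line assertion.

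The genuine gap is exactly where you flag it: the double-line stratum in case (B). Knowing only $\deg N_{l/X}=-2$ does \emph{not} place $\oO_C$ in an extension $0\to\oO_l(-1)\to\oO_C\to\oO_l\to 0$. A CM ribbon on $l$ corresponds to a line-bundle quotient $N_{l/X}^{\vee}\twoheadrightarrow L$ with $0\to L\to\oO_C\to\oO_l\to 0$ and $\chi(\oO_C)=2+\deg L$, and the possible degrees of $L$ are governed by the \emph{splitting type} of $N_{l/X}$ (rank $3$, total degree $-2$), not its total degree: for $N_{l/X}\cong\oO\oplus\oO(-1)\oplus\oO(-1)$ every ribbon has $\chi\geqslant 2$, while for unbalanced splittings such as $\oO(1)\oplus\oO(-1)\oplus\oO(-2)$ one can have $\chi=1$ or less, and which quotients are actually realized by subschemes of $X$ is a further question. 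Ruling out double lines with $\chi(\oO_C)\leqslant 1$ (equivalently, controlling the normal bundles of all lines on a generic sextic) is a genuine genericity/incidence statement and is precisely the content of the reference the paper invokes, \cite[Prop.~1.4]{Cao2}, which classifies all degree-two curves on a generic sextic $4$-fold as smooth conics or pairs of distinct intersecting lines. Your proof is therefore incomplete at this step; moreover your restriction to double lines ``appearing as a flat limit of smooth conics'' is unjustified, since $I_1(X,2[l])$ and $P_1(X,2[l])$ could a priori contain components not lying in the closure of the smooth-conic locus, so all CM curves in class $2[l]$ with $\chi\leqslant 1$ must be excluded, not only those arising as such limits.
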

\begin{proof}
In case (B), any $C\in \Chow_{\beta}(X)$ is a smooth conic or union of two distinct lines \cite[Prop.~1.4]{Cao2} hence $\chi(\oO_C) = 1$.
The other cases are obvious (see also \cite[Proposition 2.8]{CMT2}).
\end{proof}

${}$ \\
\textbf{Elliptic fibrations}.
Next, we discuss non-primitive examples on elliptic fibrations. For $Y=\mathbb{P}^3$, we take general elements
\begin{align*}
u \in H^0(Y, \oO_Y(-4K_Y)), \
v \in H^0(Y, \oO_Y(-6K_Y)).
\end{align*}
Let $X$
be a smooth projective Calabi-Yau 4-fold with an elliptic fibration
\begin{align}\label{elliptic fib}
\pi \colon X \to Y
\end{align}
given by the Weierstrass equation
\begin{align*}
zy^2=x^3 +uxz^2+vz^3
\end{align*}
in the $\mathbb{P}^2$-bundle
\begin{align*}
\mathbb{P}(\oO_Y(-2K_Y) \oplus \oO_Y(-3K_Y) \oplus \oO_Y) \to Y,
\end{align*}
where $[x:y:z]$ are homogeneous coordinates in the fibres of the above
projective bundle. A general fiber of
$\pi$ is a smooth elliptic curve, and any singular
fiber is either a nodal or cuspidal plane curve.
Moreover, $\pi$ admits a section $\iota : Y \hookrightarrow X$ whose image
corresonds to the fibre point $[0: 1: 0]$. We denote the class of the fibre by $[f] \in H_2(X)$.
\begin{lem}\label{lem elliptic fib}
For any $r \geqslant1$, there exists an isomorphism
\begin{equation}I_{0}(X,r[f])\cong \Hilb^{r}(\mathbb{P}^{3}),  \nonumber \end{equation}
under which the virtual class is given by
\begin{equation}[I_{0}(X,r[f])]^{\rm{vir}}=(-1)^r\cdot[\Hilb^{r}(\mathbb{P}^{3})]^{\rm{vir}},  \nonumber \end{equation}
for certain choice of orientation on the LHS. 
\end{lem}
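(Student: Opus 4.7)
The plan is to realize the isomorphism via the scheme-theoretic pullback $\xi \mapsto \pi^{-1}(\xi)$ and then compare the two obstruction theories using an adjunction computation for the flat map $\pi$. First I would check that the map $\Phi:\Hilb^r(\PP^3)\to I_{0}(X,r[f])$, $\xi\mapsto \pi^{-1}(\xi)$, is well-defined: since $\pi$ is flat and every Weierstrass fibre (smooth or singular) has arithmetic genus $1$, the pullback $\pi^{-1}(\xi)$ is a $1$-dimensional subscheme of $X$ of class $r[f]$ with $\chi(\oO_{\pi^{-1}(\xi)})=r\cdot\chi(\oO_{f_y})=0$. To invert $\Phi$, take any $[I_Z]\in I_{0}(X,r[f])$ with torsion filtration $0\to T_0\to\oO_Z\to\oO_C\to 0$, where $T_0$ is $0$-dimensional and $C$ is Cohen--Macaulay of pure dimension $1$. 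For generic $u,v$ the fibres of $\pi$ are irreducible plane cubics, and flatness of $\pi$ gives, locally, that any CM curve supported in fibres has the form $\pi^{-1}(\xi')$ for a unique 0-dimensional subscheme $\xi'\subseteq \PP^3$. Comparing classes forces $\mathrm{length}(\xi')=r$ and hence $\chi(\oO_C)=0$, which combined with $\chi(\oO_Z)=0$ forces $T_0=0$ and $Z=\pi^{-1}(\xi')$. The same argument in families produces a scheme-theoretic inverse to $\Phi$.

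For the virtual class, the plan is to compute $\RHom_X(I_Z,I_Z)_0$ at $Z=\pi^{-1}(\xi)$ via adjunction, and to match it (up to a sign) with the standard DT$_3$ Behrend--Fantechi obstruction theory on $\Hilb^r(\PP^3)$. The key ingredient is the Leray decomposition
\begin{equation*}
R\pi_*\oO_X\;\cong\;\oO_Y\oplus K_Y[-1],
\end{equation*}
which holds because the section $\iota:Y\hookrightarrow X$ splits off $\pi_*\oO_X=\oO_Y$, and because the Calabi--Yau condition $\omega_X\cong\oO_X$ forces $\omega_{X/Y}\cong\pi^*K_Y^{-1}$, so by relative duality $R^1\pi_*\oO_X\cong K_Y$. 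Since $I_Z=\pi^*I_\xi$ (by flatness), projection formula and Grothendieck--Verdier adjunction then yield
\begin{equation*}
\RHom_X(I_Z,I_Z)_0 \;\cong\; \RHom_Y(I_\xi,I_\xi)_0 \,\oplus\, \RHom_Y(I_\xi,I_\xi\otimes K_Y)_0[-1],
\end{equation*}
and Serre duality on the smooth 3-fold $Y$ identifies the second summand with $\RHom_Y(I_\xi,I_\xi)_0^{\vee}[-3]$. In particular $\Ext^2_X(I_Z,I_Z)_0\cong V\oplus V^\vee$ with $V:=\Ext^2_Y(I_\xi,I_\xi)_0$, and the CY$_4$ Serre pairing $Q$ becomes the tautological hyperbolic pairing on $V\oplus V^\vee$, exhibiting the expected self-duality.

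The sign $(-1)^r$ is then extracted from the choice of orientation in the Borisov--Joyce construction. A maximal real positive-definite isotropic subspace of $(\Ext^2_X(I_Z,I_Z)_0,Q)$ can be taken (after complexification) to be either $V$ or $V^\vee$; the corresponding half-Euler classes differ by $(-1)^{\mathrm{rk}(V)}$. The virtual rank of $V$ on $\Hilb^r(\PP^3)$ is $3r$ (a pointwise check at reduced clusters, globalized via the two-term perfect obstruction theory), so this sign equals $(-1)^{3r}=(-1)^r$. Since the DT$_3$ Behrend--Fantechi class on the right-hand side is precisely $[\Hilb^r(\PP^3)]^{\vir}=e(V)\cap[\Hilb^r(\PP^3)]$ (virtual dimension zero, tangent and obstruction of the same virtual rank $3r$), the orientation of $\lL$ corresponding to the Lagrangian $V^\vee$ gives
\begin{equation*}
[I_{0}(X,r[f])]^{\vir} \;=\; (-1)^{r}\,[\Hilb^r(\PP^3)]^{\vir}.
\end{equation*}
The hard part will be to verify that the chosen orientation is globally coherent across $I_{0}(X,r[f])$, in particular on the locus where $\Hilb^r(\PP^3)$ is singular and $V$ fails to be a genuine vector bundle (for $r\geqslant 4$); the parity argument must be carried out via the virtual rank in the perfect obstruction theory rather than at individual points, and one must check that the splitting of $R\pi_*\oO_X$ survives to produce a consistent square root of $Q$ over the whole moduli space.
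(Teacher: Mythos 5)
Your construction of the inverse map contains a genuine gap, and the logic around it runs backwards. You assert that ``flatness of $\pi$ gives, locally, that any CM curve supported in fibres has the form $\pi^{-1}(\xi')$,'' and you then \emph{deduce} $\chi(\oO_C)=0$ by comparing classes. The assertion is false without first imposing a constraint on $\chi(\oO_C)$: take a smooth fibre $E=X_p$, whose conormal bundle is $N^*_{E/X}\cong\oO_E^{\oplus 3}$, choose a surjection $\oO_E^{\oplus 3}\twoheadrightarrow L$ onto a globally generated line bundle of degree $3$, and let $I\subseteq I_E$ be the preimage of its kernel under $I_E\to I_E/I_E^2$. The resulting ribbon $C$ is Cohen--Macaulay, has class $2[f]$, sits in $0\to L\to\oO_C\to\oO_E\to 0$ so that $\chi(\oO_C)=3$, and is not a pullback of any $\xi'\subseteq\PP^3$. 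So the pullback description of $C$ cannot be the source of the equality $\chi(\oO_C)=0$; it is a consequence of it. The paper establishes the inequality $\chi(\oO_C)\geqslant 0$ first, by a different mechanism: it takes the Harder--Narasimhan/Jordan--H\"older filtration of $\oO_C$ with stable factors $E_i$ of class $r_i[f]$, invokes \cite[Lem.~2.2]{CMT1} to place each $E_i$ on a single fibre, uses the surjection $\oO_X\to\oO_C\twoheadrightarrow E_n$ to get a nonzero section of $E_n'$ on that fibre and hence $\chi(E_n)\geqslant 0$, and then uses monotonicity of the reduced Hilbert polynomials to conclude $\chi(\oO_C)=\sum_i\chi(E_i)\geqslant 0$. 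Combined with $0=\chi(\oO_Z)\geqslant\chi(\oO_C)$ this forces $\chi(\oO_C)=0$ and $T_0=0$, and only \emph{then} does one identify $C$ as $\pi^{-1}(\xi')$ (using, e.g., that a globally generated degree-$0$ line bundle on a genus-one fibre is trivial --- note also that the singular nodal and cuspidal fibres are present for every choice of $u,v$ and must be covered by the argument, so ``generic $u,v$'' buys you nothing here). You should restructure this half of the proof along those lines or find an independent a priori bound on $\chi(\oO_C)$.

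The second half of your proposal --- the splitting $R\pi_*\oO_X\cong\oO_Y\oplus K_Y[-1]$, the resulting decomposition $\Ext^2_X(I_Z,I_Z)_0\cong V\oplus V^{\vee}$ with $V=\Ext^2_Y(I_\xi,I_\xi)_0$ hyperbolic for the Serre pairing, and the sign $(-1)^{\rk V}=(-1)^{3r}=(-1)^r$ from the two choices of isotropic subspace --- is the standard comparison (the paper defers it to the stable-pair case \cite[Lem.~2.5]{CMT2}) and looks correct in outline; as you note yourself, for $r\geqslant 4$ one must phrase it via the shifted-cotangent-bundle/virtual-rank formulation rather than pointwise, since $\Hilb^r(\PP^3)$ is then singular and $V$ is not a bundle.
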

\begin{proof}
The proof is similar to the stable pair case \cite[Lemma 2.5]{CMT2}. 
We show that the natural morphism
\begin{align*}
\pi^{\ast} \colon \Hilb^r(\mathbb{P}^3) \to I_0(X, r[f])
\end{align*}
is an isomorphism.
Let $[I_Z] \in I_0(X, r[f])$ be an ideal sheaf and $T_0\subseteq \oO_Z$ the torsion subsheaf. 
Denote $\oO_C:=\oO_Z/T_0$, then
\begin{align} \label{ineqs}
0=\chi(\oO_Z)=\chi(\oO_C)+\chi(T_0)\geqslant \chi(\oO_C). 
\end{align}
Fix a polarization on $X$. By the Harder-Narasimhan and Jordan-H\"older filtrations, we have 
\begin{equation}0=F_{0}\subseteq F_{1}\subseteq F_{2}\subseteq\cdot\cdot\cdot\subseteq F_{n}=\oO_C,  \nonumber \end{equation}
where the quotients $E_{i}=F_{i}/F_{i-1}$ are 1-dimensional and stable with decreasing reduced Hilbert polynomials
\begin{equation}p(E_{1})\geqslant p(E_{2})\geqslant \cdot\cdot\cdot\geqslant p(E_{n}).  \nonumber \end{equation}
Note $\ch(E_{i})=(0,0,0,r_{i}[f],\chi(E_{i}))$ for some $r_{i}\geqslant1$, 
so the above inequalities are equivalent to
\begin{equation}\frac{\chi(E_{1})}{r_{1}}\geqslant \frac{\chi(E_{2})}{r_{2}}\geqslant \cdots\geqslant \frac{\chi(E_{n})}{r_{n}}.  \nonumber \end{equation}
By \cite[Lem.~2.2]{CMT1}, stability of $E_{i}$ implies that it is scheme theoretically supported on some fibre $X_{p_{i}}=\pi^{-1}(p_{i})$ of $\pi$, i.e.~$E_{i}=(\iota_{p_{i}})_{*} (E_{i}')$ for some $\iota_{p_{i}}:X_{p_{i}}\hookrightarrow X$ and stable sheaf $E_{i}'\in \Coh(X_{p_{i}})$.

Since $s:\mathcal{O}_{X}\rightarrow \oO_C$ is surjective, so is the composition $\mathcal{O}_{X}\rightarrow \oO_C\twoheadrightarrow E_{n}$.
By adjunction, we obtain an isomorphism
\begin{equation}\Hom_{X}(\mathcal{O}_{X},E_{n})\cong \Hom_{X_{p_{n}}}(\mathcal{O}_{X_{p_{n}}},E_{n}')\neq0, \nonumber \end{equation}
which implies that $p(E_{n}')\geqslant 0$, hence $\chi(E_{n})\geqslant 0$. Therefore
\begin{equation}0\geqslant\chi(\oO_C)=\sum_{i=1}^{n}\chi(E_{i})\geqslant 0,    \nonumber \end{equation}
which implies $T_0=0$ and $I_Z=I_C$ is a stable pair by \eqref{ineqs}. The remaining argument is the same as the stable pair case (ref.~\cite[Lemma 2.5]{CMT2}). 
\end{proof}
In particular, we conclude:
\begin{prop}
Conjecture \ref{conj DT/PT} (1) holds in the following cases: 
\begin{enumerate}
\item $X$ the Weierstrass elliptic fibration (\ref{elliptic fib}) and $\beta=r[f]$ for any $r\geqslant1$.  
\item  $X=Y\times E$ is the product of a smooth projective Calabi-Yau 3-fold $Y$ and an elliptic curve $E$, and $\beta=r[E]$, for any $r\geqslant1$, where $[E]$ denotes the class of $\{\mathrm{pt}\} \times E$.
\end{enumerate}
\end{prop}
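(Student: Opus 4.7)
The plan is to reduce both cases to the identification of $I_0(X,\beta)$ and $P_0(X,\beta)$ with the same auxiliary Hilbert scheme, equipped with compatible virtual classes. For case (1), Lemma~\ref{lem elliptic fib} already provides the key input on the DT side: an isomorphism $\pi^{\ast}\colon\Hilb^{r}(\PP^{3})\simto I_{0}(X,r[f])$ and a choice of orientation for which
\[
[I_{0}(X,r[f])]^{\vir}=(-1)^{r}\,[\Hilb^{r}(\PP^{3})]^{\vir}.
\]
The analogous statement on the PT side was proved in \cite[Lemma~2.5]{CMT2}, namely $P_{0}(X,r[f])\cong \Hilb^{r}(\PP^{3})$ with the same virtual class, up to the same sign $(-1)^{r}$, for an appropriate orientation. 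Since Conjecture~\ref{conj DT/PT} only asks for the existence of a choice of orientation, we pick the two compatible orientations and integrate~$1$ over the common virtual class to conclude $I_{0,r[f]}(X)=P_{0,r[f]}(X)$.

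For case (2), the plan is to prove the analogue of Lemma~\ref{lem elliptic fib} for the projection $\pi\colon X=Y\times E\to Y$, namely
\[
I_{0}(X,r[E])\cong \Hilb^{r}(Y),\qquad [I_{0}(X,r[E])]^{\vir}=(-1)^{r}\,[\Hilb^{r}(Y)]^{\vir},
\]
for a suitable orientation. The strategy follows Lemma~\ref{lem elliptic fib} step by step. Given $[I_Z]\in I_0(X,r[E])$ with Cohen-Macaulay quotient $\oO_C=\oO_Z/T_0$, I would apply Harder-Narasimhan and Jordan-H\"older (with respect to some polarization on $X$) to $\oO_C$, and use that every stable $1$-dimensional sheaf with class a multiple of $[E]$ is scheme-theoretically supported on a single fibre of $\pi$ (this uses the product structure; it is in fact simpler than in the Weierstrass case because all fibres are smooth of genus $1$). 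Then $\chi$ of the top quotient is $\geqslant 0$ by adjunction applied to the surjection $\oO_X\twoheadrightarrow E_n$, forcing $\chi(\oO_C)\geqslant 0$ and hence $T_0=0$ and $\chi(\oO_C)=0$. Working in families identifies $I_0(X,r[E])$ with $\Hilb^{r}(Y)$ via $Z\mapsto \pi_\ast Z$. The PT analogue in this setting was established in \cite{CMT2}, and combining the two identifications yields $I_{0,r[E]}(X)=P_{0,r[E]}(X)$.

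The main technical point is the computation of the virtual class on the DT side in case~(2), where one must track the half-Euler class of the obstruction sheaf through the isomorphism with $\Hilb^{r}(Y)$ and verify that the sign is the same $(-1)^{r}$ appearing on the PT side. I would mimic the argument of Lemma~\ref{lem elliptic fib}: use that $Y$ is Calabi-Yau together with Serre duality on $X=Y\times E$ to identify the obstruction bundle with a self-dual object whose half-Euler class is, up to $(-1)^r$, the usual virtual class of $\Hilb^{r}(Y)$. The only delicate point is matching orientations between the DT and PT sides, which is resolved by the freedom to choose an orientation on each moduli space independently. Once the two descriptions of the virtual class agree up to sign (uniformly with the PT side), the equality of invariants follows by integrating~$1$.
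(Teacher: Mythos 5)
Your proposal is correct and follows essentially the same route as the paper: case (1) is exactly the paper's argument (Lemma \ref{lem elliptic fib} on the DT side plus \cite[Lem.~2.5]{CMT2} on the PT side), and for case (2) the paper likewise just asserts that the analogue of Lemma \ref{lem elliptic fib} holds for $Y\times E$ and compares with \cite[Lem.~2.14]{CMT2}, which is precisely the reduction you flesh out. The only cosmetic slip is that the identification $\Hilb^r(Y)\cong I_0(X,r[E])$ goes via $W\mapsto W\times E$ (i.e.~$\pi^*$), not $Z\mapsto\pi_*Z$, but this does not affect the argument.
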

\begin{proof}
(1) By Lemma \ref{lem elliptic fib} and \cite[Lem. 2.5]{CMT2}, $P_{0,\beta}(X)=I_{0,\beta}(X)$. 
(2) A similar result as Lemma \ref{lem elliptic fib} holds in this case. By comparing with 
\cite[Lem. 2.14]{CMT2}, we are done. 
\end{proof}

\section{$\mathrm{DT/PT}$ on toric Calabi-Yau 4-folds}

In this section, we study equivariant analogues of the DT/PT correspondence for toric Calabi-Yau 4-folds.
After studying fixed loci, we define equivariant curve counting invariants and stable pair invariants, which are rational functions 
of the equivariant parameters. We formulate a vertex formalism for both invariants and conjecture a relation between the (fully equivariant) DT/PT vertex, which we check in several cases. 
This implies a DT/PT correspondence for toric Calabi-Yau 4-folds with primary insertions. For local surfaces, we consider cases where the correspondence reduces to an equality of numbers providing good motivation for Conjecture \ref{conj DT/PT}.

\subsection{Fixed loci of Hilbert schemes} \label{DTfix}

Let $X$ be a smooth quasi-projective toric Calabi-Yau 4-fold. By this we mean a smooth quasi-projective toric 4-fold $X$ satisfying $K_X \cong \oO_X$ and  $H^{>0}(\oO_X) = 0$. We also assume the fan contains cones of dimension 4. Such cones correspond to $(\mathbb{C}^*)^4$-invariant affine open subsets (equivariantly) isomorphic to $\mathbb{C}^4$. Important examples are the following:
\begin{itemize} 
\item $X=\mathrm{Tot}_{\PP^1 \times \PP^1}(\oO(-1,-1) \oplus \oO(-1,-1))$, 
\item $X=\mathrm{Tot}_{\mathbb{P}^2}(\oO(-1) \oplus \oO(-2))$,  
\item $X=\mathrm{Tot}_{\mathbb{P}^1}(\oO(a,b,c))$, with $a+b+c=-2$.
\end{itemize}
Let $\Delta(X)$ be the polytope corresponding to $X$ and denote the collection of its vertices and edges by $V(X)$, $E(X)$ respectively. The elements $\alpha \in V(X)$ correspond to the $(\CC^*)^4$-fixed points $p_\alpha \in X$. Each such fixed point lies in a maximal $(\CC^*)^4$-invariant affine open subset $\CC^4 \cong U_\alpha \subseteq X$. The elements $\alpha\beta \in E(X)$ (connecting vertices $\alpha$, $\beta$) correspond to the $(\CC^*)^4$-invariant lines $\PP^1 \cong C_{\alpha\beta} \subseteq X$. Such a line has normal bundle
\begin{align}
\begin{split} \label{normal}
&N_{C_{\alpha\beta}/X} \cong \oO_{\PP^1}(m_{\alpha\beta}) \oplus  \oO_{\PP^1}(m_{\alpha\beta}') \oplus  \oO_{\PP^1}(m_{\alpha\beta}''), \\
&m_{\alpha\beta} + m_{\alpha\beta}' + m_{\alpha\beta}'' = -2, 
\end{split}
\end{align}
where the first isomorphism follows from Grothendieck's splitting theorem and the second equality follows from the fact that $X$ is Calabi-Yau.

The action of the dense open torus $(\CC^*)^4$, and its Calabi-Yau subtorus $T \subseteq (\CC^*)^4$, lift to the Hilbert scheme $I_n(X,\beta)$. Although $I_n(X,\beta)$ is in general not proper, its $(\CC^*)^4$-fixed and $T$-fixed loci are proper. In fact, these loci coincide and are 0-dimensional and reduced.
\begin{lem} \label{Tfixlocus}
At the level of closed points, we have
\begin{equation}I_n(X,\beta)^T=I_n(X,\beta)^{(\mathbb{C}^*)^4}, \nonumber \end{equation}
which consists of finitely many points.
\end{lem}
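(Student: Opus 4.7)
The plan is to exploit the one-parameter quotient $(\CC^*)^4/T \cong \CC^*$ together with the Calabi-Yau relation among $T$-weights to show that every $T$-invariant ideal describing $Z$ is, in each chart, automatically a monomial ideal; finiteness then reduces to standard toric combinatorics.

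The inclusion $I_n(X,\beta)^{(\CC^*)^4}\subseteq I_n(X,\beta)^T$ is immediate. For the reverse inclusion, I take $[Z]\in I_n(X,\beta)^T$ and restrict $Z$ to each maximal $(\CC^*)^4$-invariant affine chart $U_\alpha\cong\CC^4$ with coordinates $x_1,\ldots,x_4$, obtaining a $T$-invariant ideal $I_\alpha\subseteq R_\alpha:=\CC[x_1,\ldots,x_4]$. Smoothness of $X$ ensures the $(\CC^*)^4$-characters of the coordinates form a $\ZZ$-basis of the character lattice $M$, while the Calabi-Yau hypothesis provides a primitive $\ell\in M$ taking value $1$ on each ray generator, so that the $T$-weights $w_1,\ldots,w_4\in M/\ZZ\ell$ satisfy $\sum w_i=0$ and are easily checked to be non-zero and pairwise non-parallel. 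Consequently the only closed $T$-fixed point of $U_\alpha$ is the origin and the only one-dimensional $T$-orbit closures are the four coordinate axes $L_1,\ldots,L_4$, so since $\dim Z \leq 1$ the support of $Z|_{U_\alpha}$ lies in $L_1 \cup \cdots \cup L_4 \subseteq V(y)$, where $y:=x_1 x_2 x_3 x_4$. In particular $y$ is nilpotent on $R_\alpha/I_\alpha$.

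The kernel of the weight map $\ZZ^4\to M/\ZZ\ell$, $a\mapsto \sum a_i w_i$, is exactly $\ZZ\cdot(1,1,1,1)$, so each $T$-weight space $V_\chi\subseteq R_\alpha$ is the free rank-one $\CC[y]$-module $\CC[y]\cdot m_0(\chi)$, where $m_0(\chi)$ is the unique monomial of weight $\chi$ with some coordinate exponent equal to $0$. Since $y$ is $T$-invariant, $I_\alpha \cap V_\chi$ is a $\CC[y]$-submodule of $V_\chi\cong\CC[y]$, hence equal to $p_\chi(y)\cdot m_0(\chi)\cdot \CC[y]$ for some $p_\chi\in\CC[y]$. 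Nilpotency of $y$ gives $y^N\in I_\alpha$ for some $N$, and since $y^N m_0(\chi)\in I_\alpha\cap V_\chi$ one reads off $p_\chi\mid y^N$ in $\CC[y]$; so each $p_\chi$ is (up to a scalar) a pure power of $y$. Therefore $I_\alpha\cap V_\chi$ is spanned by a monomial, $I_\alpha$ is a monomial ideal, and $Z|_{U_\alpha}$ is $(\CC^*)^4$-fixed; gluing across charts, $Z$ is $(\CC^*)^4$-fixed globally. Finiteness then follows since a $(\CC^*)^4$-fixed $[Z]$ is described by a plane partition along each compact edge of $\Delta(X)$ together with finite vertex data at each $p_\alpha$, with plane partitions along non-compact edges forced to be empty by properness of the support, and the fixed $\beta$ and $n$ bound this combinatorial data to a finite set.

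The main obstacle is the reduction from $T$-invariance to $(\CC^*)^4$-invariance inside each affine chart: the only $T$-eigenvectors in $R_\alpha$ beyond monomials are polynomials in $y$, and one must use the geometric input that $y$ vanishes on $\operatorname{supp}(Z)|_{U_\alpha}$ to eliminate these extra degrees of freedom. Both the characterization of low-dimensional $T$-orbit closures and the explicit description of $V_\chi$ rely crucially on the smoothness-plus-Calabi-Yau structure of $X$; weakening either would admit non-monomial $T$-invariant ideals and the conclusion would fail.
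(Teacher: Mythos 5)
Your proof is correct and follows essentially the same route as the paper: both reduce to the charts $U_\alpha\cong\CC^4$, observe that the only non-monomial $T$-eigenvectors are of the form $(\text{monomial})\cdot p(x_1x_2x_3x_4)$, and use the fact that a $T$-invariant subscheme of dimension $\leqslant 1$ must be supported on $\{x_1x_2x_3x_4=0\}$ to eliminate that extra freedom. Your weight-space decomposition combined with divisibility in $\CC[y]$ is just a tidier packaging of the paper's normal form for the generators (it also makes explicit the step, glossed over in the paper, of passing from ``$I|_U$ is monomial'' to ``$I$ is monomial''), and your finiteness sketch matches the combinatorial description via solid partitions given right after the lemma.
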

\begin{proof}
Similarly as in \cite[Lemma 3.1]{CK} ($\beta=0$ case), we cover $X$ by maximal $(\mathbb{C}^*)^4$-invariant open affine subsets $\{U_\alpha \cong \CC^4\}_{\alpha \in V(X)}$ with centres at $(\mathbb{C}^*)^4$-fixed points $p_\alpha$. There exist coordinates $x_1,x_2,x_3,x_4$ on $U_\alpha\cong\mathbb{C}^4$, such that the action of $t\in(\mathbb{C}^*)^4$ on $U_\alpha$ is given by
\begin{equation}
t \cdot x_i = t_i x_i, \quad \textrm{for all } i=1,2,3,4 \, \textrm{ and } \, t=(t_1,t_2,t_3,t_4) \in (\CC^*)^4. \nonumber 
\end{equation}
Then the Calabi-Yau torus is given by
\begin{equation}T=\{t\in(\mathbb{C}^*)^4\textrm{ }|\textrm{ }t_1t_2t_3t_4=1\}  \nonumber \end{equation}
and we see that $U_\alpha$ is also $T$-invariant. Therefore it suffices to prove the lemma for $X = U_\alpha = \mathbb{C}^4$ with the standard torus action. 

The $(\mathbb{C}^*)^4$-invariant ideals in $\mathbb{C}[x_1,x_2,x_3,x_4]$ are precisely the monomial ideals. Clearly 
\begin{equation}I_n(X,\beta)^T \supseteq I_n(X,\beta)^{(\mathbb{C}^*)^4}. \nonumber \end{equation}
By considering the weight of $x^{n_1}_1 x^{n_2}_2 x^{n_3}_3 x^{n_4}_4$ under the action of $t\in(\mathbb{C}^*)^4$, it is easy to see that
any $T$-invariant ideal $I\subseteq \mathbb{C}[x_1,x_2,x_3,x_4]$ is of form
\begin{equation}I=\langle x^{n_{11}}_1 x^{n_{12}}_2 x^{n_{13}}_3 x^{n_{14}}_4 f_1(x_1x_2x_3x_4),\ldots, x^{n_{l1}}_1 x^{n_{l2}}_2 x^{n_{l3}}_3 x^{n_{l4}}_4 f_l(x_1x_2x_3x_4) \rangle, \nonumber \end{equation}
where $\{f_i(y)\}$ are polynomials of one variable with constant coefficient 1 and $n_{ij} \in \mathbb{Z}_{\geqslant 0}$. 

Suppose $I$ is $T$-invariant and corresponds to a one-dimensional subscheme $Z$. 
If $x_1x_2x_3x_4\neq 0$ on $Z$, then $f_i(x_1x_2x_3x_4)=0$ on $Z$ for all $i=1, \ldots l$. 
However such a system of equations cannot cut out a one-dimensional subscheme.
So $x_1x_2x_3x_4=0$ on $Z$ and $f_i(x_1x_2x_3x_4)\neq 0$ on $Z$ for all $i=1, \ldots, l$. Hence the open subset
$$
U =  \{f_1(x_1x_2x_3x_4) \neq 0\} \cap \ldots \cap \{f_l(x_1x_2x_3x_4) \neq 0\}
$$
contains $Z$. The polynomials $f_i(x_1x_2x_3x_4)$ become invertible elements after restriction to $U$, hence
$$
I|_U = \left\langle x^{n_{11}}_1 x^{n_{12}}_2 x^{n_{13}}_3 x^{n_{14}}_4,\ldots, x^{n_{l1}}_1 x^{n_{l2}}_2 x^{n_{l3}}_3 x^{n_{l4}}_4 \right\rangle.
$$
We conclude that
$$
I = \left\langle x^{n_{11}}_1 x^{n_{12}}_2 x^{n_{13}}_3 x^{n_{14}}_4,\ldots, x^{n_{l1}}_1 x^{n_{l2}}_2 x^{n_{l3}}_3 x^{n_{l4}}_4 \right\rangle,
$$
which shows $I_n(X,\beta)^T \subseteq I_n(X,\beta)^{(\mathbb{C}^*)^4}$.
\end{proof}

Similarly to \cite[I, Lem.~6]{MNOP} and \cite[Lem.~3.4, 3.6]{CK}, we have the following.
\begin{lem}\label{Tfixlocus scheme}
For any $Z\in I_n(X,\beta)^T$, we have an isomorphism of $T$-representations
\begin{equation}\Ext^0(I_Z,\oO_Z) \cong \Ext^{1}(I_Z,I_Z).  \nonumber \end{equation}
Moreover, $\Ext^0(I_Z,\oO_Z)^T=0$. In particular, the scheme $I_n(X,\beta)^T=I_n(X,\beta)^{(\mathbb{C}^*)^4}$ consists of finitely many \emph{reduced} points.
\end{lem}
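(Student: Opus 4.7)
The plan is to apply $\Hom(I_Z, -)$ to the canonical short exact sequence $0 \to I_Z \to \oO_X \to \oO_Z \to 0$ and extract the desired isomorphism from the resulting $T$-equivariant long exact sequence. Since $Z$ has dimension at most $1$ in the smooth $4$-fold $X$, its codimension is at least $3$, so $\EXT^i(\oO_Z, \oO_X) = 0$ for $i \leqslant 2$. Dualising the sequence with $\HOM(-, \oO_X)$ then yields $\HOM(I_Z, \oO_X) \cong \oO_X$ and $\EXT^i(I_Z, \oO_X) = 0$ for $i = 1, 2$, and the local-to-global spectral sequence combined with the hypothesis $H^{>0}(\oO_X) = 0$ forces $\Ext^1(I_Z, \oO_X) = 0$ and $\Hom(I_Z, \oO_X) = H^0(\oO_X)$. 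Hence the long exact sequence collapses to
\[
0 \to \Hom(I_Z, I_Z) \to \Hom(I_Z, \oO_X) \to \Hom(I_Z, \oO_Z) \to \Ext^1(I_Z, I_Z) \to 0,
\]
and the first arrow, induced by $I_Z \hookrightarrow \oO_X$, is an isomorphism because every $f \in H^0(\oO_X)$ preserves $I_Z$ by multiplication. This produces the $T$-equivariant isomorphism $\Hom(I_Z, \oO_Z) \cong \Ext^1(I_Z, I_Z)$.

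For the vanishing $\Hom(I_Z, \oO_Z)^T = 0$, I would reduce to a local computation on each maximal $T$-invariant chart $U_\alpha \cong \CC^4$. On $U_\alpha$ the restriction $I_Z|_{U_\alpha}$ is monomial by the argument of Lemma~\ref{Tfixlocus}, and the $T$-character of the local tangent contribution can be extracted from a minimal $T$-equivariant free resolution of the monomial quotient, following \cite[I, Lem.~6]{MNOP} and \cite[Lem.~3.4, 3.6]{CK}. Gluing over the toric cover---incorporating the edge contributions from the invariant lines $C_{\alpha\beta}$ of \eqref{normal}---one expresses the character of $\Hom(I_Z, \oO_Z)$ as a sum of Laurent monomials in $t_1, t_2, t_3, t_4$. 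None of these monomials is trivial modulo the Calabi-Yau relation $t_1 t_2 t_3 t_4 = 1$, so the $T$-invariant part vanishes.

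Reducedness of $I_n(X, \beta)^T$ then follows, because the Zariski tangent space to the fixed locus at $[Z]$ is precisely $\Hom(I_Z, \oO_Z)^T = 0$ and by Lemma~\ref{Tfixlocus} the fixed locus is already zero-dimensional. The main obstacle is the combinatorial character computation in the second paragraph: while the vertex character formula is built into the 4-fold vertex formalism developed later in the paper and is routine once one has the MNOP/CK calculations, extending the analysis from $0$-dimensional (or the 3-fold setup) to genuinely $1$-dimensional monomial subschemes of $\CC^4$, with infinite legs along coordinate axes, requires careful bookkeeping of leg and edge contributions along the toric invariant lines.
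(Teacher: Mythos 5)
Your argument is essentially the route the paper intends: the paper gives no written proof of this lemma, deferring to \cite[I, Lem.~6]{MNOP} and \cite[Lem.~3.4, 3.6]{CK}, and your two steps (the long exact sequence from $0\to I_Z\to\oO_X\to\oO_Z\to 0$ together with $H^{>0}(\oO_X)=0$ for the isomorphism $\Hom(I_Z,\oO_Z)\cong\Ext^1(I_Z,I_Z)$, then the local monomial character analysis for $\Hom(I_Z,\oO_Z)^T=0$) are exactly what those references carry out. Two caveats: your claim that $\EXT^2(I_Z,\oO_X)=0$ is false when $\beta\neq 0$ (it is isomorphic to $\EXT^3(\oO_Z,\oO_X)$, which is nonzero on the one-dimensional part), though harmless since only $\EXT^1(I_Z,\oO_X)=0$ and $H^1(\oO_X)=0$ are needed; and the assertion that no monomial of the character of $\Hom(I_Z,\oO_Z)$ is trivial modulo $t_1t_2t_3t_4=1$ is the actual content of \cite[Lem.~3.6]{CK} and is established there by a Haiman-arrow argument (of the kind this paper reproduces in Step 3 of the proof of Proposition \ref{PTfixedlocus}), not by direct inspection of a free resolution.
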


We characterize the elements $I_n(X,\beta)^{T}$ by collections of so-called \emph{solid partitions}.
\begin{defi}
A solid partition $\pi$ is a sequence $\pi = \{\pi_{ijk} \in \ZZ_{ \geqslant 0} \cup \{\infty\} \}_{i,j,k \geqslant 1}$ satisfying:
$$
\pi_{ijk} \geqslant \pi_{i+1,j,k}, \qquad \pi_{ijk} \geqslant \pi_{i,j+1,k}, \qquad \pi_{ijk} \geqslant \pi_{i,j,k+1} \qquad \forall \,\, i,j,k \geqslant 1.
$$
This extends the notions of a \emph{plane partition} $\lambda = \{ \lambda_{ij}\}_{i,j \geqslant 1}$ (which we visualize as a pile of boxes in $\RR^3$ where $\lambda_{ij}$ is the height along the $x_3$-axis) and (ordinary) \emph{partitions} $\lambda = \{ \lambda_{i}\}_{i \geqslant 1}$ (which we visualize as a pile of squares in $\RR^2$ where $\lambda_i$ is the height along the $x_2$-axis). Given a solid partition $\pi = \{\pi_{ijk}\}_{i,j,k \geqslant 1}$, there exist unique plane partitions $\lambda, \mu, \nu, \rho$ such that
\begin{align*}
&\pi_{ijk} = \lambda_{jk}, \qquad \forall \,\, i \gg 0,\,\, j,k \geqslant 1 \\
&\pi_{ijk} = \mu_{ik}, \qquad \forall \,\, j \gg 0, \,\, i,k \geqslant 1 \\
&\pi_{ijk} = \nu_{ij}, \,\qquad \forall \,\, k \gg 0,\,\,  i,j \geqslant 1 \\
&\pi_{ijk} = \infty \Leftrightarrow \,\, k = \rho_{ij}, \quad \forall \,\, i,j,k \geqslant 1.
\end{align*}
We refer to $\lambda, \mu, \nu, \rho$ as the \emph{asymptotic plane partitions} associated to $\pi$ in directions $1,2,3,4$ respectively. We call $\pi$ \emph{point-like}, when $\lambda = \mu = \nu = \rho =  \varnothing$. Then the \emph{size} of $\pi$ is defined by
$$
|\pi| := \sum_{i,j,k \geqslant 1} \pi_{ijk}.
$$
We call $\pi$ \emph{curve-like} when $\lambda, \mu, \nu, \rho$ have finite size $|\lambda|, |\mu|, |\nu|, |\rho|$ (not all zero). When $\pi$ is curve-like, we define its \emph{renormalized volume} (similar to \cite{MNOP}) as follows
$$
|\pi| := \sum_{(i,j,k,l) \in \mathbb{Z}_{\geqslant 1}^4 \atop l \leqslant \pi_{ijk}} \big( 1 - \#\{\textrm{legs containing } (i,j,k,l) \} \big).
$$
\end{defi}

Let $[Z] \in I_n(X,\beta)^{T}$. Suppose $\CC^4 \cong U \subseteq X$ is a maximal $(\CC^*)^4$-invariant affine open subset. Then there are coordinates $(x_1,x_2,x_3,x_4)$ such that
\begin{equation}
t \cdot x_i = t_i x_i, \quad \textrm{for all } i=1,2,3,4 \, \textrm{ and } \, t=(t_1,t_2,t_3,t_4) \in (\CC^*)^4. \nonumber 
\end{equation}
The restriction $Z|_U$ is cut out by a $(\CC^*)^4$-invariant ideal $I_Z|_U \subseteq \CC[x_1,x_2,x_3,x_4]$. Solid partitions $\pi$ (point- or curve-like) are in bijective correspondence to $(\CC^*)^4$-invariant ideal $I_{Z_\pi} \subseteq \CC[x_1,x_2,x_3,x_4]$ cutting out subschemes $Z_\pi \subseteq \CC^4$ of dimension $\leqslant 1$ via the following formula
\begin{equation} \label{partitionideal}
I_{Z_\pi} = \left\langle x_1^{i-1} x_2^{j-1} x_3^{k-1} x_4^{\pi_{ijk}} \, : \, i,j,k \geqslant 1  \textrm{ such that } \pi_{ijk} < \infty \right\rangle.
\end{equation}
Therefore, $[Z] \in I_n(X,\beta)^{T}$ determines a collection of solid partitions $\{\pi^{(\alpha)}\}_{\alpha = 1}^{e(X)}$, where $e(X)$ is the topological Euler characteristic of $X$, which equals the number of $(\CC^*)^4$-fixed points of $X$. Let $\alpha\beta \in E(X)$ and consider the corresponding $(\CC^*)^4$-invariant line $C_{\alpha\beta} \cong \PP^1$. Suppose this line given by $\{x_2=x_3=x_4=0\}$ in both charts $U_\alpha$, $U_\beta$. Let $\lambda^{(\alpha)}$, $\lambda^{(\beta)}$ be the asymptotic plane partitions of $\pi^{(\alpha)}$, $\pi^{(\beta)}$ along the $x_1$-axes in both charts. Then
\begin{equation} \label{glue}
\lambda^{(\alpha)}_{ij} = \lambda^{(\beta)}_{ij} =: \lambda_{ij}^{(\alpha\beta)} \qquad \forall \,\, i,j \geqslant 1. 
\end{equation}
When a collection of (point- or curve-like) solid partitions $\{\pi^{(\alpha)}\}_{\alpha = 1}^{e(X)}$ satisfies \eqref{glue}, for all $\alpha, \beta = 1, \ldots, e(X)$, we say that it satisfies the \emph{gluing condition}. Therefore, we obtain a bijective correspondence  
\begin{align*}
&\left\{ {\boldsymbol{\pi}} = \{\pi^{(\alpha)}\}_{\alpha = 1}^{e(X)} \, : \,  \pi^{(\alpha)} \textrm{ \, point- or curve-like and satisfying \eqref{glue}} \right\} \\
&\qquad \qquad \qquad \stackrel{1-1}{\longleftrightarrow}  \\  
&Z_{{\boldsymbol{\pi}}} \in \bigcup_{\beta \in H_2(X), \, n \in \ZZ} I_n(X,\beta)^{T}.
\end{align*}

Suppose $\beta \neq 0$ is effective. Then for any $Z \in I_n(X,\beta)^{T}$, there exists a maximal Cohen-Macaulay subscheme $C \subseteq Z$ such that the cokernel in the short exact sequence
$$
0 \rightarrow I_Z \rightarrow I_C \rightarrow I_C / I_Z \rightarrow 0
$$
is 0-dimensional. The restriction $C|_{U_\alpha}$ is empty or corresponds to a curve-like solid partition $\pi$ with asymptotics $\lambda, \mu, \nu, \rho$. Since $C|_{U_\alpha}$ has no embedded points, the solid partition $\pi$ is entirely determined by the asymptotics $\lambda, \mu ,\nu, \rho$ as follows
\begin{equation} \label{CMsolid}
\pi_{ijk} = \left\{\begin{array}{cc} \infty & \textrm{if \, } 1 \leqslant k \leqslant \rho_{ij} \\ \max\{ \lambda_{jk}, \mu_{ik}, \nu_{ij} \}  & \textrm{otherwise.}  \end{array} \right.
\end{equation}

For any plane partition of finite size, and $m,m',m'' \in \ZZ$, we define
$$
f_{m,m',m''}(\lambda) := \sum_{i,j \geqslant 1} \sum_{k=1}^{\lambda_{ij}} (1-m(i-1) - m'(j-1) - m''(k-1)).
$$
For any $\alpha\beta \in E(X)$ and finite plane partition $\lambda$, we set
$$
f(\alpha,\beta) := f_{m_{\alpha\beta},m_{\alpha\beta}',m_{\alpha\beta}''}(\lambda),
$$
where $m_{\alpha\beta}, m_{\alpha\beta}', m_{\alpha\beta}''$ were defined in \eqref{normal}.
\begin{lem} \label{chi}
Let $X$ be a smooth toric Calabi-Yau 4-fold and $Z \subseteq X$ a $(\CC^*)^4$-invariant closed subscheme of dimension $\leqslant 1$. Then
$$
\chi(\oO_Z)  = \sum_{\alpha \in V(X)} |\pi^{(\alpha)}| + \sum_{\alpha\beta \in E(X)} f(\alpha,\beta).
$$
\end{lem}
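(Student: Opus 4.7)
The plan is to adapt the vertex--edge decomposition of \cite[\S 4]{MNOP} from the 3-fold to the 4-fold setting. First I reduce to the Cohen--Macaulay case. Let $C$ be the maximal CM subscheme of $Z$, so that $0 \to I_C/I_Z \to \oO_Z \to \oO_C \to 0$. The $0$-dimensional quotient is $(\CC^*)^4$-invariant and hence supported at some vertices $p_\alpha$. At each $p_\alpha$, its length equals the number of boxes of $\pi^{(\alpha)}$ that lie outside the CM pattern $\pi^{(\alpha)}_{\mathrm{CM}}$ from \eqref{CMsolid}. An inspection of \eqref{CMsolid} shows that any box of $\pi^{(\alpha)}$ contained in one of the four legs already lies in $\pi^{(\alpha)}_{\mathrm{CM}}$; hence every ``embedded'' box lies outside every leg and contributes $1 = 1-0$ to the renormalized volume. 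Therefore
$$
\chi(I_C/I_Z) = \sum_{\alpha \in V(X)} \bigl(|\pi^{(\alpha)}| - |\pi^{(\alpha)}_{\mathrm{CM}}|\bigr),
$$
and the problem reduces to showing $\chi(\oO_C) = \sum_\alpha |\pi^{(\alpha)}_{\mathrm{CM}}| + \sum_{\alpha\beta} f(\alpha,\beta)$.

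For the CM part I would use the $T$-equivariant Euler characteristic $\chi_T$ together with the affine cover $\{U_\alpha\}$. Since $C$ is one-dimensional and the only one-dimensional torus orbits of $X$ are the edge lines $C_{\alpha\beta}$, triple intersections $U_\alpha \cap U_\beta \cap U_\gamma$ do not meet $C$ non-trivially, so the \v Cech formula yields
$$
\chi_T(\oO_C) = \sum_{\alpha \in V(X)} \chi_T\bigl(\oO_C(C \cap U_\alpha)\bigr) \,-\, \sum_{\alpha\beta \in E(X)} \chi_T\bigl(\oO_C(C \cap U_\alpha \cap U_\beta)\bigr),
$$
each summand being a rational function in $(t_1,t_2,t_3,t_4)$. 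Since $C$ is proper, the left-hand side is a polynomial with value $\chi(\oO_C)$ at $t = 1$. At each vertex $\alpha$, the character of $\oO_C(C \cap U_\alpha)$ is the equivariant Hilbert series of $\CC[x_1,x_2,x_3,x_4]/I_{\pi^{(\alpha)}_{\mathrm{CM}}}$, which decomposes by inclusion--exclusion on the legs into a finite ``bulk'' plus single-, double-, triple- and possibly four-leg rational contributions, mirroring exactly the weight $1 - \#\{\text{legs}\}$ in the definition of $|\pi^{(\alpha)}_{\mathrm{CM}}|$. At each edge $\alpha\beta$ the localization contribution cancels the single-leg divergences of its two adjacent vertices and leaves a regular remainder equal to $\chi_T$ of a $T$-equivariant direct sum of line bundles on $\PP^1 \cong C_{\alpha\beta}$ indexed by $(i,j,k) \in \lambda^{(\alpha\beta)}$, with degrees $-m_{\alpha\beta}(i-1) - m'_{\alpha\beta}(j-1) - m''_{\alpha\beta}(k-1)$ determined by the normal bundle splitting \eqref{normal}. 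Summing $\chi(\oO_{\PP^1}(d)) = d+1$ over these boxes gives precisely $f(\alpha,\beta)$.

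The main obstacle is the careful bookkeeping of these cancellations: one must verify that the sum of rational functions on the right of the \v Cech formula is regular at $t = 1$ and then compute the limit explicitly. The gluing condition \eqref{glue} is essential since it guarantees that the single-leg divergent piece of $\pi^{(\alpha)}_{\mathrm{CM}}$ along the edge $\alpha\beta$ matches its counterpart at $\pi^{(\beta)}_{\mathrm{CM}}$ and is absorbed into the corresponding edge term, while the higher-leg corrections collapse at $t = 1$ to the inclusion--exclusion sum defining $|\pi^{(\alpha)}_{\mathrm{CM}}|$. This argument is the direct 4-fold analogue of the MNOP vertex--edge decomposition, with solid partitions replacing plane partitions at vertices and plane partitions replacing ordinary partitions along edges.
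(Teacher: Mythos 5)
Your reduction to the Cohen--Macaulay case is exactly the paper's: the observation that the CM pattern \eqref{CMsolid} is precisely the union of the four legs, so that every embedded box lies in no leg and contributes $1$ to the renormalized volume, reproduces the identity $\chi(I_C/I_Z)=\sum_{\alpha}\bigl(|\pi^{(\alpha)}|-|\pi^{(\alpha)}_{\mathrm{CM}}|\bigr)$ that the paper's proof also relies on. Where you genuinely diverge is in computing $\chi(\oO_C)$. The paper argues geometrically: it writes $C\subseteq\bigcup_{\alpha\beta}\lambda^{(\alpha\beta)}C_{\alpha\beta}$, applies inclusion--exclusion on structure sheaves so that the overlaps near each vertex contribute $\sum(1-\#\{\text{legs}\})=|\pi^{(\alpha)}_{\mathrm{CM}}|$ summed over leg boxes, and computes $\chi(\oO_{\lambda^{(\alpha\beta)}C_{\alpha\beta}})$ as a sum of $\chi(\oO_{\PP^1}(d))=d+1$ over the boxes of $\lambda^{(\alpha\beta)}$, with $d$ read off from the normal bundle splitting \eqref{normal}; this yields $f(\alpha,\beta)$ with no pole analysis at all. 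You instead localize the equivariant character over the \v{C}ech cover $\{U_\alpha\}$ and evaluate at $t=1$, which is the MNOP-style route and is also sound: your claims that triple overlaps miss $C$, that non-adjacent double overlaps contribute nothing, and that the gluing condition \eqref{glue} makes the single-leg divergences of adjacent vertex terms cancel against the edge term are all correct. What that route buys is the full equivariant vertex/edge characters (essentially the machinery the paper deploys later in Section \ref{vertex section} for the obstruction theory), but at the cost of verifying regularity at $t=1$ and computing the limit explicitly --- a bookkeeping step you describe and defer rather than carry out. Since the lemma only needs the numerical Euler characteristic, the paper's elementary inclusion--exclusion is the shorter path to the same answer.
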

\begin{proof}
Denote by $\{\pi^{(\alpha)}\}_{\alpha \in V(X)}$ the collection of solid partitions corresponding to $Z$ and denote the corresponding asymptotic plane partitions by $\{\lambda^{(\alpha\beta)}\}_{\alpha\beta \in E(X)}$. For each $\alpha \in V(X)$, the renormalized volume of $\pi^{(\alpha)}$ is given by
$$
|\pi^{(\alpha)}| = \sum_{(i,j,k,l) \in \pi} \left( 1 - \# \{ \textrm{legs containing } (i,j,k,l) \} \right),
$$
where $(i,j,k,l) \in \pi^{(\alpha)}$ means $i,j,k,l  \in \ZZ_{\geqslant 1}$ and $l \leqslant \pi^{(\alpha)}_{ijk}$.

Suppose $C \subseteq Z$ is the maximal Cohen-Macaulay subcurve of $Z$. Each finite plane partition $\lambda^{(\alpha\beta)}$ determines an associated Cohen-Macaulay curve on $X$, which is the thickening of $C_{\alpha\beta} \cong \PP^1$ according to the plane partitions $\lambda^{(\alpha\beta)}$. We denote this Cohen-Macaulay curve by $\lambda^{(\alpha\beta)} C_{\alpha\beta} \subseteq X$. Its underlying reduced curve is $C_{\alpha\beta} \cong \PP^1$. Then 
$$
C \subseteq \bigcup_{\alpha\beta \in E(X)} \lambda^{(\alpha\beta)} C_{\alpha\beta}
$$ 
and by inclusion-exclusion, we have
$$
\chi(\oO_C) = \sum_{\alpha\beta \in E(X)} \chi(\oO_{\lambda^{(\alpha\beta)} C_{\alpha\beta}}) + \sum_{\alpha \in V(X)} \sum_{(i,j,k,l) \in \pi^{(\alpha)} \atop (i,j,k,l) \textrm{ in a leg of } \pi^{(\alpha)}} (1- \# \{ \textrm{legs containing} \, (i,j,k,l) \}).
$$
Furthermore, we have
\begin{align*}
\chi(\oO_{\lambda^{(\alpha\beta)} C_{\alpha\beta}}) &= \sum_{(i,j,k) \in \lambda^{(\alpha\beta)}} \chi(\oO_{\PP^1} (-(i-1) m_{\alpha\beta}) \otimes \oO_{\PP^1}(-(j-1) m'_{\alpha\beta}) \otimes \oO_{\PP^1}( -(k-1) m''_{\alpha\beta})) \\
&= \sum_{(i,j,k) \in \lambda^{(\alpha\beta)}} (1 - (i-1) m_{\alpha\beta} - (j-1) m'_{\alpha\beta} - (k-1) m''_{\alpha\beta}).
\end{align*}
The lemma follows from the fact that the kernel of $\oO_Z \twoheadrightarrow \oO_C$ equals $I_C / I_Z$ and
\begin{equation*}
\chi(I_C / I_Z) = \sum_{\alpha \in V(X)} \chi(I_{C|_{U_\alpha}} / I_{Z|_{U_\alpha}} ) = \sum_{\alpha \in V(X)} \sum_{(i,j,k,l) \in \pi^{(\alpha)} \atop  (i,j,k,l) \textrm{ not in a leg of } \pi^{(\alpha)}} 1. \hfill \qedhere
\end{equation*}
\end{proof}

\subsection{Fixed loci of moduli spaces of stable pairs} \label{PTfix}

Let $X$ be a toric Calabi-Yau 4-fold. Then the action of $(\CC^*)^4$ lifts to the stable pairs space $P_n(X,\beta)$. We will describe the fixed loci $P_n(X,\beta)^{(\CC^*)^4}$ and $P_n(X,\beta)^{T}$ closely following Pandharipande-Thomas \cite{PT2}. 

Given a stable pair $(F,s)$ on $X$, the scheme-theoretic support $C_F := \mathrm{supp}(F)$ of $F$ is a Cohen-Macaulay curve \cite[Lem.~1.6]{PT1}. Stable pairs with Cohen-Macaulay support curve $C$ can be described as follows \cite[Prop.~1.8]{PT1}: \\

\noindent Let $\mathfrak{m} \subseteq \oO_C$ be the ideal of a finite union of closed points on $C$. A stable pair $(F,s)$ such that $C_F = C$ and $\mathrm{supp}(Q)_{\mathrm{red}} \subseteq \mathrm{supp}(\oO_C / \mathfrak{m})$ is equivalent to a subsheaf of $\varinjlim \HOM(\mathfrak{m}^r, \oO_C) / \oO_C$. \\

This uses the natural inclusions
\begin{align*}
\HOM(\mathfrak{m}^r, \oO_C) \hookrightarrow \HOM(\mathfrak{m}^{r+1}, \oO_C) \\
\oO_C \hookrightarrow \HOM(\mathfrak{m}^r, \oO_C)
\end{align*}
induced by $\mathfrak{m}^{r+1} \subseteq \mathfrak{m}^{r} \subseteq \oO_C$. 

Suppose $[(F,s)] \in P_n(X,\beta)^{(\CC^*)^4}$, then $C_F$ is $(\CC^*)^4$-fixed and therefore determines solid partitions $\{\pi^{(\alpha)}\}_{\alpha \in V(X)}$ as in the previous section. Consider a maximal $(\CC^*)^4$-invariant affine open subset $\CC^4 \cong U_\alpha \subseteq X$. Denote the asymptotic plane partitions of $\pi := \pi^{(\alpha)}$ in directions $1,2,3,4$ by $\lambda, \mu, \nu, \rho$. Just like in \eqref{partitionideal}, these correspond to $(\CC^*)^4$-invariant ideals
\begin{align*}
I_{Z_\lambda} &\subseteq \CC[x_2,x_3,x_4], \\
I_{Z_\mu} &\subseteq \CC[x_1,x_3,x_4], \\
I_{Z_\nu} &\subseteq \CC[x_1,x_2,x_4], \\
I_{Z_\rho} &\subseteq \CC[x_1,x_2,x_3].
\end{align*}
Define the following $\CC[x_1,x_2,x_3,x_4]$-modules
\begin{align*}
M_1 &:= \CC[x_1,x_1^{-1}] \otimes_{\CC} \CC[x_2,x_3,x_4] / I_{Z_\lambda}, \\
M_2 &:= \CC[x_2,x_2^{-1}] \otimes_{\CC} \CC[x_1,x_3,x_4] / I_{Z_\mu}, \\
M_3 &:= \CC[x_3,x_3^{-1}] \otimes_{\CC} \CC[x_1,x_2,x_4] / I_{Z_\nu}, \\
M_4 &:= \CC[x_4,x_4^{-1}] \otimes_{\CC} \CC[x_1,x_2,x_3] / I_{Z_\rho}.
\end{align*}
Then \cite[Sect.~2.4]{PT2}
$$
\varinjlim \HOM(\mathfrak{m}^r, \oO_{C|_{U_\alpha}})  \cong \bigoplus_{i=1}^4 M_i =: M,
$$
where $\mathfrak{m} = \langle x_1, x_2, x_3, x_4 \rangle \subseteq \CC[x_1,x_2,x_3,x_4]$. Each module $M_i$ comes from a ring and therefore has a unit 1, which is homogeneous of degree $(0,0,0,0)$ with respect to the character group $X((\CC^*)^4) = \ZZ^4$. We form the quotient
\begin{equation} \label{Mmod}
M / \langle (1,1,1,1) \rangle.
\end{equation}
Then $(\CC^*)^4$-equivariant stable pairs on $U_\alpha \cong \CC^4$ correspond to $(\CC^*)^4$-invariant $\CC[x_1,x_2,x_3,x_4]$-submodules of \eqref{Mmod}. \\

\noindent \textbf{Combinatorial description $M / \langle (1,1,1,1) \rangle$}. Denote the character group of $(\CC^*)^4$ by $X((\CC^*)^4) = \ZZ^4$. For each module $M_i$, the weights $w \in \ZZ^4$ of its non-zero eigenspaces determine an infinite ``leg'' $\mathrm{Leg}_i \subseteq \ZZ^4$ along the $x_i$-axis. For each weight $w \in \ZZ^4$, introduce four independent vectors $\boldsymbol{1}_w$, $\boldsymbol{2}_w$, $\boldsymbol{3}_w$, $\boldsymbol{4}_w$. Then the $\CC[x_1,x_2,x_3,x_4]$-module structure on $M / \langle (1,1,1,1) \rangle$ is determined by
$$
x_j \cdot \boldsymbol{i}_{w} = \boldsymbol{i}_{w+e_j},
$$
where $i,j=1, \ldots, 4$ and $e_1, \ldots, e_4$ are the standard basis vectors of $\ZZ^4$. Similar to the 3-fold case \cite[Sect.~2.5]{PT2}, we define regions
$$
\mathrm{I}^+ \cup \mathrm{II} \cup \mathrm{III} \cup \mathrm{IV} \cup \mathrm{I}^- = \bigcup_{i=1}^4 \mathrm{Leg}_i  \subseteq \ZZ^4,
$$
as follows:
\begin{itemize}
\item $\mathrm{I}^+$ consists of all weights $w \in \ZZ^4$ with all coordinates non-negative \emph{and} which lie in precisely one leg. If $w \in \mathrm{I}^+$, then the corresponding weight space of $M / \langle (1,1,1,1) \rangle$ is 0-dimensional.
\item $\mathrm{I}^-$ consists of all weights $w \in \ZZ^4$ with at least one negative coordinate. If $w \in \mathrm{I}^-$ is supported in $\mathrm{Leg}_i$, then the corresponding weight space of $M / \langle (1,1,1,1) \rangle$ is 1-dimensional
$$
\CC \cong \CC \cdot \boldsymbol{i}_w \subseteq M / \langle (1,1,1,1) \rangle.
$$
\item $\mathrm{II}$ consists of all weights $w \in \ZZ^4$, which lie in precisely two legs. If $w \in \mathrm{II}$ is supported in $\mathrm{Leg}_i$ and $\mathrm{Leg}_j$, then the corresponding weight space of $M / \langle (1,1,1,1) \rangle$ is 1-dimensional
$$
\CC \cong \CC \cdot \boldsymbol{i}_w \oplus \CC \cdot \boldsymbol{j}_w / \CC \cdot (\boldsymbol{i}_w + \boldsymbol{j}_w) \subseteq M / \langle (1,1,1,1) \rangle.
$$
\item $\mathrm{III}$ consists of all weights $w \in \ZZ^4$, which lie in precisely three legs. If $w \in \mathrm{III}$ is supported in $\mathrm{Leg}_i$, $\mathrm{Leg}_j$, and $\mathrm{Leg}_k$, then the corresponding weight space of $M / \langle (1,1,1,1) \rangle$ is 2-dimensional
$$
\CC^2 \cong \CC \cdot \boldsymbol{i}_w \oplus \CC \cdot \boldsymbol{j}_w \oplus \CC \cdot \boldsymbol{k}_w / \CC \cdot (\boldsymbol{i}_w + \boldsymbol{j}_w + \boldsymbol{k}_w) \subseteq M / \langle (1,1,1,1) \rangle.
$$
\item $\mathrm{IV}$ consists of all weights $w \in \ZZ^4$, which lie in all four legs. If $w \in \mathrm{IV}$, then the corresponding weight space of $M / \langle (1,1,1,1) \rangle$ is 3-dimensional
$$
\CC^3 \cong \CC \cdot \boldsymbol{1}_w \oplus \CC \cdot \boldsymbol{2}_w \oplus \CC \cdot \boldsymbol{3}_w \oplus \CC \cdot \boldsymbol{4}_w / \CC \cdot (\boldsymbol{1}_w + \boldsymbol{2}_w + \boldsymbol{3}_w + \boldsymbol{4}_w) \subseteq M / \langle (1,1,1,1) \rangle.
$$
\end{itemize}

\noindent \textbf{Box configurations}.  A box configuration is a finite collection of weights $B \subseteq \mathrm{II} \cup \mathrm{III} \cup \mathrm{IV} \cup \mathrm{I}^-$ satisfying the following property: \\

\noindent if $w = (w_1,w_2,w_3,w_4) \in \mathrm{II} \cup \mathrm{III} \cup \mathrm{IV} \cup \mathrm{I}^-$ and one of $(w_1-1,w_2,w_3,w_4)$, $(w_1,w_2-1,w_3,w_4)$, $(w_1,w_2,w_3-1,w_4)$, or $(w_1,w_2,w_3,w_4-1)$ lies in $B$ then $w \in B$.\footnote{In J.~Bryan's words: gravity pulls in the $(1,1,1,1)$-direction.} \\

Such a box configuration determines a $(\CC^*)^4$-invariant submodule of $M / \langle (1,1,1,1) \rangle$ and therefore a $(\CC^*)^4$-invariant stable pair on $U_\alpha \cong \CC^4$ with cokernel of length
$$
\#  (B \cap \mathrm{II}) + 2 \cdot \#  (B \cap \mathrm{III}) + 3 \cdot \#  (B \cap \mathrm{IV}) + \#  (B \cap \mathrm{I}^-).
$$

The box configurations defined in this section by no means describe \emph{all} $(\CC^*)^4$-invariant submodules of $M / \langle (1,1,1,1) \rangle$. From now on, we restrict attention to the case where the fixed loci $P_n(X,\beta)^{(\CC^*)^4}$ are at most 0-dimensional for all $n$. Then the restriction of any $[(F,s)] \in P_n(X,\beta)^{(\CC^*)^4}$ to each $U_\alpha$ has a Cohen-Macaulay support curve with at most two asymptotic plane partitions (as we prove below) and is described by a box configuration as above.

\begin{prop} \label{fewlegs}
Suppose $P_n(X,\beta)^{(\CC^*)^4}$ is at most 0-dimensional for all $n \in \ZZ$. Then for any $[(F,s)] \in P_n(X,\beta)^{(\CC^*)^4}$ and any $\alpha \in V(X)$, the Cohen-Macaulay curve $C_F |_{U_\alpha}$ has at most two asymptotic plane partitions.
\end{prop}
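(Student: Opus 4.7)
The plan is to argue by contradiction: assuming that at some $\alpha \in V(X)$ at least three of the asymptotic plane partitions $\lambda, \mu, \nu, \rho$ are nonempty, I will exhibit a positive-dimensional family of $(\CC^*)^4$-fixed stable pairs with the same invariants as $(F,s)$, contradicting the hypothesis that $P_n(X,\beta)^{(\CC^*)^4}$ is $0$-dimensional.

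I work locally on $\CC^4 \cong U_\alpha \subseteq X$ and use the combinatorial translation established above: $(\CC^*)^4$-equivariant stable pairs with Cohen-Macaulay support curve $C|_{U_\alpha}$ are in bijection with finite-length $(\CC^*)^4$-invariant $\CC[x_1,x_2,x_3,x_4]$-submodules $Q \subseteq M/\langle (1,1,1,1) \rangle$, and the length of $Q$ computes the cokernel length of $s|_{U_\alpha}$. After relabelling I may assume $\lambda,\mu,\nu$ are all nonempty, so in particular $\lambda_{11}, \mu_{11}, \nu_{11} \geqslant 1$. Then the weight $w^{\star} = (0,0,0,0)$ lies simultaneously in legs $1,2,3$, so $w^\star \in \mathrm{III}$ and its weight space $M_{w^{\star}}$ is $2$-dimensional; more generally, $\mathrm{III}$ is nonempty.

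The decisive step is to locate a weight $w^{\star} \in \mathrm{III}$ (or $\mathrm{IV}$ in the four-leg case) at which $Q_{w^{\star}} \subseteq M_{w^{\star}}$ can be freely rotated without violating the submodule condition. Concretely, I would choose $w^{\star}$ so that for each $i=1,2,3,4$ the multiplication map $x_i : M_{w^{\star}} \to M_{w^{\star}+e_i}$ either vanishes identically or lands inside $Q_{w^{\star}+e_i}$ equal to the full weight space (imposing no constraint on $Q_{w^{\star}}$ from above), and dually the preimages $x_i^{-1}(Q_{w^{\star}}) \subseteq M_{w^{\star}-e_i}$ are already contained in the predetermined $Q_{w^{\star}-e_i}$ coming from the original box configuration. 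When such a $w^\star$ is found, the set of admissible $Q_{w^\star}$ is precisely the Grassmannian $\mathrm{Gr}(1, M_{w^{\star}}) \cong \PP^1$, yielding a $\PP^1$-family of $(\CC^*)^4$-fixed stable pairs of constant colength $n$. The four-leg case is analogous but uses the $3$-dimensional weight space at a weight in $\mathrm{IV}$, producing a $\PP^2$-family.

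The main obstacle is the existence of such a ``free'' weight $w^{\star}$, which requires a careful combinatorial analysis of how the $\CC[x_1,x_2,x_3,x_4]$-module structure on $M/\langle(1,1,1,1)\rangle$ couples adjacent weight spaces. This is the direct $4$-fold analogue of the $3$-fold vertex analysis in Pandharipande--Thomas \cite[Sect.~2.5]{PT2}, where three nonempty asymptotic partitions at a $3$-fold vertex produce positive-dimensional components of the $(\CC^*)^3$-fixed locus parametrized by suitable Quot schemes. In the $4$-fold setting the existence of weight spaces of dimension $\geqslant 2$ in the overlap regions $\mathrm{III}, \mathrm{IV}$ makes the dimension count only more favourable: it suffices to exhibit one such free weight, after which the $0$-dimensionality hypothesis is contradicted and the proposition follows. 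The remaining work is to carry out this combinatorial verification, which proceeds by selecting $w^\star$ maximally or minimally with respect to an appropriate partial order on $\mathrm{III} \cup \mathrm{IV}$ so that one of the two directions (multiplications or their preimages) becomes vacuous.
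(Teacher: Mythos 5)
Your overall strategy is the same as the paper's: assume three or four asymptotic plane partitions are non-empty at some chart and produce a positive-dimensional family of $(\CC^*)^4$-fixed stable pairs, contradicting the $0$-dimensionality hypothesis (which, crucially, is assumed for \emph{all} $n$, so the family need not preserve the colength of the given pair). The combinatorial framework you set up — weight spaces of $M/\langle(1,1,1,1)\rangle$, the regions $\mathrm{III}$ and $\mathrm{IV}$ with $2$- and $3$-dimensional weight spaces — is also the right one.

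However, your proof has a genuine gap exactly where you flag it: the existence of a ``free'' weight $w^\star$ inside the \emph{given} box configuration $Q$ at which $Q_{w^\star}$ can be rotated is never established, and it is not clear it can be. For instance, if $Q=0$ (the pair is the pure Cohen-Macaulay curve), there is no weight at which a line can be inserted while keeping the submodule condition, since one would need $x_j\cdot V_{w^\star}\subseteq Q_{w^\star+e_j}=0$ for all $j$ simultaneously; your condition that each $x_j$ ``vanishes identically or lands inside the full weight space'' cannot then be arranged by choosing $w^\star$ alone. The paper avoids this entirely by \emph{not} deforming the given pair: it writes down an explicit independent family, namely $V_0\subseteq M_0$ an arbitrary line in the $2$- (resp.\ $3$-)dimensional weight space at the origin, $V_w=(M/\langle(1,1,1,1)\rangle)_w$ the \emph{full} weight space for every other $w$ with all coordinates non-negative, and $V_w=0$ otherwise. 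The submodule condition is then automatic (nothing maps into $V_0$ from below since all $w-e_j$ have a negative coordinate, and everything above $V_0$ is the full weight space), and varying $V_0$ gives a $\PP^1$ or $\PP^2$ of fixed stable pairs. To repair your argument you should replace the search for a free weight in $Q$ by this explicit construction (or prove the existence of your $w^\star$, which would require filling in full weight spaces above it anyway — at which point you have reproduced the paper's family).
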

\begin{proof}
Suppose there is an $\alpha \in V(X)$, such that the module $M / \langle (1,1,1,1) \rangle$ associated to $C_F |_{U_\alpha}$, for some $\alpha \in V(X)$, has three or four non-empty asymptotic plane partitions. \\

\noindent \textbf{Case 1:} $C_F |_{U_\alpha}$ has four non-empty asymptotic plane partitions. Construct the following $(\CC^*)^4$-invariant submodule of $M / \langle (1,1,1,1) \rangle$. At $w=(0,0,0,0)$, choose any 1-dimensional subspace 
$$
V_0 \subseteq \CC \cdot \boldsymbol{1}_0 \oplus \CC \cdot \boldsymbol{2}_0 \oplus \CC \cdot \boldsymbol{3}_0 \oplus \CC \cdot \boldsymbol{4}_0 / \CC \cdot (\boldsymbol{1}_0 + \boldsymbol{2}_0 + \boldsymbol{3}_0 + \boldsymbol{4}_0)
$$ 
and for any $w \in (\mathrm{IV} \cup \mathrm{III} \cup \mathrm{II} \cup \mathrm{I}^-) \setminus \{0\}$, we define 
$$
V_w := \left\{ \begin{array}{cc} (M / \langle (1,1,1,1) \rangle)_w & \textrm{if } w_1,w_2,w_3,w_4 \geqslant 0 \\ 0 & \textrm{otherwise} \end{array} \right.
$$
This determines a $(\CC^*)^4$-invariant submodule of $M / \langle (1,1,1,1) \rangle$. By varying $V_0$ over all possible 1-dimensional subspaces, we obtain a connected component of $\bigcup_n P_n(X,\beta)^{(\CC^*)^4}$ with $\PP^2$ as its underlying reduced variety. Contradiction. \\

\noindent \textbf{Case 2:} $C_F |_{U_\alpha}$ has three non-empty asymptotic plane partitions, say in directions 1,2,3. The same strategy as above works, this time choosing a 1-dimensional subspace 
$$
V_0 \subseteq \CC \cdot \boldsymbol{1}_0 \oplus \CC \cdot \boldsymbol{2}_0 \oplus \CC \cdot \boldsymbol{3}_0 / \CC \cdot (\boldsymbol{1}_0 + \boldsymbol{2}_0 + \boldsymbol{3}_0)
$$ 
and setting $V_w = (M / \langle (1,1,1,1) \rangle)_w$ for all $w  \in (\mathrm{III} \cup \mathrm{II} \cup  \mathrm{I}^-) \setminus \{0\}$ with $w_1,w_2,w_3,w_4 \geqslant 0$, and $V_w=0$ otherwise. We obtain a connected component of $\bigcup_n P_n(X,\beta)^{(\CC^*)^4}$ with underlying reduced variety $\PP^1$. Contradiction.
\end{proof}

\begin{prop}  \label{PTfixedlocus}
Suppose $P_n(X,\beta)^{(\CC^*)^4}$ is at most 0-dimensional for all $n \in \ZZ$. Then, as schemes, $P_n(X,\beta)^{T} = P_n(X,\beta)^{(\CC^*)^4}$ consists of finitely many reduced points for all $n \in \ZZ$.
\end{prop}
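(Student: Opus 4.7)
The plan is to follow the two-step argument used for the Hilbert scheme in Lemmas \ref{Tfixlocus} and \ref{Tfixlocus scheme}, combined with the local description from Proposition \ref{fewlegs}.

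First, to establish the set-theoretic equality $P_n(X,\beta)^T = P_n(X,\beta)^{(\CC^*)^4}$, I would argue as follows. Given $[(F,s)] \in P_n(X,\beta)^T$, its scheme-theoretic support $C_F$ is a Cohen-Macaulay curve that is $T$-invariant, hence $(\CC^*)^4$-invariant by the monomial-ideal argument of Lemma \ref{Tfixlocus}. By Proposition \ref{fewlegs}, the restriction $C_F|_{U_\alpha}$ has at most two non-empty asymptotic plane partitions at each $\alpha \in V(X)$. The remaining stable pair data corresponds, chart by chart, to a finite-length $T$-invariant submodule $N$ of $M/\langle(1,1,1,1)\rangle$ via the Pandharipande--Thomas description recalled in Section \ref{PTfix}. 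I then claim such $N$ is automatically $(\CC^*)^4$-invariant: with regions $\mathrm{III}$ and $\mathrm{IV}$ absent, each $(\CC^*)^4$-weight space of $M/\langle(1,1,1,1)\rangle$ is at most one-dimensional, and for any element $\eta = \sum_c v_{w+c(1,1,1,1)} \in N$ decomposed into pure $(\CC^*)^4$-weight components, sufficiently high multiplication by the generators $x_j$ eventually annihilates all but one of the $v_{w+c(1,1,1,1)}$ (because the asymptotic plane partitions are finite). This places a scalar multiple of the surviving component in $N$, and the remaining summands are then extracted recursively.

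Second, for reducedness, I would compute the $T$-fixed Zariski tangent directly, in the spirit of Lemma \ref{Tfixlocus scheme}. The Zariski tangent space to $P_n(X,\beta)$ at $[I^\bullet]$ is $\Ext^1_X(I^\bullet, I^\bullet)_0$, a $(\CC^*)^4$-representation, and that of $P_n(X,\beta)^T$ is its $T$-fixed part. Applying the chart-by-chart weight-separation of Step 1 to first-order deformations (viewed as $T$-equivariant stable pairs on $X \times \Spec \CC[\epsilon]/(\epsilon^2)$) gives
\begin{equation*}
\Ext^1_X(I^\bullet, I^\bullet)_0^{T} = \Ext^1_X(I^\bullet, I^\bullet)_0^{(\CC^*)^4}.
\end{equation*}
The right-hand side is the tangent to the $(\CC^*)^4$-fixed locus at $[I^\bullet]$, which vanishes since that locus has $[I^\bullet]$ as an isolated reduced point (reducedness following as in Lemma \ref{Tfixlocus scheme}, by checking that no $(\CC^*)^4$-character of $\Ext^1$ is trivial via a direct character computation analogous to the Hilbert case). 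Hence both fixed loci coincide as 0-dimensional reduced schemes.

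The main obstacle is the separation argument in Step 1: showing that any finite-length $T$-invariant submodule of $M/\langle(1,1,1,1)\rangle$ splits as a direct sum of $(\CC^*)^4$-weight spaces. The key inputs are the absence of regions $\mathrm{III}$ and $\mathrm{IV}$ (so $(\CC^*)^4$-weight spaces are one-dimensional) and the finiteness of the asymptotic plane partitions (so monotonicity bounds the diagonal shifts $w \mapsto w+c(1,1,1,1)$ that can stay in the support of $M/\langle(1,1,1,1)\rangle$). In the 2-leg case, extra care is needed for weights in region $\mathrm{II}$, whose $(\CC^*)^4$-weight spaces are equivalence classes $\CC \cdot \boldsymbol{i}_w \oplus \CC \cdot \boldsymbol{j}_w / \CC(\boldsymbol{i}_w + \boldsymbol{j}_w)$ rather than free one-dimensional subspaces; here one has to verify that the $\CC[x_1,\dots,x_4]$-action is still rich enough to distinguish the two summands in each such weight space.
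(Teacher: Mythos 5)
Your overall architecture (reduce to charts, use Proposition \ref{fewlegs} to rule out regions $\mathrm{III}$ and $\mathrm{IV}$, then argue weight-separation) is reasonable, but the key step in your Step 1 does not work as described, and it is precisely where the paper does its real work. You claim that for a $T$-weight vector $\eta = \sum_c v_{w+c(1,1,1,1)} \in N$, multiplying by a suitable monomial $x^u$ kills all but one component and thereby ``places a scalar multiple of the surviving component in $N$.'' It does not: it places $x^u \cdot v_{w+c_0(1,1,1,1)}$ in $N$, which is a basis vector at the \emph{shifted} weight $w+c_0(1,1,1,1)+u$, not at $w+c_0(1,1,1,1)$. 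Since the module action is not invertible, you cannot divide by $x^u$ to recover $v_{w+c_0(1,1,1,1)}$ itself, so you never produce an element of $N$ that you can subtract from $\eta$, and the ``recursive extraction'' cannot start. (A concrete test case: take a single leg with $\lambda$ the $2\times2\times2$ block and $\eta = v_{(-2,0,0,0)} + v_{(-1,1,1,1)}$ in $M/\langle 1\rangle$; the separation does eventually succeed there, but via the diagonal monomial $x_1x_2x_3x_4$, which maps the first component onto the weight space of the second while annihilating the second --- a mechanism quite different from the one you describe, and one whose general validity requires analyzing the nilpotent action of $x^{(1,1,1,1)}$ on each $T$-weight space, including the region-$\mathrm{II}$ identifications you flag at the end.) Your Step 2 then inherits this gap (it invokes the same separation for first-order deformations) and in addition defers the reducedness of $P_n(X,\beta)^{(\CC^*)^4}$ to an unspecified ``direct character computation''; note the hypothesis only gives that this locus is $0$-dimensional, not reduced, so that computation is genuinely needed.

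The paper avoids the direct submodule analysis entirely and works infinitesimally: the Zariski tangent space at $[I^\mdot]$ is $\Hom_X(I^\mdot,F)$, which by a \v{C}ech/restriction argument reduces to $\bigoplus_\alpha \Hom_{U_\alpha}(Q_\alpha, M/F_\alpha)$. The $(\CC^*)^4$-fixed part vanishes for the easy reason that $Q_\alpha$ and $M/F_\alpha$ have disjoint $(\CC^*)^4$-weight supports once at most two legs meet (this is your ``direct character computation,'' and it gives reducedness of the $(\CC^*)^4$-fixed locus). The $T$-fixed part is the hard case: a nonzero $T$-invariant $\phi$ decomposes into pieces of weight $n(1,1,1,1)$ with $n<0$, and the paper derives a contradiction by evaluating $\phi$ on a monomial of a box of $Q_\alpha$ chosen with extremal coordinates (minimal $w_1$, maximal $w_2$). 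That extremal argument is the substantive content your proposal is missing; the separation of $T$-weight spaces into $(\CC^*)^4$-weight spaces is equivalent in difficulty to it, not a shortcut around it.
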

\begin{proof}
Suppose $[I^\mdot = \{ \oO_X \rightarrow F\}] \in P_n(X,\beta)^{(\CC^*)^4}$. As before, we denote the open cover by maximal $(\CC^*)^4$-invariant affine open subsets by $\{U_\alpha\}_{\alpha \in V(X)}$. Let $I_\alpha^\mdot := I^\mdot|_{U_\alpha}$, $F_\alpha = F|_{U_\alpha}$, and $Q_\alpha = Q|_{U_\alpha}$ (where $Q$ is the cokernel of the stable pair). The proof consists of three parts, which closely follows \cite[Sect.~3.1, 3.2]{PT2}, but now one dimension higher. \\

\noindent \textbf{Step 1:}  The restriction map
\begin{equation} \label{restr}
\Hom_X(I^\mdot,F) \rightarrow \bigoplus_{\alpha \in V(X)} \Hom_{U_\alpha}(I_\alpha^\mdot,F_\alpha)
\end{equation}
is an isomorphism after taking $(\CC^*)^4$-fixed parts. This follows from sequence \cite[(3.2)]{PT2} and the fact that $\Hom_{U_\alpha}(I_{C_\alpha}, F_\alpha)^{(\CC^*)^4} = 0$. The latter vanishing is obvious because there are no $(\CC^*)^4$-weights $w$ such that the corresponding weight spaces $(I_{C_\alpha})_w$ and $(F_\alpha)_w$ are \emph{both} non-zero.

The restriction map \eqref{restr} is \emph{also} an isomorphism after taking $T$-fixed parts. This requires the less obvious fact that $\Hom_{U_\alpha}(I_{C_\alpha}, F_\alpha)^{T} = 0$. The latter vanishing follows from the analog of \cite[Lem.~2]{PT2}, the proof of which immediately adapts to our setting (and is very similar to the manipulation with Haiman arrows detailed in \cite[Lem.~3.6]{CK}). \\

\noindent \textbf{Step 2:} There are isomorphisms $$\Hom_{U_\alpha}(I_\alpha^\mdot,F_\alpha)^{(\CC^*)^4} \cong \Ext_{U_\alpha}^1(Q_\alpha, F_\alpha)^{(\CC^*)^4} \cong \Hom_{U_\alpha}(Q_\alpha, M / F_\alpha)^{(\CC^*)^4}$$ by \cite[(3.2) \& Lem.~1]{PT2}. Since we have at most two legs coming together (Proposition \ref{fewlegs}), there are no $(\CC^*)^4$-weights $w$ such that the weight spaces $(Q_\alpha)_w$ and $(M / F_\alpha)_w$ are both non-zero. Therefore $$\Hom_{U_\alpha}(Q_\alpha, M / F_\alpha)^{(\CC^*)^4} = 0.$$ Note that we do not require the analogue of \cite[Prop.~4]{PT2}, which deals with the more complicated case of labelled boxes (boxes with moduli). Hence $\Hom_X(I^\mdot,F)^{(\CC^*)^4} = 0$ and $P_n(X,\beta)^{(\CC^*)^4}$ consists of finitely many reduced points. \\

\noindent \textbf{Step 3:} For the Calabi-Yau torus $T \subseteq (\CC^*)^4$, we also have $$\Hom_{U_\alpha}(I_\alpha^\mdot,F_\alpha)^{T} \cong \Ext_{U_\alpha}^1(Q_\alpha, F_\alpha)^{T} \cong \Hom_{U_\alpha}(Q_\alpha, M / F_\alpha)^{T}$$ by \cite[Lem.~3]{PT2}. Decompose $\Hom_{U_\alpha}(Q_\alpha, M / F_\alpha)^{T}$ with respect to the 1-dimensional torus $(\CC^*)^4 / T \cong \CC^*$. Suppose there exists a non-zero homogeneous morphism $\phi \in \Hom_{U_\alpha}(Q_\alpha, M / F_\alpha)^{T}$ of weight $n \in \ZZ$. Since we have at most two legs coming together (Corollary \ref{fewlegs}), we must have $n<0$. Since $\phi$ is assumed non-zero, it sends some monomial $x_1^{w_1}x_2^{w_2}x_3^{w_3}x_4^{w_4}$ of a box of $Q_\alpha$ to $\lambda x_1^{w_1+n}x_2^{w_2+n}x_3^{w_3+n}x_4^{w_4+n} \in M / F_\alpha$ for $w$ in some leg, say $\mathrm{Leg}_1$, and some $\lambda \neq 0$. Consider such a $x_1^{w_1}x_2^{w_2}x_3^{w_3}x_4^{w_4}$ with \emph{minimal} $w_1$ and \emph{maximal} $w_2$. Case 1: $Q_\alpha$ supports a box at $(w_1+n,w_2+n+1,w_3+n,w_4+n)$. This is not possible by minimality of $w_1$ (recall $n<0$). Case 2: $Q_\alpha$ does not support a box at $(w_1+n,w_2+n+1,w_3+n,w_4+n)$. Then
\begin{align*}
\lambda x_1^{w_1+n}x_2^{w_2+n+1}x_3^{w_3+n}x_4^{w_4+n} &= x_2 \cdot \phi(x_1^{w_1}x_2^{w_2}x_3^{w_3}x_4^{w_4} ) \\
&= \phi(x_1^{w_1}x_2^{w_2+1}x_3^{w_3}x_4^{w_4} ) \\
&= \phi(0) = 0,
\end{align*}
where $x_1^{w_1}x_2^{w_2+1}x_3^{w_3}x_4^{w_4} = 0 \in Q_\alpha$ by maximality of $w_2$. This is a contradiction because $(w_1+n,w_2+n+1,w_3+n,w_4+n)$ does not support a box of $Q_\alpha$, so $0 \neq x_1^{w_1+n}x_2^{w_2+n+1}x_3^{w_3+n}x_4^{w_4+n} \in M / F_\alpha$.
\end{proof}

\subsection{Equivariant invariants}

Following \cite{CL, CK, CMT2}, we study curve counting and stable pair invariants of toric Calabi-Yau 4-folds $X$. These are defined by $T$-localization, because $X$ is not proper and therefore $I_n(X,\beta)$ is in general not proper either. 

Let $\bullet$ denote $\Spec \mathbb{C}$ with trivial $(\mathbb{C}^*)^4$-action. We denote by $\mathbb{C} \otimes t_i$ the 1-dimensional $(\mathbb{C}^*)^4$-representation with weight $t_i$ and we write $\lambda_i \in H_{(\mathbb{C}^*)^4}^{\ast}(\bullet)$ for its $(\mathbb{C}^*)^4$-equivariant first Chern class. Then
\begin{align*}
H^*_{(\mathbb{C}^*)^4}(\bullet)=\mathbb{Z}[\lambda_1, \lambda_2, \lambda_3,\lambda_4],  \end{align*} 
\begin{align*}
H^*_{T}(\bullet)=\mathbb{Z}[\lambda_1, \lambda_2, \lambda_3,\lambda_4]/\langle \lambda_1+\lambda_2+\lambda_3+\lambda_4 \rangle
\cong \mathbb{Z}[\lambda_1, \lambda_2,\lambda_3]. 
\end{align*} 
Recall that $T \subseteq (\CC^*)^4$ denotes the Calabi-Yau torus, which is defined as the subtorus preserving the Calabi-Yau volume form. Note that the Serre duality pairing $\Ext^i(E,E) \cong \Ext^{4-i}(E,E)^*$ is a $T$-equivariant isomorphism for any $T$-equivariant coherent sheaf $E$ on $X$ and any $i$. 

For each of the finitely many fixed points $Z \in I_n(X,\beta)^T = I_n(X,\beta)^{(\CC^*)^4}$ (Lemma \ref{Tfixlocus scheme}), one can form a complex vector bundle
\begin{equation}
\begin{array}{lll}
      & \quad ET\times_{T}\Ext^{i}(I_Z,I_Z)
      \\  &  \quad \quad\quad \quad \downarrow \\   &   \quad  ET\times_{T}\{I_Z\}=BT
\end{array}\textrm{ }\textrm{for}\textrm{ } i=1,2, \nonumber\end{equation}
whose Euler class is the $T$-equivariant Euler class $e_T\big(\Ext^{i}(I_Z,I_Z)\big)$.

When $i=2$, the Serre duality pairing on $\Ext^{2}(I_Z,I_Z)$ induces a non-degenerate quadratic form $Q$ on $ET\times_{T}\Ext^{2}(I_Z,I_Z)$, because 
$T$ preserves the Calabi-Yau volume form. We define 
\begin{equation}\label{half euler class equivariant}e_T\big(\Ext^{2}(I_Z,I_Z),Q\big)\in\mathbb{Z}[\lambda_1,\lambda_2,\lambda_3] \end{equation}
as the \textit{half Euler class}\,\footnote{I.e.~the Euler class of its positive real form, which is a half rank real subbundle on which $Q$ is real and positive definite.} of $(ET\times_{T}\Ext^{2}(I_Z,I_Z),Q)$, which satisfies 
\begin{align}\label{hal eul cla}(-1)^{\frac{1}{2}\ext^{2}(I_Z,I_Z)}e_T\big(\Ext^{2}(I_Z,I_Z)\big)=\Big(e_T\big(\Ext^{2}(I_Z,I_Z),Q\big)\Big)^2. \end{align}
The half Euler class \eqref{half euler class equivariant} depends on 
the choice of orientation on a positive real form, or ---equivalently--- a choice of square root of (\ref{hal eul cla}), i.e. 
\begin{align*}e_T\big(\Ext^{2}(I_Z,I_Z),Q\big)=\pm\sqrt{(-1)^{\frac{1}{2}\ext^{2}(I_Z,I_Z)}e_T\big(\Ext^{2}(I_Z,I_Z)\big)}\in\mathbb{Z}[\lambda_1,\lambda_2,\lambda_3]. \end{align*}
Similar to \cite[Sect.~3]{CK}, we define equivariant curve counting invariants as follows.
\begin{defi}\label{def of equi curve counting inv} 
Let $X$ be a toric Calabi-Yau 4-fold and $\beta \in H_2(X)$. For $\gamma_1, \ldots, \gamma_m \in H^*_T(X,\QQ)$, we define
\begin{align*}I_{n,\beta}(X;T)(\gamma_1, \ldots, \gamma_m)_{o(\lL)} :=&\sum_{Z\in I_n(X,\beta)^T}
(-1)^{o(\lL)|_Z}\frac{\sqrt{(-1)^{\frac{1}{2}\ext^{2}(I_Z,I_Z)}e_T\big(\Ext^{2}(I_Z,I_Z)\big)}}{e_T\big(\Ext^{1}(I_Z,I_Z)\big)}\cdot
\prod_{i=1}^{m}\tau(\gamma_i)|_{Z} \\
\in&\frac{\QQ(\lambda_1,\lambda_2,\lambda_3,\lambda_4)}{\langle \lambda_1+\lambda_2+\lambda_3+\lambda_4\rangle} \cong \QQ(\lambda_1,\lambda_2,\lambda_3), \end{align*}
where $o(\lL)|_Z$ denotes a choice of sign for each $Z\in I_n(X,\beta)^T$ and $\tau$ is the insertion (\ref{insert}). 
We often drop $o(\lL)$ from the notation. 
When there is no insertion, we write $I_{n,\beta}(X;T)$ for the invariant. We also form the generating series
$$
I_{\beta}(X;T)(\gamma_1, \ldots, \gamma_m):= \sum_{n \in \ZZ} I_{n,\beta}(X;T)(\gamma_1, \ldots, \gamma_m)\, q^n  \in \QQ(\lambda_1,\lambda_2,\lambda_3)(\!(q)\!).
$$
\end{defi}
\begin{rmk}
The number of choices of orientations in the previous definition is $2^{\# I_{n,\beta}(X)^T}$.
\end{rmk}

There is a parallel story for stable pairs. 
For $[I^\mdot = \{\oO_X \rightarrow F\}] \in P_n(X,\beta)^{(\CC^*)^4}$, we can define the equivariant Euler classes $e_T\big(\Ext^{i}(I^\mdot,I^\mdot)\big)$ and a square root
\begin{align*}\sqrt{(-1)^{\frac{1}{2}\ext^{2}(I^\mdot,I^\mdot)}e_T\big(\Ext^{2}(I^\mdot,I^\mdot)\big)}\in\mathbb{Z}[\lambda_1,\lambda_2,\lambda_3]. \end{align*}

We define equivariant stable pair invariants when $P_n(X,\beta)^{(\CC^*)^4}$ is at most 0-dimensional for all $n \in \ZZ$. Then $P_n(X,\beta)^{T} = P_n(X,\beta)^{(\CC^*)^4}$ consists of finitely many reduced points (Proposition \ref{PTfixedlocus}). Recall that this happens in  several interesting examples, e.g.~when $X$ is a local toric curve or surface.
\begin{defi}\label{def of equi pair inv} 
Let $X$ be a toric Calabi-Yau 4-fold and let $\beta \in H_2(X)$. Suppose $P_n(X,\beta)^{(\CC^*)^4}$ is at most 0-dimensional for all $n \in \ZZ$. For $\gamma_1, \ldots, \gamma_m \in H^*_T(X)$, we define
\begin{align*}P_{n,\beta}(X;T)(\gamma_1, \ldots, \gamma_m)_{o(\lL)}:=&\sum_{[I^\mdot]\in P_n(X,\beta)^T}
(-1)^{o(\lL)|_{[I^\mdot]}}\frac{\sqrt{(-1)^{\frac{1}{2}\ext^{2}(I^{\mdot},I^{\mdot})}e_T\big(\Ext^{2}(I^{\mdot},I^{\mdot})\big)}}{e_T\big(\Ext^{1}(I^{\mdot},I^{\mdot})\big)}\cdot
\prod_{i=1}^{m}\tau(\gamma_i)|_{[I^\mdot]} \\
\in&\frac{\QQ(\lambda_1,\lambda_2,\lambda_3,\lambda_4)}{\langle \lambda_1+\lambda_2+\lambda_3+\lambda_4 \rangle} \cong \QQ(\lambda_1, \lambda_2,\lambda_3), \end{align*}
where $o(\lL)|_{[I^\mdot]}$ denotes a choice of sign for each $[I^\mdot]\in P_n(X,\beta)^T$ and $\tau$ is the insertion (\ref{insertion for pt}). 
We often drop $o(\lL)$ from the notation. 
When there is no insertion, we write $P_{n,\beta}(X;T)$ for the invariant. We also form generating series
$$
P_{\beta}(X;T)(\gamma_1, \ldots, \gamma_m):= \sum_{n \in \ZZ} P_{n,\beta}(X;T)(\gamma_1, \ldots, \gamma_m)\, q^n  \in \QQ(\lambda_1,\lambda_2,\lambda_3)(\!(q)\!).
$$
\end{defi}

\begin{rmk}
Unlike the compact case, where $n$ is the virtual dimension, it is \emph{not} true in general that $I_{n,\beta}(X;T)(\gamma_1, \ldots, \gamma_m)$ and $P_{n,\beta}(X;T)(\gamma_1, \ldots, \gamma_m)$ are zero when $n<0$. E.g.~for $X=\mathrm{Tot}_{\PP^1 \times \PP^1}(\oO(-1,-1) \oplus \oO(-1,-1))$, $\beta = (3,2)$, $n=-1$, we have $P_{n,\beta}(X;T) = 0$ only for $2^6$ out of $2^{12}$ possible choices of signs.
\end{rmk}

\subsection{Vertex formalism}\label{vertex section}

We now develop a vertex formalism, which reduces the calculation of invariants of the previous section to $X = \CC^4$. In the case of equivariant curve counting invariants, we closely follow the 3-fold case developed by Maulik-Nekrasov-Okounkov-Pandharipande \cite{MNOP}. For $\beta=0$, this was carried out in \cite{CK, NP}. In the case of equivariant stable pair invariants, we follow the 3-fold case developed by Pandharipande-Thomas \cite{PT2}.

Let $X$ be a toric Calabi-Yau 4-fold and consider the cover $\{U_\alpha\}_{\alpha \in V(X)}$ by maximal $(\CC^*)^4$-invariant affine open subsets. Let $E = I_Z$, with $Z \in I_n(X,\beta)^{(\CC^*)^4}$ or $E = [I^\mdot] \in P_n(X,\beta)^{(\CC^*)^4}$. For each $\alpha \in V(X)$, the scheme $Z|_{U_\alpha}$ corresponds to a solid partition and $I^\mdot |_{U_\alpha} = \{\oO_{U_\alpha} \rightarrow F|_{U_\alpha}\}$ corresponds to a box configuration as described in Sections \ref{DTfix} and \ref{PTfix}. When $E = I^\mdot$, we assume that, for all $\alpha \in V(X)$, the Cohen-Macaulay support curve $C|_{U_\alpha}$ is described by a solid partition with at most two non-empty asymptotic plane partitions. We want to calculate
$$
-\dR\mathrm{Hom}(E,E)_0 \in K_T(\bullet).
$$
In fact, we calculate the class of this complex in $K_{(\CC^*)^4}(\bullet)$. We will use the exact triangle
\begin{equation} \label{ET}
E \rightarrow \oO_X \rightarrow E',
\end{equation}
where $E' = \oO_Z$ when $E = I_Z$, and $E' = F$ when $E = I^\mdot$. In either case, $E'$ is 1-dimensional. Let $\Gamma(-)$ denote the global section functor. Let $U_{\alpha\beta} = U_\alpha \cap U_\beta$, $U_{\alpha\beta\gamma} = U_\alpha \cap U_\beta \cap U_\gamma$, etc., and let $E_\alpha := E|_{U_{\alpha}}$, $E_{\alpha\beta} := E|_{U_{\alpha\beta}}$ etc. 
The local-to-global spectral sequence and calculation of sheaf cohomology with respect to the \v{C}ech cover $\{U_\alpha\}_{\alpha \in V(X)}$ gives
\begin{align*}
-\dR\mathrm{Hom}_X(E,E)_0 &= \sum_{\alpha \in V(X),i} (-1)^i \Big(\Gamma(U_\alpha, \oO_{U_\alpha}) -  \Gamma(U_\alpha, \mathcal{E}{\it{xt}}^i(E_\alpha,E_\alpha)) \Big) \\
&- \sum_{\alpha\beta \in E(X),i} (-1)^i \Big(\Gamma(U_{\alpha\beta}, \oO_{U_{\alpha\beta}}) -  \Gamma(U_{\alpha\beta}, \mathcal{E}{\it{xt}}^i(E_{\alpha\beta},E_{\alpha\beta})) \Big).
\end{align*}
Here we use $H^{>0}(U_\alpha,-) = 0$, because $U_\alpha$ is affine. We also use that intersections $U_{\alpha\beta\gamma \cdots }$, 
with $\alpha, \beta, \gamma, \ldots$ pairwise distinct, do not contribute because
$$
E|_{U_{\alpha\beta\gamma\cdots}} = \oO_{U_{\alpha\beta\gamma\cdots}},
$$
which can be seen from \eqref{ET} and the fact that $E'$ is 1-dimensional. Hence we are reduced to determining
\begin{align*}
-\dR\mathrm{Hom}_{U_\alpha}(E_\alpha,E_\alpha)_0 &= \sum_i (-1)^i \Big( \Gamma(U_\alpha, \oO_{U_\alpha}) -  \Gamma(U_\alpha, \mathcal{E}{\it{xt}}^i(E_\alpha,E_\alpha)) \Big), \\
-\dR\mathrm{Hom}_{U_{\alpha\beta}}(E_{\alpha\beta},E_{\alpha\beta})_0 &= \sum_{i} (-1)^i \Big(\Gamma(U_{\alpha\beta}, \oO_{U_{\alpha\beta}}) -  \Gamma(U_{\alpha\beta}, \mathcal{E}{\it{xt}}^i(E_{\alpha\beta},E_{\alpha\beta})) \Big).
\end{align*}

\noindent \textbf{Vertex term.} On $U_\alpha \cong \mathbb{C}^4$, we use coordinates $x_1,x_2,x_3, x_4$ such that the $(\mathbb{C}^*)^4$-action is 
$$
t \cdot x_i = t_i x_i, \quad \textrm{for all } i=1,2,3,4 \, \textrm{ and } \, t = (t_1,t_2,t_3,t_4) \in (\CC^*)^4. 
$$
Let $R:=\Gamma(\oO_{U_\alpha}) \cong \mathbb{C}[x_1,x_2,x_3,x_4]$. Consider the class $[E_\alpha]$ in the equivariant $K$-group $K_{(\mathbb{C}^*)^4}(U_\alpha)$. There exists a ring isomorphism
$$
K_{(\mathbb{C}^*)^4}(U_\alpha) \cong \mathbb{Z}[t_1^{\pm 1},t_2^{\pm 1},t_3^{\pm},t_4^{\pm 1}],
$$
where $[R]$ corresponds to 1. The Laurent polynomial $\mathsf{P}(E_\alpha)$ corresponding to $[E_\alpha]$ via this isomorphism is called the Poincar\'e polynomial of $E_\alpha$.
For any $w=(w_1,w_2,w_3,w_4) \in X((\CC^*)^4) = \mathbb{Z}^4$, we use multi-index notation
$$
t^w:= t_1^{w_1} t_2^{w_2} t_3^{w_3} t_4^{w_4}.
$$
Then $[R \otimes t^w] \in K_{(\mathbb{C}^*)^4}(U_\alpha)$ corresponds to $t^w \in \mathbb{Z}[t_1^{\pm 1},t_2^{\pm 1},t_3^{\pm 1},t_4^{\pm 1}]$.
 
Define an involution $\overline{(-)}$ on $K_{(\mathbb{C}^*)^4}(U_\alpha)$ by $\mathbb{Z}$-linear extension of
$$
\overline{t^w} := t^{-w}.
$$ 
For any element $M \in K_{(\mathbb{C}^*)^4}(U_\alpha)$, we denote the class of its underlying $(\CC^*)^4$-representation by $\tr_M$.
As in \cite{MNOP}, we take a $(\mathbb{C}^*)^4$-equivariant free resolution
$$
0 \rightarrow F_s \rightarrow \cdots \rightarrow F_0 \rightarrow E_\alpha \rightarrow 0,
$$
where 
$$
F_i = \bigoplus_j R \otimes t^{d_{ij}},
$$
for certain $d_{ij} \in \mathbb{Z}^4$. Then
\begin{equation} \label{Poin}
\mathsf{P}(E_\alpha) = \sum_{i,j} (-1)^i t^{d_{ij}}.
\end{equation}
Denote the $(\mathbb{C}^*)^4$-character of $E'|_{U_\alpha}$ by
$$
Z_\alpha := \tr_{E'|_{U_\alpha}}.
$$
When $E' = \oO_Z$, the scheme $Z|_{U_\alpha}$ corresponds to a solid partition $\pi^{(\alpha)}$ as described in Section \ref{DTfix}. Then
\begin{equation} \label{Zalpha}
Z_\alpha = \sum_{i,j,k\geqslant1} \sum_{l=1}^{\pi_{ijk}^{(\alpha)}} t_1^{i-1} t_2^{j-1} t_3^{k-1} t_4^{l-1}.
\end{equation}
When $E' = F$, we use the short exact sequence
$$
0 \rightarrow \oO_C \rightarrow F \rightarrow Q \rightarrow 0,
$$
where $C$ is the Cohen-Macaulay support curve and $Q$ is the cokernel. Then
\begin{equation} \label{PTZalpha}
Z_\alpha = \tr_{\oO_C|_{U_\alpha}} + \tr_{Q|_{U_\alpha}},
\end{equation}
where $\oO_C|_{U_\alpha}$ is described by a solid partition $\pi^{(\alpha)}$ and $Q|_{U_\alpha}$ is determined by a box configuration $B^{(\alpha)}$ as in Section \ref{PTfix}. Then $\tr_{\oO_C|_{U_\alpha}}$ is given by the RHS of \eqref{Zalpha}. Moreover, $\tr_{Q|_{U_\alpha}}$ is the sum of $t^w$ over all $w \in B^{(\alpha)}$.

In both cases, $Z_\alpha$ can be expressed in terms of the Poincar\'e polynomial of $E_\alpha$ as follows
\begin{equation} \label{ZvP}
Z_\alpha =  \tr_{\oO_{U_\alpha} - E_\alpha} = \frac{1-\mathsf{P}(E_\alpha)}{(1-t_1)(1-t_2)(1-t_3)(1-t_4)}.
\end{equation}
We obtain
\begin{align*}
\dR\mathrm{Hom}(E_\alpha,E_\alpha) &= \sum_{i,j,k,l} (-1)^{i+k} \mathrm{Hom}(R \otimes t^{d_{ij}},R \otimes t^{d_{kl}}) \\
&= \sum_{i,j,k,l} (-1)^{i+k} R \otimes t^{d_{kl} - d_{ij}} \\
&=\mathsf{P}(E_\alpha)\overline{\mathsf{P}(E_\alpha)},  \\
\tr_{\dR\mathrm{Hom}(E_\alpha,E_\alpha)}&= \frac{\mathsf{P}(E_\alpha)\overline{\mathsf{P}(E_\alpha)} }{(1-t_1)(1-t_2)(1-t_3)(1-t_4)},
\end{align*}
where we used \eqref{Poin} for the third equality. Eliminating $\mathsf{P}(E_\alpha)$ by using \eqref{ZvP}, we conclude
\begin{equation} \label{defVprelim}
\tr_{-\dR\mathrm{Hom}(E_\alpha,E_\alpha)_0} =  Z_\alpha + \frac{\overline{Z}_{\alpha}}{t_1t_2t_3t_4} - \frac{Z_\alpha \overline{Z}_\alpha(1-t_1)(1-t_2)(1-t_3)(1-t_4)}{t_1t_2t_3t_4}.
\end{equation}

\noindent \textbf{Edge term.} Let $\alpha\beta \in E(X)$. On $U_{\alpha\beta} \cong \CC^* \times \mathbb{C}^3$, we use coordinates $x_1,x_2,x_3, x_4$ such that the $(\mathbb{C}^*)^4$-action is given by
$$
t \cdot x_i = t_i x_i, \quad \textrm{for all } i=1,2,3,4 \, \textrm{ and } \, t=(t_1,t_2,t_3,t_4) \in (\CC^*)^4. 
$$
Let $R:=\Gamma(\oO_{U_{\alpha\beta}}) \cong \mathbb{C}[x_1,x_1^{-1}] \otimes \CC[x_2,x_3,x_4]$. As in \cite{MNOP}, we define
$$
\delta(t_1) := \sum_{k \in \ZZ} t_1^k.
$$
Similar to the vertex calculation, we set 
$$
Z_{\alpha\beta} := \tr_{E'|_{U_{\alpha\beta}}}.
$$
In both cases, $E = I_Z$ and $E = I^\mdot$, we have an underlying Cohen-Macaulay support curve $C$ and $C|_{U_\alpha}$, $C|_{U_\beta}$ are described by solid partitions $\pi^{(\alpha)}$, $\pi^{(\beta)}$ respectively. Suppose in both charts $U_\alpha \cong \CC^4$ and $U_\beta \cong \CC^4$, the line $C_{\alpha\beta} \cong \PP^1$ is given by $\{x_2 = x_3 = x_4 = 0\}$. Then $\pi^{(\alpha)}$, $\pi^{(\beta)}$ give rise to the same asymptotic finite plane partition $\lambda^{(\alpha\beta)}$ along the $x_1$-direction, by gluing condition \eqref{glue}, and
\begin{equation} \label{Zlambda}
Z_{\alpha\beta} = \sum_{j,k \geqslant 1} \sum_{l=1}^{\lambda^{(\alpha\beta)}} t_2^{j-1} t_3^{k-1} t_4^{l-1}.
\end{equation}
A computation similar to the vertex case yields
\begin{equation} \label{defEprelim}
- \tr_{-\dR\mathrm{Hom}(E_{\alpha\beta},E_{\alpha\beta})_0} =  \delta(t_1) \left( -Z_{\alpha\beta} + \frac{\overline{Z}_{\alpha\beta}}{t_2t_3t_4} - \frac{Z_{\alpha\beta} \overline{Z}_{\alpha\beta}(1-t_2)(1-t_3)(1-t_4)}{t_2t_3t_4} \right).
\end{equation}

\noindent \textbf{Redistribution.} Expressions \eqref{defVprelim} and \eqref{defEprelim} are formal Laurent series in $t_1,t_2,t_3,t_4$. However, MNOP \cite{MNOP} give a method for redefining these terms in such a way that we get Laurent \emph{polynomials} in $t_1, t_2, t_3, t_4$. This process is known as redistribution. Define
\begin{equation} \label{defF}
\mathsf{F}_{\alpha\beta} := -Z_{\alpha\beta} + \frac{\overline{Z}_{\alpha\beta}}{t_2t_3t_4} - \frac{Z_{\alpha\beta} \overline{Z}_{\alpha\beta}(1-t_2)(1-t_3)(1-t_4)}{t_2t_3t_4}.
\end{equation}
For any $\alpha \in V(X)$, denote by $C_{\alpha\beta_1}, C_{\alpha\beta_2}, C_{\alpha\beta_3}, C_{\alpha\beta_4}$ the four $(\CC^*)^4$-invariant lines passing through the $(\CC^*)^4$-fixed point $p_\alpha$.  Then we define
\begin{equation} \label{defV}
\mathsf{V}_\alpha := \tr_{-\dR\mathrm{Hom}(E_\alpha,E_\alpha)_0} + \sum_{i=1}^{4} \frac{\mathsf{F}_{\alpha\beta_i}(t_{i'},t_{i''},t_{i'''})}{1-t_i},
\end{equation}
where $\{t_i,t_{i'},t_{i''},t_{i'''}\} = \{t_1,t_2,t_3,t_4\}$. For any $\alpha\beta \in E(X)$, use coordinates as in the treatment of the edge term above. Then we define
\begin{equation} \label{defE}
\mathsf{E}_{\alpha\beta} := t_1^{-1} \frac{\mathsf{F}_{\alpha\beta}(t_2,t_3,t_4)}{1-t_1^{-1}} - \frac{\mathsf{F}_{\alpha\beta}(t_2t_1^{-m_{\alpha\beta}},t_3t_1^{-m'_{\alpha\beta}},t_4 t_1^{-m''_{\alpha\beta}})}{1-t_1^{-1}},
\end{equation}
where $(t_1,t_2,t_3,t_4) \mapsto (t_1^{-1}, t_2 t_1^{-m_{\alpha\beta}},  t_3 t_1^{-m'_{\alpha\beta}},  t_4 t_1^{-m''_{\alpha\beta}})$ corresponds to the coordinate transformation $U_\alpha \rightarrow U_\beta$ and $m_{\alpha\beta}, m'_{\alpha\beta}, m''_{\alpha\beta}$ are the weights of the normal bundle of $C_{\alpha\beta}$ defined in \eqref{normal}. The result of this redistibution is 
$$
\mathsf{V}_\alpha \in \ZZ[t_1^{\pm 1}, t_2^{\pm 1}, t_3^{\pm 1}, t_4^{\pm 1}], \qquad \mathsf{E}_{\alpha\beta} \in \ZZ[t_2^{\pm 1}, t_3^{\pm 1}, t_4^{\pm 1}],
$$
which is proved precisely as in \cite[Lem.~9]{MNOP}. When $E = I_Z$, we write $\mathsf{V}_\alpha^{\DT}$ for $\mathsf{V}_\alpha$ and $\mathsf{E}_{\alpha\beta}^{\DT}$ for $\mathsf{E}_{\alpha\beta}$. When $E = I^\mdot$, we write $\mathsf{V}_\alpha^{\PT}$ for $\mathsf{V}_\alpha$ and $\mathsf{E}_{\alpha\beta}^{\PT}$ for $\mathsf{E}_{\alpha\beta}$. Note that $\mathsf{E}_{\alpha\beta}^{\DT} = \mathsf{E}_{\alpha\beta}^{\PT}$. We summarize:
\begin{prop}
Let $X$ be a toric Calabi-Yau 4-fold and $\beta \in H_2(X)$. Let $E = I_Z$, with $Z \in I_n(X,\beta)^{(\CC^*)^4}$ or $E = [I^\mdot] \in P_n(X,\beta)^{(\CC^*)^4}$. In the second case, we assume the Cohen-Macaulay support curve $C$ has the following property: for all $\alpha \in V(X)$, the solid partition associated to $C|_{U_\alpha}$ has at most two non-empty asymptotic plane partitions. Then 
$$
\tr_{-\dR\mathrm{Hom}(E,E)_0} = \sum_{\alpha \in V(X)} \mathsf{V}_\alpha + \sum_{\alpha\beta \in E(X)} \mathsf{E}_{\alpha\beta},
$$
where $\mathsf{V}_\alpha$ and $\mathsf{E}_{\alpha\beta}$ are Laurent polynomials for all $\alpha \in V(X)$ and $\alpha\beta \in E(X)$.
\end{prop}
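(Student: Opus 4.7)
The equality is essentially built in once one combines the \v{C}ech decomposition already derived with the partial-fraction identity $\delta(t_1) = \frac{1}{1-t_1} + \frac{t_1^{-1}}{1-t_1^{-1}}$. Indeed, substituting \eqref{defVprelim} and \eqref{defEprelim} into the \v{C}ech expression for $-\dR\mathrm{Hom}(E,E)_0$ gives
$$
\tr_{-\dR\mathrm{Hom}(E,E)_0} = \sum_{\alpha \in V(X)} \tr_{-\dR\mathrm{Hom}(E_\alpha,E_\alpha)_0} + \sum_{\alpha\beta \in E(X)} \delta(t_1)\,\mathsf{F}_{\alpha\beta}(t_2,t_3,t_4),
$$
where the sign on the edge sum comes from the two relative signs combining. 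Splitting $\delta(t_1)$ as above, the $\frac{1}{1-t_1}$-piece of each edge contribution is absorbed into $\mathsf{V}_\alpha$ (this is the added sum in \eqref{defV}), while the $\frac{t_1^{-1}}{1-t_1^{-1}}$-piece, after applying the coordinate change $(t_1,t_2,t_3,t_4)\mapsto(t_1^{-1},t_2t_1^{-m_{\alpha\beta}},t_3t_1^{-m'_{\alpha\beta}},t_4 t_1^{-m''_{\alpha\beta}})$ taking $U_\alpha$-coordinates to $U_\beta$-coordinates, is absorbed into $\mathsf{V}_\beta$. The discrepancy between these two ways of writing the second piece is precisely $\mathsf{E}_{\alpha\beta}$ as defined in \eqref{defE}, so summing over edges gives the claimed decomposition formally.

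The remaining content of the proposition is that $\mathsf{E}_{\alpha\beta}$ and $\mathsf{V}_\alpha$ are genuine Laurent polynomials rather than formal series. The edge check is easy: $\mathsf{F}_{\alpha\beta}(t_2,t_3,t_4)$ is already a Laurent polynomial since $Z_{\alpha\beta}$ in \eqref{Zlambda} is a finite sum over the plane partition $\lambda^{(\alpha\beta)}$, and the numerator of \eqref{defE}, viewed as a function of $t_1$, specializes to $\mathsf{F}_{\alpha\beta}(t_2,t_3,t_4) - \mathsf{F}_{\alpha\beta}(t_2,t_3,t_4) = 0$ at $t_1 = 1$, so the apparent pole at $t_1 = 1$ cancels and $\mathsf{E}_{\alpha\beta}$ is a Laurent polynomial (the Calabi-Yau condition $m_{\alpha\beta}+m'_{\alpha\beta}+m''_{\alpha\beta} = -2$ plays no role here).

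The main work is showing $\mathsf{V}_\alpha$ is a Laurent polynomial, which is the 4-fold analogue of \cite[Lem.~9]{MNOP}. The plan is to separate off the four infinite legs of $Z_\alpha$ by writing
$$
Z_\alpha = Z_\alpha^{\mathrm{fin}} + \sum_{i=1}^{4} \frac{Z_{\alpha\beta_i}(t_{i'},t_{i''},t_{i'''})}{1-t_i},
$$
where $Z_\alpha^{\mathrm{fin}}$ is the Laurent polynomial accounting for the body of $\pi^{(\alpha)}$, the inclusion-exclusion corrections at weights belonging to two, three, or four legs, and (in the PT case) the boxes of the cokernel configuration $B^{(\alpha)}$. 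Substituting into \eqref{defVprelim}, the terms involving $Z_\alpha^{\mathrm{fin}}$ alone, as well as cross terms between $Z_\alpha^{\mathrm{fin}}$ and a leg or between legs along different axes, yield Laurent polynomials because they are multiplied by $(1-t_1)(1-t_2)(1-t_3)(1-t_4)/t_1t_2t_3t_4$, which cancels any single factor $1-t_j$ appearing in a denominator. The remaining pure $x_i$-leg contribution is a direct computation: plugging $Z_\alpha \supset \frac{Z_{\alpha\beta_i}}{1-t_i}$ and $\overline{Z}_\alpha \supset -\frac{t_i\overline{Z}_{\alpha\beta_i}}{1-t_i}$ into \eqref{defVprelim} collapses precisely to $-\mathsf{F}_{\alpha\beta_i}(t_{i'},t_{i''},t_{i'''})/(1-t_i)$, which is cancelled exactly by the $+\mathsf{F}_{\alpha\beta_i}/(1-t_i)$ added in \eqref{defV}. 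The main obstacle is the bookkeeping of the two-, three-, and four-leg overlap corrections bundled into $Z_\alpha^{\mathrm{fin}}$ and the tracking of how they combine with the leg-leg cross terms; however, this is a mechanical extension of MNOP's 3-fold calculation, and no new ingredient beyond patient algebraic manipulation is required.
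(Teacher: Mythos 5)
Your proposal is correct and follows essentially the same route as the paper: the proposition is just a summary of the preceding \v{C}ech decomposition plus the MNOP-style redistribution, with the Laurent-polynomial claim deferred to \cite[Lem.~9]{MNOP}. You in fact supply more detail than the paper does --- the vanishing of the edge numerator at $t_1=1$ and the exact cancellation of the pure-leg contribution $-\mathsf{F}_{\alpha\beta_i}/(1-t_i)$ against the term added in \eqref{defV} are both correct computations --- so nothing is missing.
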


\subsection{Equivariant $\mathrm{DT/PT}$ correspondence}

Consider plane partitions $\lambda, \mu, \nu, \rho$ of finite size. In the stable pairs case, we assume at most two are non-empty. This data determines a $(\CC^*)^4$-fixed Cohen-Macaulay curve $C \subseteq \CC^4$ with solid partition defined by \eqref{CMsolid}. We denote this solid partition by $\pi_{\mathrm{CM}}(\lambda, \mu, \nu, \rho)$. We are interested in:
\begin{itemize}
\item All $(\CC^*)^4$-invariant closed subschemes $Z \subseteq \CC^4$ with underlying maximal Cohen-Macaulay subcurve $C$. These correspond to solid partitions $\pi$ with asymptotic plane partitions $\lambda, \mu, \nu, \rho$ in directions $1,2,3,4$ (Section \ref{DTfix}). We denote the collection of such solid partitions by $\Pi^{\DT}(\lambda, \mu, \nu, \rho)$. To any solid partition $\pi \in \Pi^{\DT}(\lambda, \mu, \nu, \rho)$, we associate a character $Z_\pi$ defined by RHS of \eqref{Zalpha}. This in turn determines a Laurent polynomial 
$$\mathsf{V}^{\DT}_\pi \in \ZZ[t_1^{\pm 1},t_2^{\pm 1},t_3^{\pm 1},t_4^{\pm 1}]$$ 
by RHS of \eqref{defV} (via \eqref{defF}, \eqref{defVprelim}).
\item All $(\CC^*)^4$-invariant stable pairs $(F,s)$ on $\CC^4$ with underlying Cohen-Macaulay support curve $C$. These correspond to box configurations as described in Section \ref{PTfix}. Denote the collection of such box configurations by $\Pi^{\PT}(\lambda, \mu, \nu, \rho)$. To any box configuration $B \in \Pi^{\PT}(\lambda, \mu, \nu, \rho)$, we associate a character $Z_B$ defined by the RHS of \eqref{PTZalpha}, where the Cohen-Macaulay part is given by \eqref{Zalpha}, for solid partition $\pi_{\mathrm{CM}}(\lambda, \mu, \nu, \rho)$, and the cokernel part is the sum of $t^w$ over all $w \in B$. This determines a Laurent polynomial 
$$\mathsf{V}^{\PT}_\pi  \in \ZZ[t_1^{\pm 1},t_2^{\pm 1},t_3^{\pm 1},t_4^{\pm 1}]$$ 
by RHS of \eqref{defV} (via \eqref{defF}, \eqref{defVprelim}).
\end{itemize} 

From the definitions, one readily calculates (in both the DT/PT case)
\begin{align*}
\overline{\mathsf{V}}_\pi &= \mathsf{V}_\pi \cdot t_1t_2t_3t_4.
\end{align*}
Therefore, after restricting to $\ZZ[t_1,t_2,t_3,t_4] / \langle t_1t_2t_3t_4 - 1 \rangle$, this expression has a square root. This allows us to make the following definition:
\begin{defi}
Consider plane partitions $\lambda, \mu, \nu, \rho$ of finite size. Define the equivariant 4-fold DT vertex by
\footnote{In this definition, we use that $\mathsf{V}_\pi^{\DT}$ does not contain any $T$-fixed terms with coefficient ``$+$'' (no $T$-fixed deformations). This follows from the fact that \eqref{defVprelim} has no $T$-fixed terms with coefficient ``$+$'' (Lemma \ref{Tfixlocus scheme}) and redistribution does not introduce any new $T$-fixed terms. Similar considerations hold in the stable pairs case and for the edge terms discussed below.}
$$
\mathsf{V}_{\lambda\mu\nu\rho}^{\DT}(q)_{o(\lL)} := \sum_{\pi \in \Pi^{\DT}(\lambda, \mu, \nu, \rho)} (-1)^{o(\lL)|_\pi} \sqrt{(-1)^{a_\pi} e_T(-\mathsf{V}^{\DT}_\pi)} \, q^{|\pi|} \in \QQ(\lambda_1,\lambda_2,\lambda_3)(\!(q)\!),
$$
where $o(\lL)|_\pi = 0,1$ gives a choice of sign for each $\pi$, $e_T(-)$ is the equivariant Euler class, and $|\pi|$ denotes renormalized volume. Moreover, $(-1)^{a_\pi}$ denotes the unique sign for which $(-1)^{a_\pi} e_T(-\mathsf{V}^{\DT}_\pi)$ is a square in $\QQ(\lambda_1,\lambda_2,\lambda_3)$. 

Assume at most two of $\lambda, \mu, \nu, \rho$  are non-empty. Define the equivariant 4-fold PT vertex by
$$
\mathsf{V}_{\lambda\mu\nu\rho}^{\PT}(q)_{o(\lL)} := \sum_{B \in \Pi^{\PT}(\lambda, \mu, \nu, \rho)} (-1)^{o(\lL)|_B} \sqrt{(-1)^{a_B} e_T(-\mathsf{V}^{\PT}_B)} \, q^{|B| + |\pi_{\mathrm{CM}}(\lambda, \mu, \nu, \rho)|} \in \QQ(\lambda_1,\lambda_2,\lambda_3)(\!(q)\!),
$$
where $o(\lL)|_B = 0,1$ gives a choice of sign for each $B$, $|B|$ denotes the total number of boxes in the box configuration, and $|\pi_{\mathrm{CM}}(\lambda, \mu, \nu, \rho)|$ denotes renormalized volume. Moreover, $(-1)^{a_B}$ denotes the unique sign for which $(-1)^{a_B} e_T(-\mathsf{V}^{\PT}_B)$ is a square in $\QQ(\lambda_1,\lambda_2,\lambda_3)$. We often omit $o(\lL)$ from the notation. We also define ``normalizations''
\begin{align*}
\widetilde{\mathsf{V}}_{\lambda\mu\nu\rho}^{\DT}(q) :=&\, q^{-|\pi_{\mathrm{CM}}(\lambda, \mu, \nu, \rho)|}\mathsf{V}_{\lambda\mu\nu\rho}^{\DT}(q) \in \QQ(\lambda_1,\lambda_2,\lambda_3)[\![q]\!], \\
\widetilde{\mathsf{V}}_{\lambda\mu\nu\rho}^{\PT}(q) :=&\, q^{-|\pi_{\mathrm{CM}}(\lambda, \mu, \nu, \rho)|}\mathsf{V}_{\lambda\mu\nu\rho}^{\PT}(q) \in \QQ(\lambda_1,\lambda_2,\lambda_3)[\![q]\!].
\end{align*}
\end{defi}

Analogously, we associate to a finite plane partition $\lambda$ the character $Z_\lambda$ defined by the RHS of \eqref{Zlambda}. We define the edge term 
$$\mathsf{E}^{\DT}_{\lambda,o(\lL)} = \mathsf{E}^{\PT}_{\lambda,o(\lL)} \in \QQ(\lambda_1,\lambda_2,\lambda_3)$$ 
by taking the square root of the (signed) Euler class of minus the RHS of \eqref{defE} (via \eqref{defF}). As before, this definition depends on a sign $(-1)^{o(\lL)|_\lambda}$, where $o(\lL)|_\lambda=0,1$. We usually omit this dependence from the notation.

The vertex formalism reduces the calculation of equivariant curve counting and stable pair invariants of any toric Calabi-Yau 4-fold to a combinatorial expression involving $\mathsf{V}_{\lambda\mu\nu\rho}$ and $\mathsf{E}_{\lambda}$. Rather than writing this down in general, we discuss an illustrative example. Suppose $X$ is the total space of $\oO_{\PP^2}(-1) \oplus \oO_{\PP^2}(-2)$. Let $\beta = d\,[\PP^1]$. Then Lemma \ref{chi} and the vertex formalism (Section \ref{vertex section}) imply
\begin{align*}
&I_{\beta}(X;T)=\sum_{\lambda,\mu,\nu \atop |\lambda|+|\mu|+|\nu| = d} q^{f_{1,-1,-2}(\lambda)+f_{1,-1,-2}(\mu)+f_{1,-1,-2}(\nu)} \mathsf{E}^{\DT}_{\lambda}|_{(\lambda_1,\lambda_2,\lambda_3,\lambda_4)} \mathsf{V}_{\lambda\mu\varnothing\varnothing}^{\DT}|_{(\lambda_1,\lambda_2,\lambda_3,\lambda_4)} \mathsf{E}^{\DT}_{\mu} |_{(\lambda_2,\lambda_1,\lambda_3,\lambda_4)}  \\
&\cdot \mathsf{V}^{\DT}_{\mu\nu\varnothing\varnothing} |_{(-\lambda_2,\lambda_1-\lambda_2,\lambda_3+\lambda_2,\lambda_4+2\lambda_2)} \mathsf{E}^{\DT}_{\nu}|_{(\lambda_1 - \lambda_2, -\lambda_2,\lambda_3+\lambda_2,\lambda_4+2\lambda_2)} \mathsf{V}^{\DT}_{\nu\lambda\varnothing\varnothing}|_{(\lambda_2-\lambda_1,-\lambda_1,\lambda_3+\lambda_1,\lambda_4+2\lambda_1)},
\end{align*}
where the sum is over all finite plane partitions $\lambda, \mu, \nu$ satisfying $|\lambda|+|\mu|+|\nu| = d$. Here the choice of signs for $I_{\beta}(X;T)$ is determined by the choice of signs in each vertex and edge term. The same expression holds for $P_{\beta}(X;T)$ replacing $\DT$ by $\PT$.

When $\beta=0$, the space $I_{n}(X,\beta)$ reduces to the Hilbert scheme $\Hilb^n(X)$ of $n$ points previously studied in \cite{CK, Nekrasov, NP}. 
When $X = \CC^4$, we have $I_0(X;T) = \mathsf{V}_{\varnothing\varnothing\varnothing\varnothing}^{\DT}(q)$. A closed expression for this generating series was conjectured by Nekrasov \cite{Nekrasov}:
\begin{conj}\label{nekrasov conj}$(\mathrm{Nekrasov}$ \cite{Nekrasov}$\mathrm{)}$
There exist unique choices of signs such that the following formula holds
\begin{align*}
\mathsf{V}_{\varnothing\varnothing\varnothing\varnothing}^{\DT}(q) =\exp\bigg(q \frac{(\lambda_1+\lambda_2)(\lambda_1+\lambda_3)(\lambda_2+\lambda_3)}{\lambda_1\lambda_2\lambda_3(\lambda_1+\lambda_2+\lambda_3)}\bigg). \end{align*}
\end{conj}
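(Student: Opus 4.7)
The plan is to take logarithms and reduce the conjecture to showing that $F(q) := \log \mathsf V_{\varnothing\varnothing\varnothing\varnothing}^{\DT}(q)$ is \emph{linear} in $q$ with $q^1$-coefficient equal to $r(\boldsymbol{\lambda}) := \frac{(\lambda_1+\lambda_2)(\lambda_1+\lambda_3)(\lambda_2+\lambda_3)}{\lambda_1\lambda_2\lambda_3(\lambda_1+\lambda_2+\lambda_3)}$. Expanding $\mathsf V_{\varnothing\varnothing\varnothing\varnothing}^{\DT}(q) = \sum_{n\geqslant 0} V_n\,q^n$ with $V_n = \sum_{|\pi|=n}\epsilon_\pi \sqrt{(-1)^{a_\pi}e_T(-\mathsf V_\pi^{\DT})}$, this is equivalent to the family of identities $V_n = r(\boldsymbol{\lambda})^n/n!$ for all $n\geqslant 0$. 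The case $n=1$ is a direct calculation: the unique point-like solid partition of size one corresponds to $Z_\pi = 1$, and applying \eqref{defVprelim} and simplifying on the Calabi-Yau torus one finds $e_T(-\mathsf V_\pi^{\DT}) = -r(\boldsymbol{\lambda})^2$, whose signed square root with $\epsilon_\pi = +1$ equals $r(\boldsymbol{\lambda})$, matching the right-hand side modulo $q^2$ and fixing the sign at the one-box partition.

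For $n \geqslant 2$ the plan is to establish a sign-reversing pairing on point-like solid partitions of size $n$. Fixing a lexicographic order on $\ZZ_{\geqslant 1}^4$, each partition $\pi$ is associated with its companion $\pi'$ obtained by toggling the least addable or removable cell; by construction $Z_{\pi'} = Z_\pi \pm t^w$ for the weight $w$ of the toggled cell, so $\mathsf V_{\pi'}^{\DT}$ and $\mathsf V_{\pi}^{\DT}$ differ by an explicit bilinear correction in $Z_\pi$ and $t^w$ readable off \eqref{defVprelim}. With a suitable sign assignment $\epsilon_{\pi'} = -\epsilon_\pi$ and an inductive hypothesis relating the $(n-1)$-st coefficient to $r(\boldsymbol{\lambda})^{n-1}/(n-1)!$, the paired contributions should telescope into a sum of the form $\tfrac{1}{n}V_{1}V_{n-1}$, which equals the required $r(\boldsymbol{\lambda})^n/n!$.

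The main obstacle is that four-dimensional solid-partition combinatorics admits no analogue of MacMahon's generating formula, and the square roots appearing in the Borisov--Joyce half-Euler classes obstruct direct box-by-box cancellation: a naive involution will not match numerator and denominator weights between paired partitions. The technical heart of the proof, in my view, should be Nekrasov's \emph{magnificent four} reformulation: recognise $V_n$ as an equivariant push-forward over the Hilbert scheme $\Hilb^n(\CC^4)$ equipped with its $T$-equivariant Borisov--Joyce virtual structure, rewrite the torus-localized contributions as equivariant $K$-theoretic indices of tautological bundles via a cohomological limit of Nekrasov--Piazzalunga, and collapse the generating series to an exponential by a direct fermionic/plethystic identity on equivariant characters. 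Uniqueness of the signs should then follow as in \cite{CK}: at each order in $q$, the identity $V_n = r(\boldsymbol{\lambda})^n/n!$ is a single polynomial constraint on the sign vector $\{\epsilon_\pi\}_{|\pi|=n}$, and linear independence over $\QQ(\lambda_1,\lambda_2,\lambda_3)$ of the distinct Euler-class square roots forces this constraint to have a unique solution once the sign at $n=1$ is chosen (with the overall $\pm$ flip corresponding to flipping the global orientation).
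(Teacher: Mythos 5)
This statement is not proved in the paper: it is stated as Conjecture \ref{nekrasov conj}, attributed to Nekrasov, and the only support the paper records is computational --- existence of the signs verified modulo $q^7$ in \cite{Nekrasov} and modulo $q^{17}$ in \cite{NP}, and uniqueness conjectured and checked modulo $q^5$ in \cite{CK}. So there is no proof in the paper for your argument to be measured against, and any complete proof you supplied would go strictly beyond the source. Your $n=1$ computation is correct (one box gives $\mathsf{V}_\pi^{\DT}=\sum_i(t_i+t_i^{-1})-\sum_{i<j}t_it_j$ on the Calabi--Yau torus, whose signed Euler class is $-r(\boldsymbol{\lambda})^2$), but that is exactly the part that was already known.

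The rest of your proposal has genuine gaps. First, the sign-reversing pairing for $n\geqslant 2$ is never constructed: you describe toggling the least addable/removable cell, concede that the square roots of half-Euler classes obstruct box-by-box cancellation, and then assert that the paired contributions ``should telescope into $\tfrac1n V_1V_{n-1}$.'' That recursion is precisely equivalent to the conjecture (it integrates to $V_n=r^n/n!$), so asserting it is assuming what is to be proved; no mechanism is given for why the correction terms in \eqref{defVprelim} produced by adding one box would organize into that form, and no such involution is known. Second, the fallback to the ``magnificent four reformulation'' is circular: at the time of this paper, \cite{Nekrasov} and \cite{NP} contain finite-order verifications and physical derivations, not a theorem you can cite to ``collapse the generating series to an exponential''; the plethystic identity you invoke is the open content of the conjecture. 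Third, the uniqueness argument fails as stated: the quantities $\sqrt{(-1)^{a_\pi}e_T(-\mathsf{V}_\pi^{\DT})}$ are elements of $\QQ(\lambda_1,\lambda_2,\lambda_3)$, so ``linear independence over $\QQ(\lambda_1,\lambda_2,\lambda_3)$'' of finitely many such rational functions is vacuous, and a single equation $\sum_\pi \epsilon_\pi c_\pi = r^n/n!$ in the sign vector $\{\epsilon_\pi\}\in\{\pm1\}^{p}$ need not have a unique solution; uniqueness is itself a separate conjecture in \cite{CK} with only low-order evidence. If you want to contribute something checkable here, the realistic target is to extend the order-by-order verification, not to claim a closed-form proof.
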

\begin{rmk}
The existence part of the conjecture was verified modulo $q^7$ in \cite{Nekrasov} and later modulo $q^{17}$ in \cite{NP}. 
The uniqueness was conjectured in \cite[Appendix B]{CK}  and checked modulo $q^5$. Note that
$$
\frac{(\lambda_1+\lambda_2)(\lambda_1+\lambda_3)(\lambda_2+\lambda_3)}{\lambda_1\lambda_2\lambda_3(\lambda_1+\lambda_2+\lambda_3)} =-\int_{\CC^4}c^T_3(\CC^4),
$$
where $\int_{\CC^4}$ is equivariant push-forward to a point and $c^T_3(\CC^4)$ denotes the equivariant third Chern class of the tangent bundle $T_{\CC^4}$. 
\end{rmk}

Let $X$ be any toric Calabi-Yau 4-fold. The vertex formalism (for $\beta=0$), Atiyah-Bott localization on $X$, and Nekrasov's conjecture at once imply the following result about $\Hilb^n(X)$:
\begin{prop}
Assuming Conjecture \ref{nekrasov conj} holds, then for any toric Calabi-Yau 4-fold $X$, there exist choices of signs such that 
\begin{align*} I_{0}(X;T)=\exp\bigg(-q \int_Xc_3^T(X)\bigg). \end{align*}
\end{prop}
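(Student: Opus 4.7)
The plan is to combine three ingredients: the vertex formalism applied in the $\beta=0$ case, Nekrasov's conjecture applied at each torus-fixed point of $X$, and the Atiyah-Bott localization formula on $X$.

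First, I would unpack the vertex formalism for $\beta = 0$. Every $Z \in I_n(X,0)^T$ is supported at the finitely many $(\CC^*)^4$-fixed points $\{p_\alpha\}_{\alpha \in V(X)}$ and decomposes as a disjoint union $Z = \bigsqcup_\alpha Z_\alpha$ with each $Z_\alpha \subseteq U_\alpha \cong \CC^4$ corresponding to a point-like solid partition $\pi^{(\alpha)}$. Since all asymptotic plane partitions are empty, the edge characters $Z_{\alpha\beta}$ vanish and so do the edge terms $\mathsf{E}_{\alpha\beta}$. The sum over $n$ therefore factorizes as a product of independent vertex contributions, giving
\begin{align*}
I_{0}(X; T) \,=\, \prod_{\alpha \in V(X)} \sfV_{\varnothing\varnothing\varnothing\varnothing}^{\DT}(q)\Big|_{\lambda_i \mapsto \lambda_i^{(\alpha)}},
\end{align*}
where $\lambda_1^{(\alpha)}, \ldots, \lambda_4^{(\alpha)}$ are the tangent weights at $p_\alpha$, satisfying $\sum_i \lambda_i^{(\alpha)} = 0$ by the Calabi-Yau condition.

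Second, I would apply Conjecture \ref{nekrasov conj} at each vertex $\alpha \in V(X)$, making the choice of signs that yields Nekrasov's formula. Combined with the identity
\begin{align*}
\frac{(\lambda_1^{(\alpha)}+\lambda_2^{(\alpha)})(\lambda_1^{(\alpha)}+\lambda_3^{(\alpha)})(\lambda_2^{(\alpha)}+\lambda_3^{(\alpha)})}{\lambda_1^{(\alpha)}\lambda_2^{(\alpha)}\lambda_3^{(\alpha)}(\lambda_1^{(\alpha)}+\lambda_2^{(\alpha)}+\lambda_3^{(\alpha)})} \,=\, -\frac{c_3^T(T_{p_\alpha}X)}{e_T(T_{p_\alpha}X)},
\end{align*}
recalled in the remark after Conjecture \ref{nekrasov conj} (which is a direct computation using $\lambda_4^{(\alpha)} = -(\lambda_1^{(\alpha)}+\lambda_2^{(\alpha)}+\lambda_3^{(\alpha)})$), this gives
\begin{align*}
\sfV_{\varnothing\varnothing\varnothing\varnothing}^{\DT}(q)\Big|_{p_\alpha} \,=\, \exp\!\bigg(-q\,\frac{c_3^T(T_{p_\alpha}X)}{e_T(T_{p_\alpha}X)}\bigg).
\end{align*}

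Third, I would take the product over $\alpha \in V(X)$, turning the product of exponentials into an exponential of a sum, and recognize the resulting sum as an Atiyah-Bott localization expression. Since $X$ is a smooth toric 4-fold with isolated $T$-fixed points (the dense open torus and the Calabi-Yau subtorus have the same fixed locus on $X$, as in Lemma \ref{Tfixlocus}), the Atiyah-Bott formula gives
\begin{align*}
\sum_{\alpha \in V(X)} \frac{c_3^T(T_{p_\alpha}X)}{e_T(T_{p_\alpha}X)} \,=\, \int_X c_3^T(X),
\end{align*}
and the proposition follows. There is no genuine obstacle: the argument is entirely a bookkeeping assembly, and the only choices of signs required are the unique ones at each vertex provided by Nekrasov's conjecture.
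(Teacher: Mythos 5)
Your proposal is correct and is exactly the argument the paper has in mind: the paper states that the result follows "at once" from the vertex formalism for $\beta=0$, Atiyah--Bott localization on $X$, and Nekrasov's conjecture, and your write-up simply fills in those three steps (factorization over vertices with vanishing edge terms, the identity $-c_3^T(T_{p_\alpha}X)/e_T(T_{p_\alpha}X)$ for the exponent from the remark following Conjecture \ref{nekrasov conj}, and summing the local contributions into $\int_X c_3^T(X)$). No gaps.
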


We conjecture the following DT/PT vertex correspondence:
\begin{conj} \label{affine DT/PT conj}
For any finite plane partitions $\lambda, \mu, \nu, \rho$, at most two of which are non-empty, there are choices of signs such that
$$
\mathsf{V}_{\lambda\mu\nu\rho}^{\DT}(q) = \mathsf{V}_{\lambda\mu\nu\rho}^{\PT}(q) \, \mathsf{V}_{\varnothing\varnothing\varnothing\varnothing}^{\DT}(q).
$$
Suppose we choose the signs for $\mathsf{V}_{\varnothing\varnothing\varnothing\varnothing}^{\DT}(q)$ equal to the unique signs in Nekrasov's conjecture \ref{nekrasov conj}. Then, at each order in $q$, the choice of signs for which LHS and RHS agree is unique up to an overall sign.
\end{conj}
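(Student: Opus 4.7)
The plan is to emulate, at the level of the toric vertex, the derived-category wall-crossing argument of Bridgeland and Toda that proves the 3-fold DT/PT correspondence, adapted to the Borisov-Joyce framework on $\CC^4$. In the 3-fold interpretation, a torus-fixed ideal sheaf on $\CC^3$ with prescribed asymptotics decomposes, after a sheaf-theoretic wall-crossing, into a Cohen-Macaulay curve carrying a Pandharipande-Thomas pair together with a collection of ``floating'' zero-dimensional subschemes. The goal is to establish an analogous factorization of the equivariant DT vertex, first on $(\CC^*)^4$-characters and then on half-Euler classes.

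First I would set up a canonical additive splitting of characters: for $\pi\in\Pi^{\DT}(\lambda,\mu,\nu,\rho)$, write
\begin{align*}
Z_\pi \;=\; Z_{\pi_{\mathrm{CM}}(\lambda,\mu,\nu,\rho)} \,+\, Z_B \,+\, Z_{\pi_0},
\end{align*}
where $Z_{\pi_{\mathrm{CM}}}$ is the character of the maximal Cohen-Macaulay subcurve, $B\in\Pi^{\PT}(\lambda,\mu,\nu,\rho)$ is the ``embedded'' box-configuration recording monomials lying in the legs but outside the CM support, and $\pi_0$ is a point-like solid partition recording the remaining monomials (those lying in no leg). The renormalized volumes are compatible under this splitting, $|\pi|=|\pi_{\mathrm{CM}}|+|B|+|\pi_0|$. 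Substituting this into the formula \eqref{defVprelim} for $\tr_{-\dR\Hom(E,E)_0}$, and using the quadratic dependence on $Z$, the DT vertex splits as
\begin{align*}
\mathsf{V}_\pi^{\DT} \;=\; \mathsf{V}_B^{\PT} \,+\, \mathsf{V}_{\pi_0}^{\DT,\,\varnothing\varnothing\varnothing\varnothing} \,+\, \Delta(\pi_{\mathrm{CM}},B,\pi_0),
\end{align*}
where $\Delta$ is a bilinear interaction term, symmetric under the Serre-duality involution $Z\mapsto\bar Z/t_1t_2t_3t_4$.

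Given a sufficiently clean character factorization, I would next take the $T$-equivariant half-Euler class. Since $\Delta$ is Serre self-dual, $e_T(-\Delta)$ is a perfect square up to sign, and the CY restriction $\lambda_1+\lambda_2+\lambda_3+\lambda_4=0$ should force the corresponding factor to equal $1$ (or at worst a sign), expressing
\begin{align*}
\sqrt{(-1)^{a_\pi} e_T(-\mathsf{V}^{\DT}_\pi)} \;=\; \epsilon(\pi,B,\pi_0)\,\sqrt{(-1)^{a_B} e_T(-\mathsf{V}^{\PT}_B)}\,\sqrt{(-1)^{a_{\pi_0}} e_T(-\mathsf{V}^{\DT,\varnothing}_{\pi_0})},
\end{align*}
for some $\epsilon\in\{\pm1\}$. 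Summing over $\pi$ and reindexing as a sum over pairs $(B,\pi_0)$ via the bijection $\Pi^{\DT}(\lambda,\mu,\nu,\rho)\leftrightarrow\Pi^{\PT}(\lambda,\mu,\nu,\rho)\times\Pi^{\DT}(\varnothing,\varnothing,\varnothing,\varnothing)$ induced by the splitting yields the conjectured identity, with the DT signs defined by $o(\lL)|_\pi := o(\lL)|_B + o(\lL)|_{\pi_0} + \epsilon(\pi,B,\pi_0) \bmod 2$. Uniqueness up to an overall sign at each order in $q$ should reduce, once Nekrasov's signs are fixed, to linear independence of the half-Euler contributions as elements of $\QQ(\lambda_1,\lambda_2,\lambda_3)$ — which is exactly the computational content of Proposition \ref{intro evidencethm}.

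The main obstacle is the step verifying that the cross-term $\Delta$ contributes trivially on the Calabi-Yau torus. In the 3-fold case no such pointwise splitting exists, and Bridgeland-Toda instead use a motivic Hall algebra identity that holds before fixed-point localization; concretely, a DT torus-fixed point is \emph{not} canonically a product of PT data and point data, and the correspondence is visible only after taking generating series. The content of our conjecture is the \emph{numerical} miracle that, after evaluating half-Euler classes on the CY-torus (a 4-fold phenomenon in the spirit of Nekrasov's formula and the cohomological integrality of \eqref{nek conj intro}), the factorization nonetheless holds term by term. I would therefore expect an honest proof to require a Joyce-Song-type Hall algebra and wall-crossing theory compatible with the Borisov-Joyce virtual class, a piece of machinery currently unavailable; in its absence the conjecture seems accessible only through direct combinatorial verification, as in Proposition \ref{intro evidencethm}.
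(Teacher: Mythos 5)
This statement is a \emph{conjecture} in the paper: the authors offer no proof, only computational evidence (Proposition \ref{evidencethm}, obtained by a Maple implementation of the vertex formalism) together with the consistency checks of Section \ref{examplessec}. So the honest baseline for comparison is that your final paragraph — conceding that no proof is currently available and that the statement is accessible only by direct verification — is the correct conclusion. The problem is that the first two paragraphs of your proposal present an argument that is not merely incomplete but wrong at its first step.

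The ``canonical additive splitting'' $Z_\pi = Z_{\pi_{\mathrm{CM}}} + Z_B + Z_{\pi_0}$, and the induced bijection $\Pi^{\DT}(\lambda,\mu,\nu,\rho) \leftrightarrow \Pi^{\PT}(\lambda,\mu,\nu,\rho) \times \Pi^{\DT}(\varnothing,\varnothing,\varnothing,\varnothing)$ compatible with renormalized volume, does not exist. Take $\lambda = \{\lambda_{11}=1\}$ and $\mu=\nu=\rho=\varnothing$. At renormalized volume $1$ there are exactly three solid partitions in $\Pi^{\DT}(\lambda,\varnothing,\varnothing,\varnothing)$ (one extra box at the origin in each of the three transverse directions; a box further along the leg forces a chain of boxes by the monomial-ideal condition, hence has larger volume). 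On the other side, $\Pi^{\PT}(\lambda,\varnothing,\varnothing,\varnothing)$ consists of the single chains in $\mathrm{I}^-$ of each length $k$, so the pairs $(B,\pi_0)$ with $|B|+|\pi_0|=1$ number $1\cdot 1 + 1\cdot 1 = 2 \neq 3$. The DT/PT vertex identity is an identity of \emph{weighted} generating series with massive cancellation, not a fixed-point-by-fixed-point matching — this is already so for the 3-fold vertex. Note also that PT box configurations contain boxes in $\mathrm{I}^-$ (negative coordinates, living in the localized module $M/\langle(1,1,1,1)\rangle$) which have no counterpart in any DT monomial ideal, so no splitting of DT characters can produce them. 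Finally, even granting a character splitting, your claim that the Serre-self-dual cross-term $\Delta$ has half-Euler class $\pm 1$ on the Calabi-Yau torus is unsupported and false in general: self-duality makes $(-1)^{?}e_T(-\Delta)$ a perfect square in $\QQ(\lambda_1,\lambda_2,\lambda_3)$, not equal to $1$; the nontrivial interaction between the point contributions and the curve is exactly where the content of the conjecture lies. In short: the only part of your proposal that survives is its last paragraph, which agrees with the paper that the statement remains open and is currently supported only by the low-order checks of Proposition \ref{evidencethm}.
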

By using an implementation into Maple, we verified the following cases:
\begin{prop}\label{evidencethm} 
There are choices of signs such that 
$$\widetilde{\mathsf{V}}_{\lambda\mu\nu\rho}^{\DT}(q) = \widetilde{\mathsf{V}}_{\lambda\mu\nu\rho}^{\PT}(q) \, \mathsf{V}_{\varnothing\varnothing\varnothing\varnothing}^{\DT}(q) \mod q^N
$$
in the following cases:
\begin{itemize}
\item for any $|\lambda| + |\mu| + |\nu| + |\rho| \leqslant 1$ and $N=4$,
\item for any $|\lambda| + |\mu| + |\nu| + |\rho| \leqslant 2$ and $N=4$,
\item for any $|\lambda| + |\mu| + |\nu| + |\rho| \leqslant 3$ and $N=3$,
\item for any $|\lambda| + |\mu| + |\nu| + |\rho| \leqslant 4$ and $N=3$.
\end{itemize}
In each of these cases, the uniqueness statement of Conjecture \ref{affine DT/PT conj} holds.
\end{prop}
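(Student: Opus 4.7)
The proof is entirely computational, so the plan is to implement the vertex formalism of Section \ref{vertex section} in Maple and check the claimed identities degree by degree in $q$. First, for each quadruple of finite plane partitions $(\lambda,\mu,\nu,\rho)$ with $|\lambda|+|\mu|+|\nu|+|\rho| \leqslant 4$ (with at most two of them non-empty in the PT cases), I would enumerate combinatorially the relevant $T$-fixed loci, bounded by the cut-off in $q$:
\begin{itemize}
\item For the DT side, generate all solid partitions $\pi \in \Pi^{\DT}(\lambda,\mu,\nu,\rho)$ with renormalized volume $|\pi| \leqslant |\pi_{\mathrm{CM}}(\lambda,\mu,\nu,\rho)|+N-1$, extending the maximal Cohen-Macaulay solid partition \eqref{CMsolid} by finitely many embedded/thickened boxes.
\item For the PT side, generate all box configurations $B \subseteq \mathrm{II}\cup\mathrm{III}\cup\mathrm{IV}\cup\mathrm{I}^-$ satisfying the ``gravity'' condition of Section \ref{PTfix}, with $|B| \leqslant N-1$.
\end{itemize}

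Next, for each $\pi$ or $B$, compute the character $Z_\pi$ (resp. $Z_B$) from \eqref{Zalpha} (resp. \eqref{PTZalpha}), then the Laurent polynomial $\mathsf{V}^{\DT}_\pi$ (resp. $\mathsf{V}^{\PT}_B$) using \eqref{defVprelim}, \eqref{defF}, and the redistribution \eqref{defV}. Concretely, the prelimary expression \eqref{defVprelim} is a rational function, so the code must cancel the denominator against the explicit expression for $Z_\pi$ to produce a genuine Laurent polynomial, and then check (as a sanity check) that $\overline{\mathsf{V}}_\pi = \mathsf{V}_\pi \cdot t_1t_2t_3t_4$. Substituting $t_i = e^{\lambda_i}$ and expanding to top order, reduce modulo $\lambda_1+\lambda_2+\lambda_3+\lambda_4$ to obtain $e_T(-\mathsf{V}_\pi) \in \QQ(\lambda_1,\lambda_2,\lambda_3)$, then extract a square root (whose existence and uniqueness up to sign is guaranteed by the self-dual structure on $\mathsf{V}_\pi$ and the fact that $\mathsf{V}_\pi$ has no $T$-fixed ``$+$'' terms, as noted in the footnote of the vertex definition).

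Once the individual square roots $\sqrt{(-1)^{a_\pi} e_T(-\mathsf{V}^{\DT}_\pi)}$ and $\sqrt{(-1)^{a_B} e_T(-\mathsf{V}^{\PT}_B)}$ are computed, I would form the generating series $\widetilde{\mathsf{V}}^{\DT}_{\lambda\mu\nu\rho}(q)$ and $\widetilde{\mathsf{V}}^{\PT}_{\lambda\mu\nu\rho}(q) \cdot \mathsf{V}^{\DT}_{\varnothing\varnothing\varnothing\varnothing}(q)$ symbolically as polynomials in $q$ whose coefficients are elements of $\QQ(\lambda_1,\lambda_2,\lambda_3)$, each term carrying an unknown sign $\varepsilon_\pi,\varepsilon_B \in \{\pm 1\}$. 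For each degree $d < N$ in $q$, this yields a finite linear system over the signs: matching the $q^d$-coefficient on both sides is a single equation in $\QQ(\lambda_1,\lambda_2,\lambda_3)$ relating an integer linear combination of signs on the DT side to one on the PT side (with signs from lower orders already fixed). Solve for the signs sequentially, order by order in $q$, starting from the (unique) Nekrasov signs fixing $\mathsf{V}^{\DT}_{\varnothing\varnothing\varnothing\varnothing}(q)$; existence of a consistent solution is the content of the Proposition, and the observation that at each order each new sign appears in a nontrivial rational function coefficient (which must vanish or take a prescribed value) yields the uniqueness up to overall sign.

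The main obstacle is combinatorial explosion: even at low $N$, the number of solid partitions with asymptotics of size up to $4$ grows rapidly, and each contributes a square-root computation over $\QQ(\lambda_1,\lambda_2,\lambda_3)$. This is why $N$ must be decreased as $|\lambda|+|\mu|+|\nu|+|\rho|$ grows. The case with $|\lambda|+|\mu|+|\nu|+|\rho|=4$ contains the genuine asymptotic plane partition of height two (a $2\times 2$ square thickened once), whose associated curve already carries nontrivial embedded structure; handling this case requires care because the splitting of $\oO_C$ into trivial line bundle contributions used implicitly to identify $T$-weight multiplicities is no longer diagonal in the naive monomial basis. Aside from this, the verification is straightforward: run the algorithm, record the signs that work at each order, and report the range of $(|\lambda|+|\mu|+|\nu|+|\rho|,N)$ pairs successfully verified.
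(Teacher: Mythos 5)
Your proposal is correct and matches the paper's approach: the paper's entire ``proof'' of this proposition is the statement that the identities were verified by a Maple implementation of the vertex formalism of Section \ref{vertex section}, which is precisely the enumeration of solid partitions and box configurations, character/redistribution computation, square-root extraction, and order-by-order sign solving that you describe. The only quibble is your final worry about the height-two asymptotic plane partition: nothing special happens there, since the weight decomposition of $\oO_C$ is still read off directly from the monomial description \eqref{CMsolid} and \eqref{Zalpha}, so that case is handled by exactly the same code path.
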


\subsection{Primary insertions}

Let $X$ be a toric Calabi-Yau 4-fold and $\gamma \in H_T^*(X,\QQ)$. Consider $[E] \in M^{(\CC^*)^4}$, where either $([E],M) = (Z, I_n(X,\beta))$ or $([E],M) = ([I^\mdot = \{\oO_X \rightarrow F\}], P_n(X,\beta))$. Denote the underlying Cohen-Macaulay curve by $C$. We are interested in the restriction
$$
\tau(\gamma) \big|_{[E]}\,\,.
$$
Going around the following diagram both ways
\begin{displaymath}
\xymatrix
{
\{[E]\} \times X \ar_{\int_X}[d] \ar@{^(->}[r] & M \times X \ar^{\pi_M}[d] \ar^{\quad \pi_X}[r] & X  \\
\bullet \ar[r] & M &
}
\end{displaymath}
we find
\begin{equation} \label{tau1}
\tau(\gamma) \big|_{[E]} = \int_X \gamma \cdot \ch^T_3(\oO_C),
\end{equation}
where $\int_X$ denotes equivariant push-forward to a point $\bullet$. Crucially, we used
\begin{equation} \label{chmatch}
\ch^T_3(\oO_Z) = \ch^T_3(\oO_C) = \ch^T_3(F)
\end{equation}
so $\tau(\gamma) |_{[E]}$ is given by the same expression in the DT and PT case. This is no longer the case for higher descendent insertions, which are not considered in this paper. Applying Atiyah-Bott localization to \eqref{tau1} gives
\begin{equation} \label{taufinal}
\tau(\gamma)  \big|_{[E]} = \sum_{\alpha \in V(X)} \ch^T_3( Z_\alpha \cdot (1-t^{(\alpha)}_1)(1-t^{(\alpha)}_2)(1-t^{(\alpha)}_3)(1-t^{(\alpha)}_4)) \cdot \frac{\gamma\big|_{p_\alpha}}{e_T(T_{X}|_{p_\alpha})},
\end{equation}
where $t_i^{(\alpha)}$ are the characters of the $(\CC^*)^4$-action on chart $U_\alpha$ and $Z_\alpha$ is given by \eqref{Zalpha} with $\pi^{(\alpha)}$ the solid partition corresponding to $C|_{U_\alpha}$. Insertions for stable pair invariants were studied on 3-folds in \cite[Sect.~6]{PT2}.

Similar to the previous paragraph, the vertex formalism can also be used to calculate arbitrary  equivariant curve counting and stable pairs invariants with primary insertions on toric Calabi-Yau 4-folds. Again we illustrate this for $X$ the total space of $\oO_{\PP^2}(-1) \oplus \oO_{\PP^2}(-2)$ and $\beta = d\,[\PP^1]$. For any point-like plane partitions $\lambda, \mu, \nu$, denote by $C_{\lambda\mu\nu}$ the $(\CC^*)^4$-invariant Cohen-Macaulay curve supported on the $(\CC^*)^4$-invariant lines $\PP^1 \cup \PP^1 \cup \PP^1 \subseteq \PP^2 \subseteq X$ and with ``cross-sections'' $\lambda, \mu, \nu$ along these lines. We denote its character in chart $U_\alpha$ by $(C_{\lambda\mu\nu})_\alpha$. Then Lemma \ref{chi}, the vertex formalism (Section \ref{vertex section}), and \eqref{taufinal} imply 
\begin{align*}
&I_{\beta}(X;T)(\gamma_1, \ldots, \gamma_m)=\sum_{\lambda,\mu,\nu \atop |\lambda|+|\mu|+|\nu| = d} q^{f_{1,-1,-2}(\lambda)+f_{1,-1,-2}(\mu)+f_{1,-1,-2}(\nu)} \mathsf{E}^{\DT}_{\lambda}|_{(\lambda_1,\lambda_2,\lambda_3,\lambda_4)} \mathsf{V}_{\lambda\mu\varnothing\varnothing}^{\DT}|_{(\lambda_1,\lambda_2,\lambda_3,\lambda_4)} \\
&\cdot \mathsf{E}^{\DT}_{\mu} |_{(\lambda_2,\lambda_1,\lambda_3,\lambda_4)} \mathsf{V}^{\DT}_{\mu\nu\varnothing\varnothing} |_{(-\lambda_2,\lambda_1-\lambda_2,\lambda_3+\lambda_2,\lambda_4+2\lambda_2)} \mathsf{E}^{\DT}_{\nu}|_{(\lambda_1 - \lambda_2, -\lambda_2,\lambda_3+\lambda_2,\lambda_4+2\lambda_2)} \mathsf{V}^{\DT}_{\nu\lambda\varnothing\varnothing}|_{(\lambda_2-\lambda_1,-\lambda_1,\lambda_3+\lambda_1,\lambda_4+2\lambda_1)} \\
&\cdot \Bigg(  \prod_{i=1}^{m} \sum_{\alpha \in V(X)} \ch^T_3\big((C_{\lambda\mu\nu})_\alpha (1-t_1^{(\alpha)})(1-t_2^{(\alpha)})(1-t_3^{(\alpha)})(1-t_4^{(\alpha)}) \big) \frac{\gamma_i \big|_{p_\alpha}}{e_T(T_{X}|_{p_{\alpha}})} \Bigg), 
\end{align*}
for all $\gamma_1, \ldots, \gamma_m \in H_T^*(X,\QQ)$. The choice of signs for $I_\beta(X;T)(\gamma_1, \ldots, \gamma_m)$ are determined by RHS. The same expression holds for $P_{\beta}(X;T)(\gamma_1, \ldots, \gamma_m)$ when replacing $\DT$ by $\PT$.

\subsection{Consequences}

The DT/PT vertex correspondence (Conjecture \ref{affine DT/PT conj}) implies the following equivariant DT/PT correspondence for toric Calabi-Yau 4-folds.
\begin{thm}\label{affine implies toric}
Assume Conjecture \ref{affine DT/PT conj} holds. Let $X$ be a toric Calabi-Yau 4-fold, $\beta \in H_2(X)$ such that $P_n(X,\beta)^{(\CC^*)^4}$ is at most 0-dimensional for all $n \in \ZZ$, and let
$\gamma_1, \ldots, \gamma_m \in H^*_T(X)$. Then there exist choices of signs such that
\begin{align*}
\frac{I_{\beta}(X;T)(\gamma_1, \ldots, \gamma_m)}{I_{0}(X;T)}=P_{\beta}(X;T)(\gamma_1, \ldots, \gamma_m).
\end{align*}
In particular, 
without insertions we have
\begin{align*}
\frac{I_{\beta}(X;T)}{I_{0}(X;T)}= P_{\beta}(X;T).
\end{align*}
\end{thm}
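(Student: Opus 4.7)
The plan is to reduce the asserted identity to the vertex-level correspondence (Conjecture \ref{affine DT/PT conj}) using the vertex formalism developed in Section \ref{vertex section}. First, I would expand both $I_\beta(X;T)(\gamma_1, \ldots, \gamma_m)$ and $P_\beta(X;T)(\gamma_1, \ldots, \gamma_m)$ in the manner illustrated for $\mathrm{Tot}_{\PP^2}(\oO(-1) \oplus \oO(-2))$, namely as sums over collections $\boldsymbol{\lambda} = \{\lambda^{(\alpha\beta)}\}_{\alpha\beta \in E(X)}$ of finite plane partitions, satisfying the class constraint imposed by Lemma \ref{chi}. For each such $\boldsymbol{\lambda}$, the summand factorizes into (i) a product of edge contributions $\prod_{\alpha\beta} \mathsf{E}_{\lambda^{(\alpha\beta)}}$, which are identical on the DT and PT sides since $\mathsf{E}^{\DT}_{\lambda} = \mathsf{E}^{\PT}_{\lambda}$; (ii) an insertion factor computed by \eqref{taufinal}, which by \eqref{chmatch} depends only on the Cohen--Macaulay curve determined by $\boldsymbol{\lambda}$ and hence agrees in both cases; and (iii) a product over vertices of $\mathsf{V}^{\DT}_{\lambda^{(\alpha\beta_1)}\cdots\lambda^{(\alpha\beta_4)}}(q)$ evaluated in the local toric coordinates at $\alpha$ in the DT case, and the analogue with $\mathsf{V}^{\PT}$ in the PT case.

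Next, I would invoke Proposition \ref{fewlegs}: the assumption that $P_n(X,\beta)^{(\CC^*)^4}$ is at most $0$-dimensional for all $n \in \ZZ$ forces, for every $\boldsymbol{\lambda}$ contributing nontrivially to either side, at most two of the four asymptotic partitions at each vertex $\alpha$ to be non-empty. Hence Conjecture \ref{affine DT/PT conj} applies vertex by vertex, yielding
\[
\mathsf{V}^{\DT}_{\lambda^{(\alpha\beta_1)}\cdots\lambda^{(\alpha\beta_4)}}(q) \;=\; \mathsf{V}^{\PT}_{\lambda^{(\alpha\beta_1)}\cdots\lambda^{(\alpha\beta_4)}}(q) \cdot \mathsf{V}^{\DT}_{\varnothing\varnothing\varnothing\varnothing}(q)
\]
at every $\alpha \in V(X)$, in the local torus weights on $U_\alpha$. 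Substituting this vertex-by-vertex into the expansion of $I_\beta(X;T)(\gamma_1,\ldots,\gamma_m)$ extracts a factor
\[
\prod_{\alpha \in V(X)} \mathsf{V}^{\DT}_{\varnothing\varnothing\varnothing\varnothing}(q)\big|_{\mathrm{chart}_\alpha}
\]
from every term, independently of $\boldsymbol{\lambda}$; pulling this out of the sum leaves precisely the vertex-formalism expression for $P_\beta(X;T)(\gamma_1,\ldots,\gamma_m)$.

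Finally, I would identify the extracted factor with $I_0(X;T)$. When $\beta = 0$, the $T$-fixed locus of $I_n(X,0)$ consists of tuples of point-like solid partitions, one per vertex, with no edge data at all, so the vertex formalism reduces exactly to $I_0(X;T) = \prod_{\alpha \in V(X)} \mathsf{V}^{\DT}_{\varnothing\varnothing\varnothing\varnothing}(q)\big|_{\mathrm{chart}_\alpha}$. Dividing gives the asserted identity, and the second assertion (without insertions) is simply the specialization $m = 0$.

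The main obstacle is sign compatibility. In the vertex formalism, each $T$-fixed point of $I_n(X,\beta)$, $P_n(X,\beta)$, or $I_n(X,0)$ decomposes into independent per-vertex data (a solid partition or box configuration per $\alpha \in V(X)$) and per-edge data (the finite plane partitions $\lambda^{(\alpha\beta)}$). The square roots defining the equivariant invariants therefore split as products of per-vertex and per-edge square roots, each carrying its own $\ZZ/2$-torsor of sign choices. The edge square roots occur identically on both sides and cancel. Consequently the only sign choices that matter are the per-vertex ones inside each $\mathsf{V}^{\DT}$, $\mathsf{V}^{\PT}$, and $\mathsf{V}^{\DT}_{\varnothing\varnothing\varnothing\varnothing}$, and these live in independent $\ZZ/2$-torsors indexed by the torus-fixed solid partitions/box configurations. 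One then simply declares the vertex signs for $I_\beta(X;T)$, $P_\beta(X;T)$, and $I_0(X;T)$ to be the ones supplied by Conjecture \ref{affine DT/PT conj} at every vertex; this is the coherent choice required for the identity to hold.
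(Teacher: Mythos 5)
Your proposal is correct and follows essentially the same route as the paper: expand both generating series via the vertex formalism, use $\mathsf{E}^{\DT}_{\lambda}=\mathsf{E}^{\PT}_{\lambda}$ and \eqref{chmatch} to match edge and insertion factors, apply Conjecture \ref{affine DT/PT conj} vertex by vertex to extract $\prod_{\alpha}\mathsf{V}^{\DT}_{\varnothing\varnothing\varnothing\varnothing}|_{\mathrm{chart}_\alpha}=I_0(X;T)$. The paper merely carries this out explicitly for $\mathrm{Tot}_{\PP^2}(\oO(-1)\oplus\oO(-2))$ and asserts the general case "follows similarly," whereas you make explicit the role of Proposition \ref{fewlegs} in guaranteeing that the PT vertex is defined at every vertex; that is a worthwhile clarification but not a different argument.
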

\begin{proof}
In order to keep the notation sufficiently simple, we consider the case $X$ is the total space of $\mathrm{Tot}_{\mathbb{P}^2}(\oO(-1) \oplus \oO(-2))$ and $\beta=d\,[\PP^1]$. The general case follows similarly, after setting up the right notation. As in the previous paragraph, for any finite plane partitions $\lambda, \mu, \nu$, let $C_{\lambda\mu\nu}$ be the $(\CC^*)^4$-invariant Cohen-Macaulay curve supported on the $(\CC^*)^4$-invariant lines $\PP^1 \cup \PP^1 \cup \PP^1 \subseteq \PP^2 \subseteq X$ and with ``cross-sections'' $\lambda, \mu, \nu$ along these lines. We denote its character in chart $U_\alpha$ by $(C_{\lambda\mu\nu})_\alpha$. Then Conjecture \ref{affine DT/PT conj} implies that there exist choices of signs such that
\begin{align*}
&I_{\beta}(X;T)(\gamma_1, \ldots, \gamma_m)=\sum_{\lambda,\mu,\nu \atop |\lambda|+|\mu|+|\nu| = d} q^{f_{1,-1,-2}(\lambda)+f_{1,-1,-2}(\mu)+f_{1,-1,-2}(\nu)} \mathsf{E}^{\DT}_{\lambda}|_{(\lambda_1,\lambda_2,\lambda_3,\lambda_4)} \mathsf{V}_{\lambda\mu\varnothing\varnothing}^{\DT}|_{(\lambda_1,\lambda_2,\lambda_3,\lambda_4)} \\
&\cdot \mathsf{E}^{\DT}_{\mu} |_{(\lambda_2,\lambda_1,\lambda_3,\lambda_4)} \mathsf{V}^{\DT}_{\mu\nu\varnothing\varnothing} |_{(-\lambda_2,\lambda_1-\lambda_2,\lambda_3+\lambda_2,\lambda_4+2\lambda_2)} \mathsf{E}^{\DT}_{\nu}|_{(\lambda_1 - \lambda_2, -\lambda_2,\lambda_3+\lambda_2,\lambda_4+2\lambda_2)} \mathsf{V}^{\DT}_{\nu\lambda\varnothing\varnothing}|_{(\lambda_2-\lambda_1,-\lambda_1,\lambda_3+\lambda_1,\lambda_4+2\lambda_1)} \\
&\cdot \Bigg(  \prod_{i=1}^{m} \sum_{\alpha \in V(X)} \ch^T_3\big((C_{\lambda\mu\nu})_\alpha (1-t_1^{(\alpha)})(1-t_2^{(\alpha)})(1-t_3^{(\alpha)})(1-t_4^{(\alpha)})\big) \frac{\gamma_i \big|_{p_\alpha}}{e_T(T_{X}|_{p_{\alpha}})} \Bigg), \\
&= \mathsf{V}_{\varnothing\varnothing\varnothing\varnothing}^{\DT}\Big|_{(\lambda_1,\lambda_2,\lambda_3,\lambda_4)}  \mathsf{V}^{\DT}_{\varnothing\varnothing\varnothing\varnothing}|_{(-\lambda_2,\lambda_1-\lambda_2,\lambda_3+\lambda_2,\lambda_4+2\lambda_2)}  \mathsf{V}^{\DT}_{\varnothing\varnothing\varnothing\varnothing} |_{(\lambda_2-\lambda_1,-\lambda_1,\lambda_3+\lambda_1,\lambda_4+2\lambda_1)}  \\
&\cdot \sum_{\lambda,\mu,\nu \atop |\lambda|+|\mu|+|\nu| = d} q^{f_{1,-1,-2}(\lambda)+f_{1,-1,-2}(\mu)+f_{1,-1,-2}(\nu)} \mathsf{E}^{\PT}_{\lambda}|_{(\lambda_1,\lambda_2,\lambda_3,\lambda_4)} \mathsf{V}_{\lambda\mu\varnothing\varnothing}^{\PT}|_{(\lambda_1,\lambda_2,\lambda_3,\lambda_4)} \\
&\cdot \mathsf{E}^{\PT}_{\mu} |_{(\lambda_2,\lambda_1,\lambda_3,\lambda_4)} \mathsf{V}^{\PT}_{\mu\nu\varnothing\varnothing} |_{(-\lambda_2,\lambda_1-\lambda_2,\lambda_3+\lambda_2,\lambda_4+2\lambda_2)} \mathsf{E}^{\PT}_{\nu}|_{(\lambda_1 - \lambda_2, -\lambda_2,\lambda_3+\lambda_2,\lambda_4+2\lambda_2)} \mathsf{V}^{\PT}_{\nu\lambda\varnothing\varnothing}|_{(\lambda_2-\lambda_1,-\lambda_1,\lambda_3+\lambda_1,\lambda_4+2\lambda_1)} \\ 
&\cdot \Bigg(  \prod_{i=1}^{m} \sum_{\alpha \in V(X)} \ch^T_3\big((C_{\lambda\mu\nu})_\alpha (1-t_1^{(\alpha)})(1-t_2^{(\alpha)})(1-t_3^{(\alpha)})(1-t_4^{(\alpha)})\big) \frac{\gamma_i \big|_{p_\alpha}}{e_T(T_{X}|_{p_{\alpha}})} \Bigg), \\
&= I_{0}(X;T)\cdot P_{\beta}(X;T)(\gamma_1, \ldots, \gamma_m).
\end{align*}
Besides Conjecture \ref{affine DT/PT conj}, this uses the fact that $\mathsf{E}^{\DT}_{\lambda} = \mathsf{E}^{\PT}_{\lambda}$ (the DT/PT edges coincide) and $\tau(\gamma)|_{Z} = \tau(\gamma)|_{[I^{\mdot}]}$ for any $Z \in I_n(X,\beta)^{(\CC^*)^4}$ and $[I^\mdot] \in P_n(X,\beta)^{(\CC^*)^4}$ with the same underlying Cohen-Macaulay curve \eqref{chmatch}. 
\end{proof}

The next two theorems can be seen as good evidence for Conjecture \ref{conj DT/PT}. 
\begin{thm} \label{mainthm1}
Let $X$ be a toric Calabi-Yau 4-fold and $\beta \in H_2(X)$. Suppose $P_n(X, \beta)$ is proper and $P_n(X, \beta)^{(\CC^*)^4}$ is at most 0-dimensional for all $n \in \ZZ$. Assume the following:
\begin{enumerate} 
\item the DT/PT vertex correspondence (Conjecture \ref{affine DT/PT conj}) holds,
\item \eqref{virloc1} holds for $\beta$ and $n=0$, and \eqref{virloc2} holds for $\beta$ and all $n <0$, 
\item the signs of (1) and (2) can be chosen compatibly. 
\end{enumerate}
Then
$
I_{0,\beta}(X;T)=P_{0,\beta}(X) \in \ZZ.
$
\end{thm}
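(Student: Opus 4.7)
The plan is to extract the conclusion as the $q^0$ coefficient of the equivariant generating series identity furnished by Theorem \ref{affine implies toric}. First, since Conjecture \ref{affine DT/PT conj} is assumed by hypothesis (1), Theorem \ref{affine implies toric} applies and yields, for a suitable choice of signs, the identity
\begin{equation*}
I_{\beta}(X;T) = I_{0}(X;T)\cdot P_{\beta}(X;T)
\end{equation*}
in $\QQ(\lambda_1,\lambda_2,\lambda_3)(\!(q)\!)$.

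Next, I would compare coefficients of $q^0$. The series $I_{0}(X;T)=\sum_{n\geqslant 0} I_{n,0}(X;T)\,q^n$ has $I_{0,0}(X;T)=1$ by convention and vanishes for $n<0$, since $\Hilb^n(X)$ is empty in that case. For the stable pair series, hypothesis (2) gives $P_{n,\beta}(X;T)=0$ for every $n<0$ (with the signs determined by the chosen orientations), so $P_{\beta}(X;T)\in \QQ(\lambda_1,\lambda_2,\lambda_3)[\![q]\!]$. Multiplying these two formal power series in $q$ and extracting the coefficient of $q^0$, the only contribution comes from $k=l=0$ in the convolution $\sum_{k+l=0} I_{k,0}(X;T)\cdot P_{l,\beta}(X;T)$, and we obtain
\begin{equation*}
I_{0,\beta}(X;T) = I_{0,0}(X;T)\cdot P_{0,\beta}(X;T) = P_{0,\beta}(X;T).
\end{equation*}

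Finally, the second half of hypothesis (2) is the virtual localization formula \eqref{virloc1} for $\beta$ and $n=0$, which says $P_{0,\beta}(X;T)=P_{0,\beta}(X)\in\ZZ$ for an appropriate choice of signs at the isolated torus-fixed stable pairs. Combined with the previous displayed equality, this gives the desired identity $I_{0,\beta}(X;T)=P_{0,\beta}(X)\in\ZZ$.

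The main obstacle is not any of the individual steps, each of which is a direct application of an assumption, but rather the consistency of the sign choices throughout. Theorem \ref{affine implies toric} requires a particular choice of signs on the DT side, on the PT side, and on $I_{0}(X;T)$; and the virtual localization identities of hypothesis (2) each require their own choices of signs. Hypothesis (3) is precisely the statement that all these local orientation choices can be made simultaneously, and under that assumption the chain of equalities above is literal rather than only up to sign. This sign bookkeeping is the one nontrivial point in the argument.
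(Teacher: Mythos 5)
Your proposal is correct and follows essentially the same route as the paper: apply Theorem \ref{affine implies toric}, extract the $q^0$ coefficient of the convolution $\sum_k I_{k,0}(X;T)\cdot P_{-k,\beta}(X;T)$, kill the $k>0$ terms using \eqref{virloc2}, and conclude with \eqref{virloc1}, with hypothesis (3) handling the sign compatibility. No gaps.
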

\begin{proof}
Theorem \ref{affine implies toric} implies
\begin{equation*}
I_{0,\beta}(X;T)=\sum_{k=0}^{\infty}I_{k,0}(X;T)\cdot 
P_{-k,\beta}(X;T),
\end{equation*}
which is a finite sum because $P_{-k}(X,\beta)^{(\CC^*)^4} = \varnothing$ for $k \gg 0$. 
For all $k>0$, 
\eqref{virloc2} implies
$$P_{-k,\beta}(X;T)=0. $$
We conclude $I_{0,\beta}(X;T)=P_{0,\beta}(X;T) = P_{0,\beta}(X) \in \ZZ$, where the second equality follows from the localization formula \eqref{virloc1}.
\end{proof}

\begin{thm} \label{mainthm2}
Let $X$ be a toric Calabi-Yau 4-fold satisfying $\int_X c_3^T(X) = 0$ and let $\beta \in H_2(X)$. Let $\gamma_1, \ldots, \gamma_m \in H^*(X,\ZZ)$ admitting $T$-equivariant lifts and satisfying $\sum_i \deg \tau(\gamma_i) = 2n > 0$. Suppose $P_\chi(X, \beta)$ is proper and $P_\chi(X, \beta)^{(\CC^*)^4}$ is at most 0-dimensional for all $\chi \in \ZZ$. Assume the following:
\begin{enumerate} 
\item the DT/PT vertex correspondence (Conjecture \ref{affine DT/PT conj}) holds,
\item Nekrasov's conjecture (Conjecture \ref{nekrasov conj}) holds,
\item \eqref{virloc1} holds for $\beta$, $\gamma_1, \ldots, \gamma_m$, $n$, 
\item the signs of (1), (2), and (3) can be chosen compatibly. 
\end{enumerate}
Then 
$
I_{n,\beta}(X;T)(\gamma_1, \ldots, \gamma_m)=P_{n,\beta}(X)(\gamma_1, \ldots, \gamma_m) \in \ZZ.
$
\end{thm}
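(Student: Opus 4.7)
The plan is to imitate the proof of Theorem \ref{mainthm1}, exploiting the multiplicative structure coming from the (conjectural) equivariant DT/PT correspondence of Theorem \ref{affine implies toric}, and then kill the $I_0(X;T)$ factor using the hypothesis $\int_X c_3^T(X)=0$ together with Nekrasov's conjecture. First I would invoke Theorem \ref{affine implies toric} (which requires assumption (1) and the hypothesis that $P_\chi(X,\beta)^{(\CC^*)^4}$ is at most $0$-dimensional for all $\chi$) to get
\[
I_{\beta}(X;T)(\gamma_1,\ldots,\gamma_m) \;=\; I_{0}(X;T)\cdot P_{\beta}(X;T)(\gamma_1,\ldots,\gamma_m)
\]
as an identity of Laurent series in $q$ over $\QQ(\lambda_1,\lambda_2,\lambda_3)$, provided the signs on both sides are chosen compatibly (this is where assumption (4) enters).

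Next I would compute $I_0(X;T)$ explicitly. Under assumption (2), the vertex formalism for $\beta=0$ combined with Atiyah--Bott localization on $X$ gives
\[
I_{0}(X;T) \;=\; \exp\!\Bigl(-q\int_X c_3^T(X)\Bigr),
\]
as stated immediately after Conjecture \ref{nekrasov conj}. Since by hypothesis $\int_X c_3^T(X)=0$, this collapses to $I_0(X;T)=1$, i.e.\ $I_{0,0}(X;T)=1$ and $I_{k,0}(X;T)=0$ for all $k>0$. Substituting back yields the identity of generating series
\[
I_{\beta}(X;T)(\gamma_1,\ldots,\gamma_m) \;=\; P_{\beta}(X;T)(\gamma_1,\ldots,\gamma_m).
\]

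Extracting the coefficient of $q^n$ gives $I_{n,\beta}(X;T)(\gamma_1,\ldots,\gamma_m)=P_{n,\beta}(X;T)(\gamma_1,\ldots,\gamma_m)$. Finally, by assumption (3), the virtual localization formula \eqref{virloc1} applies to $\beta$, $\gamma_1,\ldots,\gamma_m$, $n$, and identifies $P_{n,\beta}(X;T)(\gamma_1,\ldots,\gamma_m)$ with the non-equivariant integer invariant $P_{n,\beta}(X)(\gamma_1,\ldots,\gamma_m)\in\ZZ$. Combining everything gives the claimed equality $I_{n,\beta}(X;T)(\gamma_1,\ldots,\gamma_m)=P_{n,\beta}(X)(\gamma_1,\ldots,\gamma_m)\in\ZZ$; in particular the a priori rational function on the left is forced to be an integer.

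The main obstacle --- and the reason for assumption (4) --- is bookkeeping of the orientations. Each of (1), (2) and (3) is only stated up to a choice of sign at each torus fixed point (in $I_\chi(\CC^4,\beta)^T$, $\mathrm{Hilb}^k(\CC^4)^T$, and $P_n(X,\beta)^T$ respectively), and these choices must be made compatibly if the three equalities are to be chained as above. I do not see any direct global construction producing a single coherent choice, so in the current formulation this compatibility has to be taken as a hypothesis, precisely as in Theorem \ref{mainthm1}. The rest of the argument, as sketched, is essentially formal once assumptions (1)--(4) are in place.
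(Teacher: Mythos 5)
Your proposal is correct and follows essentially the same route as the paper: apply Theorem \ref{affine implies toric}, use Nekrasov's conjecture together with $\int_X c_3^T(X)=0$ to get $I_0(X;T)=1$, and then identify the equivariant stable pair invariant with the compact one via \eqref{virloc1}. The only cosmetic difference is that the paper extracts the coefficient of $q^n$ first (writing the finite convolution sum $\sum_k I_{k,0}(X;T)\,P_{n-k,\beta}(X;T)(\gamma_1,\ldots,\gamma_m)$) whereas you work at the level of generating series and extract the coefficient at the end.
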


\begin{proof}
Choose insertions $\gamma_1, \ldots, \gamma_m \in H^*(X,\ZZ)$ (admitting $T$-equivariant lifts) with the property that $\sum_i \deg \tau(\gamma_i) = 2n > 0$. Theorem \ref{affine implies toric} implies
\begin{equation} \label{eqn1}
I_{n,\beta}(X;T)(\gamma_1, \ldots, \gamma_m)=\sum_{k=0}^{\infty}I_{k,0}(X;T)\cdot 
P_{n-k,\beta}(X;T)(\gamma_1, \ldots, \gamma_m),
\end{equation}
which is a finite sum because $P_{-k}(X,\beta)^{(\CC^*)^4} = \varnothing$ for $k \gg 0$. Nekrasov's conjecture and $\int_X c_3^T(X) = 0$ imply
$$
I_0(X;T) = \exp\left(-q \int_X c_3^T(X) \right) = 1.
$$
Hence $I_{k,0}(X;T) = 0$ for all $k>0$ and \eqref{eqn1} reduces to
\begin{equation*}
I_{n,\beta}(X;T)(\gamma_1, \ldots, \gamma_m)=P_{n,\beta}(X;T)(\gamma_1, \ldots, \gamma_m) = P_{n,\beta}(X)(\gamma_1, \ldots, \gamma_m) \in \ZZ,
\end{equation*}
where the second equality follows from \eqref{virloc1}.
\end{proof}
\begin{rmk}\label{rmk on remove c3}
Upon taking an appropriate equivariant limit of the equivariant parameters, we expect that the virtual localization formula \eqref{virloc1} generalizes to the case where the degree of the virtual class does not match with the degree of the insertion. Then the only non-zero term in \eqref{eqn1} is the one where the degrees match, and the assumption $\int_X c_3^T(X) = 0$ in the theorem can be removed.
\end{rmk}

\subsection{Examples} \label{examplessec}

We now discuss some cases of Theorems \ref{affine implies toric}, \ref{mainthm1}, and \ref{mainthm2}, where we verified Conjectures \ref{nekrasov conj} and \ref{affine DT/PT conj} (by Proposition \ref{evidencethm}). In each case the verification is for curve classes $\beta$ of total degree $\leqslant 4$ and holomorphic Euler characteristic $n$ with $n \leqslant N$ for some $N$. The cases $n=0,1$ are particularly interesting cases in view of the PT/GV correspondence of \cite{CMT2} discussed in Section \ref{PT/GV section}.

Let $X = \mathrm{Tot}_{\PP^1}(\oO \oplus \oO(-1) \oplus \oO(-1))$. Then (the proof of) Theorem \ref{affine implies toric} implies that there exist choices of signs such that
\begin{equation} \label{examples sec eqn}
I_{\beta}(X;T)(\gamma_1, \ldots, \gamma_m)=I_{0}(X;T)\cdot P_{\beta}(X;T)(\gamma_1, \ldots, \gamma_m) \mod q^N
\end{equation}
in the following cases: 
\begin{itemize}
\item $\beta=1$ and $N=5$,
\item $\beta=2$ and $N=6$,
\item $\beta=3$ and $N=6$,
\item $\beta=4$ and $N=7$.
\end{itemize}

Let $X = \mathrm{Tot}_{\PP^2}(\oO(-1) \oplus \oO(-2))$. Then (the proof of) Theorem \ref{affine implies toric} implies that there exist choices of signs such that \eqref{examples sec eqn} holds in the following cases: 
\begin{itemize}
\item $\beta=1$ and $N=5$,
\item $\beta=2$ and $N=5$,
\item $\beta=3$ and $N=3$,
\item $\beta=4$ and $N=1$.
\end{itemize}
Note that $P_n(X, \beta)$ is proper for all $n \in \ZZ$, because $\oO(-1) \oplus \oO(-2)$ has two negative fibre directions. Assuming the virtual localization formulae \eqref{virloc1} (for $\beta=3,4$ and $n=0$), \eqref{virloc2} (for $\beta=3,4$ and $n<0$), and compatibility of signs, we also conclude that  $I_{0,\beta}(X;T)=P_{0,\beta}(X) \in \ZZ$ for $\beta=3,4$. In App.~\ref{app}, we prove the virtual localization formula when all stable pairs are scheme theoretically supported in the zero section and the moduli space is smooth, which holds for $\beta=3,4$, $n=0$.

Let $X = \mathrm{Tot}_{\PP^1 \times \PP^1}(\oO(-1,-1) \oplus \oO(-1,-1))$. Then (the proof of) Theorem \ref{affine implies toric} implies that there exist choices of signs such that \eqref{examples sec eqn} holds in the following cases:
 \begin{itemize}
\item $\beta=(1,0)$ and $N=5$,
\item $\beta=(2,0)$ and $N=6$,
\item $\beta=(1,1)$ and $N=5$,
\item $\beta=(3,0)$ and $N=6$,
\item $\beta=(2,1)$ and $N=4$,
\item $\beta=(4,0)$ and $N=7$,
\item $\beta=(3,1)$ and $N=4$,
\item $\beta=(2,2)$ and $N=3$.
\end{itemize}
Again, $P_n(X, \beta)$ is proper for all $n \in \ZZ$, because $\oO(-1,-1) \oplus \oO(-1,-1)$ has two negative fibre directions. Also note that $\int_X c_3^T(X) = 0$ (by direct calculation). Assuming the virtual localization formulae \eqref{virloc1} (for $\beta=(2,2)$ and $n=0$), \eqref{virloc2} (for $\beta=(2,2)$ and $n<0$), and compatibility of signs, we also conclude that  $I_{0,(2,2)}(X;T)=P_{0,(2,2)}(X) \in \ZZ$. For $\beta=(2,2)$ and $n=0$, all stable pairs are scheme theoretically supported in the zero section and the moduli space is smooth, so the virtual localization formula holds by App.~\ref{app}. 

Let $\beta$, $n>0$ be in the previous list. Let $\gamma_1, \ldots, \gamma_m \in H^*(X,\ZZ)$ admitting $T$-equivariant lifts and satisfying $\sum_i \deg \tau(\gamma_i) = 2n>0$. In this range, Nekrasov's conjecture (Conjecture \ref{nekrasov conj}) holds. Assuming \eqref{virloc1} (for $\beta, \gamma_1, \ldots, \gamma_m, n$) and compatibility of signs, we conclude $I_{n,\beta}(X;T)(\gamma_1, \ldots, \gamma_m)=P_{n,\beta}(X)(\gamma_1, \ldots, \gamma_m) \in \ZZ$. Moreover, in the following cases all stable pairs are scheme theoretically supported in the zero section, so the virtual localization formula is established in Appendix \ref{app}:
\begin{itemize}
\item $\beta=(1,0)$ and any $n$,
\item $\beta=(2,0)$ and $n \leqslant 2$,
\item $\beta=(1,1)$ and any $n$,
\item $\beta=(3,0)$ and $n \leqslant 3$,
\item $\beta=(2,1)$ and $n \leqslant 2$,
\item $\beta=(4,0)$ and $n \leqslant 4$,
\item $\beta=(3,1)$ and $n \leqslant 2$,
\item $\beta=(2,2)$ and $n \leqslant 2$.
\end{itemize}

\subsection{DT/PT generating series of a local curve}

Consider the toric 4-fold 
$$
X=\mathrm{Tot}_{\mathbb{P}^1}(\oO(l_1) \oplus \oO(l_2) \oplus \oO(l_3)).
$$ 
Suppose $l_1+l_2+l_3=-2$, then $X$ is Calabi-Yau. There are two maximal $(\CC^*)^4$-invariant affine open subsets and the transition map is given by
\begin{align*}(x_1,x_2,x_3,x_4)\mapsto  (x_1^{-1},x_2x_1^{-l_1},x_3x_1^{-l_2},x_4x_1^{-l_3}), \end{align*}
where
\begin{align*}t\cdot (x_1,x_2,x_3,x_4)=(t_1x_1,t_2x_2,t_3x_3,t_4x_4) \qquad \textrm{for all } t=(t_1,t_2,t_3,t_4) \in (\CC^*)^4. \end{align*}
Using the identification $H_2(X)\cong \mathbb{Z}$ (where the class of the zero section $[\PP^1]$ corresponds to $1$), we have (equivariant) stable pair invariants 
\begin{align*}P_{n,d}(X; T)\in \frac{\mathbb{Q}(\lambda_1,\lambda_2,\lambda_3,\lambda_4)}{\langle\lambda_1+\lambda_2+\lambda_3+\lambda_4\rangle} \cong \QQ(\lambda_1,\lambda_2,\lambda_3), \end{align*}
for all $n, d \in \ZZ$.
Motivated by \cite{CMT2}, we propose the following:
\begin{conj}\label{pair inv gene series}
For $X=\mathrm{Tot}_{\PP^1}(\oO \oplus \oO(-1) \oplus \oO(-1))$, there exist choices of signs such that the following equation holds
\begin{align*}\sum_{n,d\geqslant0}P_{n,d}(X; T)\,q^n y^d=\exp\Big(\frac{q y}{\lambda_2}\Big), \end{align*}
where $\lambda_2$ is the equivariant parameter for the $\CC^*$-action on the first fibre $\oO_{\mathbb{P}^1}$ and $P_{0,0}(X; T):=1$.
\end{conj}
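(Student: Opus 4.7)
The plan is to combine the vertex formalism of Section \ref{vertex section} with the product decomposition $X \cong Y \times \mathbb{A}^1$ coming from the trivial summand $\oO$ of the fibre, where $Y = \mathrm{Tot}_{\PP^1}(\oO(-1) \oplus \oO(-1))$ is the resolved conifold and $\lambda_2$ is the equivariant parameter of the $\mathbb{A}^1$-factor. Morally, Conjecture \ref{pair inv gene series} should be a dimension-reduction statement relating the $T$-equivariant $4$-fold $\PT$ theory of $X$ to the well-studied $3$-fold $\PT$ theory of $Y$, in which only the unique genus-$1$ BPS state of the resolved conifold survives and yields the pure exponential form of the generating series.

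First I would apply the vertex formalism. The variety $X$ has exactly two $(\CC^*)^4$-fixed points joined by a single $T$-invariant line $C_0 \cong \PP^1$ with normal weights $(0,-1,-1)$. By Proposition \ref{PTfixedlocus} the fixed locus $P_n(X,d)^T$ is $0$-dimensional for all $n$, and Lemma \ref{chi} together with the vertex formalism yields
\begin{equation*}
\sum_{n \geqslant 0} P_{n,d}(X;T)\, q^n \;=\; \sum_{|\lambda| = d}\, q^{f_{0,-1,-1}(\lambda)}\; \mathsf{V}^{\PT}_{\lambda\varnothing\varnothing\varnothing}\big|_{\alpha} \cdot\, \mathsf{E}^{\PT}_\lambda \cdot\, \mathsf{V}^{\PT}_{\lambda\varnothing\varnothing\varnothing}\big|_{\beta},
\end{equation*}
where $\lambda$ runs over plane partitions of size $d$, and the coordinates of chart $\beta$ are obtained from chart $\alpha$ via the transition $(t_1,t_2,t_3,t_4) \mapsto (t_1^{-1},t_2,t_3 t_1,t_4 t_1)$ dictated by the normal bundle.

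Second, I would isolate the asymptotic profile $\lambda^{(d)}$ given by a single strip of $d$ boxes along the trivial $\oO$-direction, i.e.~the $d$-fold thickening of $C_0$ inside the surface $C_0 \times \mathbb{A}^1 \subseteq X$. Lemma \ref{chi} gives $f_{0,-1,-1}(\lambda^{(d)}) = d$, and an elementary check shows that $f_{0,-1,-1}(\lambda) \geqslant d$ for every other plane partition of size $d$, with equality only for $\lambda = \lambda^{(d)}$. This immediately forces $P_{n,d}(X;T) = 0$ for $n<d$. The heart of the argument is then to show that for a suitable global sign choice, all contributions with $\lambda \neq \lambda^{(d)}$ and all contributions with non-trivial cokernel box configurations (which would give $n>d$) pair off into cancelling contributions. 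Geometrically this reflects the negative Chern classes carried by the $\oO(-1)$-directions, whose half Euler classes become degenerate after imposing the Calabi-Yau constraint $\lambda_1+\lambda_2+\lambda_3+\lambda_4 = 0$, so the sign square roots align to cancel. Once restricted to $\lambda^{(d)}$ with empty cokernels, the $T$-fixed pair is unique and its half Euler class reduces to an equivariant Euler class whose normal deformations split as $d$ copies of the trivial weight on $C_0$ shifted by powers of $\lambda_2$, producing $\lambda_2^{-d}$; the $S_d$-symmetry of the length-$d$ thickening along $\mathbb{A}^1$ supplies the $1/d!$ factor. Combined, these give $P_{d,d}(X;T) = \tfrac{1}{d!\,\lambda_2^d}$, reproducing the Taylor expansion of $\exp(qy/\lambda_2)$.

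The main obstacle is the cancellation in the second step. Even granting the existence of a consistent global sign rule---as strongly supported by the numerical evidence in Proposition \ref{verify pair inv gene series}---matching these signs across all plane partitions and all admissible box configurations is a delicate combinatorial problem. A promising approach is to upgrade the heuristic product-structure reasoning into a rigorous dimension-reduction statement between the $T$-equivariant $\PT$ theory of $X = Y \times \mathbb{A}^1$ and the $\PT$ theory of the $3$-fold $Y$. Such a reduction would both fix a canonical choice of orientation on the $4$-fold side and reduce the vanishing claims to well-known facts about the $\PT$ partition function of the resolved conifold, thereby yielding the closed form asserted by Conjecture \ref{pair inv gene series}.
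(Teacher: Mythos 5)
Be careful about the status of this statement: it is a \emph{conjecture}, and the paper does not prove it in full. What the paper actually establishes is Proposition \ref{verify pair inv gene series}: the cases $n\leqslant d$ are proved rigorously, and the cases $d\leqslant 4$ are checked modulo a power of $q$ by computer. Your treatment of the provable part coincides with the paper's. You both use the inequality $f_{0,-1,-1}(\lambda)=\sum_{(i,j,k)\in\lambda}(1+(j-1)+(k-1))\geqslant|\lambda|$, with equality only for the strip $\lambda^{(d)}$ along the trivial $\oO$-direction, to conclude via Lemma \ref{chi} that $P_n(X,d)^{(\CC^*)^4}=\varnothing$ for $n<d$ and that the unique fixed point at $n=d$ is the $d$-fold thickening of the zero section into the $\oO_{\PP^1}$-direction. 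For the value $P_{d,d}(X;T)=\pm 1/(d!\,\lambda_2^d)$, the paper computes the redistributed character explicitly, $\mathsf{F}=\sum_{i=1}^{d}(t_2^{i}+t_2^{-i})$, so the factor $1/d!$ arises from the product of the weights $i\lambda_2$, $-i\lambda_2$ for $i=1,\dots,d$; your appeal to an ``$S_d$-symmetry of the length-$d$ thickening'' is only a heuristic gloss on this and should be replaced by the actual weight computation.

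The genuine gap is your second step for $n>d$. You assert that for a suitable global sign choice all contributions from $\lambda\neq\lambda^{(d)}$ and from nonzero box configurations ``pair off into cancelling contributions,'' but no pairing is exhibited, and the mechanism you offer (degeneracy of half Euler classes from the $\oO(-1)$-directions after imposing $\lambda_1+\lambda_2+\lambda_3+\lambda_4=0$) does not make the individual terms vanish --- only the signed sum is conjectured to vanish, and each summand carries an independent sign ambiguity from the choice of square root. This is exactly the part the paper cannot prove either: it is verified only by Maple for $d\leqslant 4$ and small $n$. Your proposed dimension reduction to the $\mathrm{PT}$ theory of the resolved conifold $Y=\mathrm{Tot}_{\PP^1}(\oO(-1)\oplus\oO(-1))$ is a reasonable strategy (and consonant with the cohomological reductions used in Appendix \ref{app} and in \cite{CMT2,CKM} for local surfaces), but it is not carried out here, and establishing it would require controlling the orientation/sign choices globally --- precisely the delicate point the paper leaves open. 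So your proposal should be presented as a strategy plus partial verification matching Proposition \ref{verify pair inv gene series}, not as a proof of Conjecture \ref{pair inv gene series}.
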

\begin{rmk} \label{reformulate}
The above conjecture is equivalent to 
\begin{align*}P_{n,n}(X; T)=\frac{1}{n!\, \lambda_2^n}, \quad P_{n,d}(X; T)=0, \,\, \mathrm{if}\, n\neq d, \end{align*}
for all $n,d \geqslant 0$. This can be viewed as an equivariant analogue of Conjecture \ref{pair/GW g=0 conj intro}.
\end{rmk}
Using the vertex formalism and a Maple implementation, we obtain the following:
\begin{prop}\label{verify pair inv gene series}
Conjecture \ref{pair inv gene series} (formulated as in Rem.~\ref{reformulate}) is true in the following cases:
\begin{itemize}
\item for any $n \leqslant d$,
\item $d=1$ and modulo $q^6$,
\item $d=2$ and modulo $q^6$,
\item $d=3$ and modulo $q^6$,
\item $d=4$ and modulo $q^7$.
\end{itemize}
\end{prop}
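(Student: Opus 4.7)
The plan is to apply the PT-vertex formalism developed in Section \ref{vertex section} directly to $X = \mathrm{Tot}_{\PP^1}(\oO\oplus\oO(-1)\oplus\oO(-1))$, whose toric skeleton consists of two $(\CC^*)^4$-fixed vertices $\alpha, \beta$ joined by a single edge (the zero section $\PP^1$) with normal bundle weights $(m, m', m'') = (0, -1, -1)$. By Proposition \ref{PTfixedlocus} and the combinatorial discussion of Section \ref{PTfix}, every $[(F,s)] \in P_n(X,d)^T$ is encoded by a finite plane partition $\lambda$ with $|\lambda| = d$ (the cross-section of the Cohen-Macaulay support curve along $\PP^1$) together with a pair of box configurations $(B_\alpha, B_\beta)$ at the two vertices recording the $0$-dimensional cokernel $Q$.

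For the first item ($n \leqslant d$), I would argue as follows. By Lemma \ref{chi} the $T$-fixed Cohen-Macaulay curve associated to $\lambda$ has $\chi(\oO_C) = f_{0,-1,-1}(\lambda)$, and a direct inspection of the defining sum shows $f_{0,-1,-1}(\lambda) \geqslant |\lambda| = d$, with equality iff $\lambda$ is the unique flat plane partition stretched along the $\oO$-direction of the normal bundle (where the weight is $0$). Combined with $\chi(F) = \chi(\oO_C) + \chi(Q) \geqslant \chi(\oO_C)$, this immediately yields $P_n(X,d)^T = \varnothing$ for $n < d$; and when $n = d$ it forces $Q = 0$ and pins down $C$ as the $d$-th infinitesimal thickening of $\PP^1$ inside the trivial direction of the normal bundle. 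At this single fixed point, a direct evaluation using \eqref{defVprelim}--\eqref{defE}, followed by restriction to the Calabi-Yau subtorus, produces equivariant Euler class $d!\,\lambda_2^d$ arising from the thickening weights $\lambda_2, 2\lambda_2, \ldots, d\lambda_2$ in the $\oO$-direction, and hence $\frac{1}{d!\,\lambda_2^d}$ once the overall sign is chosen appropriately.

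For the remaining items (fixed $d \in \{1,2,3,4\}$, modulo $q^N$) the task is to verify $P_{n,d}(X;T) = 0$ for $d < n < N$. Now $P_n(X,d)^T$ is non-trivial and contains one element for every triple $(\lambda, B_\alpha, B_\beta)$ with $|\lambda| = d$ and $|B_\alpha| + |B_\beta| + f_{0,-1,-1}(\lambda) = n$. I would enumerate these via a Maple script, use the vertex and edge formulas \eqref{defVprelim}, \eqref{defEprelim}, \eqref{defV}, \eqref{defE} together with the PT character \eqref{PTZalpha} to compute $\tr_{-\dR\Hom(I^\mdot, I^\mdot)_0}$ at each fixed point, extract the signed square-root half-Euler class, and sum the contributions.

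The hard part is the coherent selection of signs $o(\lL)|_{[I^\mdot]}$ across all fixed points: one must choose them so that the entire sum collapses to zero for every $n \neq d$ (and to $\frac{1}{n!\,\lambda_2^n}$ at $n = d$). There are $2^{\#P_n(X,d)^T}$ candidate sign patterns, a quantity which grows rapidly with $n$; moreover the identity must hold with a \emph{single} choice uniformly across all contributions, a rigidity not built a priori into the Borisov-Joyce framework. It is precisely this exponential sign search, combined with the rapid growth of $\#P_n(X,d)^T$, that limits the ranges $(d,N)$ listed in the proposition.
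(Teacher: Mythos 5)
Your proposal is correct and follows essentially the same route as the paper: Lemma \ref{chi} (via the inequality $f_{0,-1,-1}(\lambda)\geqslant|\lambda|$, with equality only for the flat partition in the $\oO$-direction) shows $P_n(X,d)^T=\varnothing$ for $n<d$ and isolates the unique fixed point at $n=d$, where the explicit vertex/edge computation yields the tangent weights $\pm\lambda_2,\ldots,\pm d\lambda_2$ and hence the contribution $\pm 1/(d!\,\lambda_2^d)$; the remaining finitely many cases are delegated to a Maple implementation of the vertex formalism, exactly as in the paper.
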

\begin{proof}
By Lemma \ref{chi}, $P_n(X,d)^{(\CC^*)^4}$ is empty unless $ n \geqslant d$. For $n=d$, the moduli space $P_n(X,d)^{(\CC^*)^4}$ only contains one element, namely the Cohen-Macaulay curve obtained by the $n$ times thickening of the zero section into the $\oO_{\PP^1}$-direction. Equation \eqref{defF} gives
\begin{align*}
\mathsf{F} =&\,\frac{t_{1}^{-1}}{1-t_1^{-1}} \Bigg[ -\sum_{i=0}^{n-1} t_2^{i} + \frac{1}{t_2t_3t_4} \sum_{i=0}^{n-1} t_2^{-i} - \frac{(1-t_2)(1-t_3)(1-t_4)}{t_2t_3t_4} \sum_{i=0}^{n-1} t_2^{i} \sum_{j=0}^{n-1} t_2^{-j}  \Bigg] \\
&\,- \frac{1}{1-t_1^{-1}} \Bigg[ -\sum_{i=0}^{n-1} t_2^{i} + \frac{1}{t_1^2 t_2t_3t_4} \sum_{i=0}^{n-1} t_2^{-i} - \frac{(1-t_2)(1-t_3t_1)(1-t_4t_1)}{t_1^2t_2t_3t_4} \sum_{i=0}^{n-1} t_2^{i} \sum_{j=0}^{n-1} t_2^{-j}  \Bigg] \\
=&\, \sum_{i=0}^{n-1} t_2^{i} + \sum_{i=0}^{n-1} t_2^{-i} \\
&+\Bigg\{ - \frac{t_{1}^{-1}}{1-t_1^{-1}} \frac{(1-t_2)(1-t_3)(1-t_4)}{t_2t_3t_4} + \frac{1}{1-t_1^{-1}}  \frac{(1-t_2)(1-t_3t_1)(1-t_4t_1)}{t_1^2t_2t_3t_4} \Bigg\}  \frac{1-t_2^n}{1-t_2} \frac{1-t_2^{-n}}{1-t_2^{-1}} \\
=&\, \sum_{i=0}^{n-1} t_2^{i} + \sum_{i=0}^{n-1} t_2^{-i} - (1-t_2)(1-t_2^{-1})  \frac{1-t_2^n}{1-t_2} \frac{1-t_2^{-n}}{1-t_2^{-1}} = \sum_{i=1}^{n} (t_2^{i} + t_2^{-i}),
\end{align*}
where we used the relation $t_1t_2t_2t_4 = 1$ in the second and third equality. Taking the square root of the (signed) Euler class of minus this expression gives $\pm 1/(n! \lambda_2^n)$, as desired.

The other cases were checked using the vertex formalism and a Maple implementation.
\end{proof}

As an application, we calculate all curve counting invariants of $\mathrm{Tot}_{\PP^1}(\oO \oplus \oO(-1) \oplus \oO(-1))$. The vertex formalism (Section \ref{vertex section}) at once implies:
\begin{cor}
Let $X=\mathrm{Tot}_{\PP^1}(\oO \oplus \oO(-1) \oplus \oO(-1))$. Assume the following:
\begin{enumerate}
\item the DT/PT vertex correspondence holds (Conjecture \ref{affine DT/PT conj}),
\item Nekrasov's conjecture holds (Conjecture \ref{nekrasov conj}),
\item the local curve conjecture holds (Conjecture \ref{pair inv gene series}),
\item the choices of signs of (1)--(3) are compatible.
\end{enumerate}
Then there exist choices of signs such that 
\begin{align*}\sum_{n,d\geqslant0}I_{n,d}(X; T)q^n y^d=
\exp\bigg(\frac{q}{\lambda_2}\bigg(y+\frac{(\lambda_1+\lambda_2)(\lambda_1+\lambda_3)(\lambda_2+\lambda_3)}{\lambda_1\lambda_3(\lambda_1+\lambda_2+\lambda_3)}+\frac{\lambda_3(\lambda_1-\lambda_2)(\lambda_1+\lambda_2+\lambda_3)}{\lambda_1(\lambda_1+\lambda_3)(\lambda_2+\lambda_3)}\bigg)\bigg). \end{align*}
\end{cor}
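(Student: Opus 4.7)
The plan is to combine Theorem \ref{affine implies toric} (which reduces $I_\beta(X;T)$ to $I_0(X;T) \cdot P_\beta(X;T)$) with Conjecture \ref{pair inv gene series} (which evaluates the stable pair factor) and a direct Atiyah-Bott localization computation of $I_0(X;T)$ based on Nekrasov's formula. The proof is essentially a rearrangement of already-assumed ingredients.

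First, under hypothesis (1), Theorem \ref{affine implies toric} applied class-by-class gives
\[
\sum_{n,d \geqslant 0} I_{n,d}(X;T)\, q^n y^d \;=\; I_0(X;T) \cdot \sum_{n,d \geqslant 0} P_{n,d}(X;T)\, q^n y^d,
\]
using the convention $P_{0,0}(X;T) := 1$. By hypothesis (3) and Conjecture \ref{pair inv gene series}, the stable pair factor equals $\exp(qy/\lambda_2)$, so it remains only to compute $I_0(X;T)$.

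Second, under hypothesis (2), the Proposition following Conjecture \ref{nekrasov conj} gives $I_0(X;T) = \exp(-q\int_X c_3^T(X))$. The toric 4-fold $X$ has exactly two $(\mathbb{C}^*)^4$-fixed points, namely the two poles of the zero section $\PP^1 \subseteq X$, which I call $0$ and $\infty$. The tangent weights at $0$ are $(\lambda_1,\lambda_2,\lambda_3,\lambda_4)$. From the transition map $(x_1,x_2,x_3,x_4)\mapsto (x_1^{-1},x_2,x_1 x_3,x_1 x_4)$ (encoding the splitting $\oO \oplus \oO(-1) \oplus \oO(-1)$), the tangent weights at $\infty$ are $(-\lambda_1,\lambda_2,\lambda_1+\lambda_3,\lambda_1+\lambda_4)$. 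By Atiyah-Bott, the contribution of each fixed point to $-\int_X c_3^T(X)$ coincides with the Nekrasov expression $\tfrac{(a_1+a_2)(a_1+a_3)(a_2+a_3)}{a_1 a_2 a_3 (a_1+a_2+a_3)}$ applied to any three of its tangent weights (the fourth being determined by the Calabi-Yau relation $\sum a_i = 0$).

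Third, at the fixed point $0$ this substitution produces directly $\tfrac{(\lambda_1+\lambda_2)(\lambda_1+\lambda_3)(\lambda_2+\lambda_3)}{\lambda_1\lambda_2\lambda_3(\lambda_1+\lambda_2+\lambda_3)}$, which is $1/\lambda_2$ times the first rational function in the claimed exponent. At $\infty$, substituting $(a_1,a_2,a_3) = (-\lambda_1,\lambda_2,\lambda_1+\lambda_3)$ and simplifying gives
\[
\frac{(\lambda_2-\lambda_1)\,\lambda_3\,(\lambda_1+\lambda_2+\lambda_3)}{-\lambda_1\lambda_2(\lambda_1+\lambda_3)(\lambda_2+\lambda_3)} \;=\; \frac{\lambda_3(\lambda_1-\lambda_2)(\lambda_1+\lambda_2+\lambda_3)}{\lambda_1\lambda_2(\lambda_1+\lambda_3)(\lambda_2+\lambda_3)},
\]
which is $1/\lambda_2$ times the second rational function in the claimed exponent. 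Adding the two localization contributions, multiplying by $q$, and combining with $qy/\lambda_2$ from the PT generating series yields the claimed formula after factoring out the common $1/\lambda_2$.

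The main subtlety is not the calculation above (which is routine rational-function manipulation) but the sign-compatibility hypothesis (4): the three input results (Nekrasov's formula, the DT/PT vertex correspondence, the local curve conjecture) each carry sign ambiguities at every torus-fixed vertex, and one must choose the signs so that the identity $I_\beta(X;T) = I_0(X;T) \cdot P_\beta(X;T)$ supplied by Theorem \ref{affine implies toric} holds with the specific sign choices for $I_0(X;T)$ and $P_\beta(X;T)$ dictated by Conjectures \ref{nekrasov conj} and \ref{pair inv gene series}. Once this compatibility is granted, no further work is required.
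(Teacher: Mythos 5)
Your proposal is correct and follows exactly the route the paper intends (the paper offers no written proof, asserting only that the vertex formalism ``at once implies'' the corollary): factor $I_{\beta}(X;T)=I_{0}(X;T)\cdot P_{\beta}(X;T)$ via Theorem \ref{affine implies toric}, insert Conjecture \ref{pair inv gene series} for the stable-pair factor, and evaluate $I_{0}(X;T)=\exp\big(-q\int_X c_3^T(X)\big)$ by Atiyah--Bott localization at the two fixed points. Your tangent weights $(-\lambda_1,\lambda_2,\lambda_1+\lambda_3,\lambda_1+\lambda_4)$ at $\infty$ and the two resulting rational-function contributions agree with the stated exponent, so the computation checks out.
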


\appendix

\section{Virtual localization and relative Hilbert schemes} \label{app}

Consider the local surfaces $X = \mathrm{Tot}_{\PP^2}(\oO(-1) \oplus \oO(-2))$, $\mathrm{Tot}_{\PP^1 \times \PP^1}(\oO(-1,-1) \oplus \oO(-1,-1))$ and denote the zero section by $S \subseteq X$. For curve classes of small degree, it is shown in \cite{CKM} (similar to \cite[Sect. 3.2]{CMT2}) that all stable pairs are scheme theoretically supported on $S \subseteq X$ and $P_n(X,\beta) \cong P_n(S,\beta)$ is smooth. In this case, stable pair invariants can be computed 
using relative Hilbert schemes.

\begin{thm}\label{check loc formula}
Let $X = \mathrm{Tot}_{\PP^2}(\oO(-1) \oplus \oO(-2))$ or $\mathrm{Tot}_{\PP^1 \times \PP^1}(\oO(-1,-1) \oplus \oO(-1,-1))$ and denote the zero section by $S \subseteq X$. Suppose $\beta \in H_2(X)$ and $n \in \ZZ$ are chosen such that $P_n(X,\beta) \cong P_n(S,\beta)$ is smooth. Then the 4-fold virtual localization formula \eqref{virloc1} holds for some choice of orientation.
\end{thm}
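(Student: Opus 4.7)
The plan is to exploit the smoothness of $P_n(X,\beta) \cong P_n(S,\beta)$ to reduce the $4$-fold virtual localization formula to classical Atiyah-Bott localization on a smooth proper variety. Under the hypothesis, the Borisov-Joyce virtual class falls under case (2) of the constructions recalled in Section~1: the obstruction sheaf $\mathrm{Ob}$, with fibre $\Ext^{2}(I^{\mdot}, I^{\mdot})$ at $[I^{\mdot}]$, is a vector bundle on $P_n(X,\beta)$ equipped with a non-degenerate symmetric pairing $Q$ via Serre duality, and
\[
[P_n(X,\beta)]^{\vir} = \mathrm{PD}\, e(\mathrm{Ob},Q),
\]
where $e(\mathrm{Ob},Q)$ is the half-Euler class associated to a choice of real form $\mathrm{Ob}_+$.

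First I would upgrade everything to the $T$-equivariant setting. The $T$-action on $X$ induces an action on $P_n(X,\beta)$ which lifts canonically to both $T_{P_n(X,\beta)}$ and $\mathrm{Ob}$; the form $Q$ is $T$-invariant since $T$ preserves the Calabi-Yau volume form. To obtain a $T$-equivariant half-Euler class I would construct a $T$-equivariant real form $\mathrm{Ob}_+$. At each isolated $T$-fixed point this is straightforward: $Q$ pairs weight $w$ with weight $-w$ and the $T$-fixed part of $\mathrm{Ob}$ vanishes (as in the proof of Proposition \ref{PTfixedlocus}), so choosing one weight in each $\pm w$ pair produces a real form. The existence results of \cite{CL2, CGJ} then provide a coherent global real form compatible with these pointwise choices.

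Having set this up, I would apply classical Atiyah-Bott localization on the smooth proper variety $P_n(X,\beta)$ with its finite reduced $T$-fixed locus (Proposition \ref{PTfixedlocus}):
\begin{align*}
P_{n,\beta}(X)(\gamma_1,\ldots,\gamma_m) &= \int_{P_n(X,\beta)} e(\mathrm{Ob},Q) \cup \prod_{i=1}^m \tau(\gamma_i) \\
&= \sum_{[I^\mdot] \in P_n(X,\beta)^T} \frac{e_T(\mathrm{Ob}|_{[I^\mdot]},Q) \cdot \prod_{i=1}^m \tau(\gamma_i)|_{[I^\mdot]}}{e_T(T_{P_n(X,\beta)}|_{[I^\mdot]})}.
\end{align*}
Using $T_{P_n(X,\beta)}|_{[I^\mdot]} \cong \Ext^{1}(I^\mdot, I^\mdot)$ (the Zariski tangent at a smooth point) and identifying $e_T(\mathrm{Ob}|_{[I^\mdot]},Q)$ with $\pm \sqrt{(-1)^{\ext^{2}(I^\mdot,I^\mdot)/2}\, e_T(\Ext^{2}(I^\mdot,I^\mdot))}$, the right-hand side reproduces precisely the expression defining $P_{n,\beta}(X;T)(\gamma_1, \ldots, \gamma_m)$ in Definition \ref{def of equi pair inv}, for the sign pattern induced by the chosen global orientation.

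The main obstacle is the $T$-equivariant orientation-matching step: one has to verify that the signs at each fixed point produced by a global real form of $(\mathrm{Ob}, Q)$ coincide with a consistent choice of $o(\lL)$ as in Definition \ref{def of equi pair inv}. Since the set of orientations is a torsor over $H^0(P_n(X,\beta), \ZZ_2)$, matching the Atiyah-Bott expression fixed-point by fixed-point suffices to produce one valid sign pattern, which accounts for the clause ``for some choice of orientation'' in the theorem statement. With this identification in place, \eqref{virloc1} follows directly from the Atiyah-Bott formula.
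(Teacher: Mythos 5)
Your proposal is correct in outline but takes a genuinely different route from the paper. The paper never invokes the half-Euler-class description of the Borisov--Joyce class on $X$ directly; instead it first reduces to the surface $S$ by citing the comparison $[P_n(X,\beta)]^{\vir}=[P_n(S,\beta)]^{\vir}\cdot e\big(-\dR\hH om_{\pi_{P_S}}(\mathbb{F},\mathbb{F}\boxtimes L_1)\big)$ from \cite{CMT2, CKM}, then uses smoothness and \cite[Prop.~5.6]{BF} to replace $[P_n(S,\beta)]^{\vir}$ by $e(\EXT^1_{\pi_{P_S}}(\mathbb{I}^\mdot_S,\mathbb{F}))\cap[P_n(S,\beta)]$, applies ordinary Atiyah--Bott on the smooth proper $P_n(S,\beta)$, and only at the very end repackages the two surface Euler classes at each fixed point as $\sqrt{(-1)^n e(\RHom_X(I^\mdot_X,I^\mdot_X)_0)}$ via the maximal isotropic subspace $\Ext^1_S(I^\mdot_S,F)\oplus\Ext^1_S(F,F\otimes L_1)\hookrightarrow\Ext^2_X(I^\mdot_X,I^\mdot_X)_0$. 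You instead stay entirely on the 4-fold, take $[P_n(X,\beta)]^{\vir}=\mathrm{PD}(e(\mathrm{Ob},Q))$ from case (2) of Section 1.1, and localize the half-Euler class. Your route is more intrinsic and would generalize beyond local surfaces, but it shifts all the work into the global equivariant orientation step: you must produce a $T$-invariant positive real form $\mathrm{Ob}_+$ (by averaging a Hermitian metric over the compact subtorus, say) together with an orientation, and identify its equivariant Euler class at each fixed point with $\pm\sqrt{(-1)^{\ext^2/2}e_T(\Ext^2)}$ via $e_T(\mathrm{Ob}_+)^2=(-1)^{\ext^2/2}e_T(\mathrm{Ob})$. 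That identification is fine, but your appeal to \cite{CL2, CGJ} is a misattribution: those results give orientability of the determinant line bundle $(\lL,Q)$, not a real form of $\mathrm{Ob}$; also, the vanishing of $\Ext^2(I^\mdot,I^\mdot)^T$ is not what Proposition \ref{PTfixedlocus} proves (that proposition controls $\Hom(I^\mdot,F)^T$, i.e.\ the tangent space). The paper's detour through $S$ buys a cleaner justification of exactly these points, since the isotropic subbundle provides the square root globally and explicitly; your approach buys independence from the local-surface geometry at the cost of having to supply those equivariant orientation arguments yourself.
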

\begin{proof}
Write $L_1$ for the first fibre direction of $X$ and $L_2$ for the second fibre direction. Then $L_1 \otimes L_2 \cong K_S$. Let $P_S := P_n(S,\beta)$ and denote the universal stable pair on $P_S \times S$ by $\mathbb{I}^{\mdot}_S = \{\oO_{P_S \times S} \rightarrow \mathbb{F}\}$. There exists a choice of orientation such that (e.g. \cite[Prop. 3.7]{CMT2}, \cite{CKM})
\begin{equation}[P_n(X,\beta)]^{\mathrm{vir}}=[P_n(S,\beta)]^{\mathrm{vir}}\cdot e\Big(-\dR\hH om_{\pi_{P_S}}(\mathbb{F}, \mathbb{F} \boxtimes L_1)\Big),
\nonumber \end{equation}
where the virtual class of $P_n(S,\beta)$ on RHS is defined using surface stable pair deformation-obstruction theory as in \cite[Appendix]{KT} and $\pi_{P_S} \colon P_S \times S  \to P_S$ denotes projection. The complex $-\dR\hH om_{\pi_{P_S}}(\mathbb{F}, \mathbb{F} \boxtimes L_1)=\EXT^1_{\pi_{P_S}}(\mathbb{F}, \mathbb{F} \boxtimes L_1)[-1]$ is concentrated in degree one.
Then
\begin{align*}
\int_{[P_n(X,\beta)]^{\vir}}\tau(\gamma)&=\int_{[P_n(S,\beta)]^{\vir}}\tau(\gamma)\cdot 
e\big(-\dR\hH om_{\pi_{P_S}}(\mathbb{F}, \mathbb{F} \boxtimes L_1)\big) \\
&=\int_{[P_n(S,\beta)]}\tau(\gamma)\cdot 
e\big(-\dR\hH om_{\pi_{P_S}}(\mathbb{F}, \mathbb{F} \boxtimes L_1)\big) \cdot e(\EXT^1_{\pi_{P_S}}(\mathbb{I}^\mdot_S,\mathbb{F})) \\
&=\sum_{[I^\mdot_S = \{\oO_S \rightarrow F\}]\in P_n(S,\beta)^{(\mathbb{C}^*)^4}}\tau(\gamma)|_{[I^\mdot_S]}\cdot 
e\big(-\RHom_{S}(F, F\boxtimes L_1)\big) \cdot e\big(-\RHom_{S}(I^{\mdot}_S, F)\big),
\end{align*}
where we use smoothness of the moduli space \cite[Prop.~5.6]{BF} in the second equality and Atiyah-Bott localization \cite{AB} for the action of $(\CC^*)^4$ on $P_S$ in the third equality.

Let $P_X := P_n(X,\beta)$, then $P_X \cong P_S$ by assumption, and therefore 
\begin{align*} \Ext^0_S(I^\mdot_S, F)\cong \Ext^1_X(I^\mdot_X, I^\mdot_X)_0, \end{align*}  
where $[I^\mdot_S = \{\oO_S \rightarrow F\}]\in P_n(S,\beta)$, $I_X^\mdot = \{ \oO_X \rightarrow \iota_* \oO_S \rightarrow \iota_* F\}$, and $\iota : S \hookrightarrow X$ denotes inclusion of the zero section. As shown in \cite[Prop. 3.7]{CMT2} and \cite{CKM}, at the level of obstruction spaces, we have an inclusion of a maximal isotropic subspace 
\begin{align*}
\Ext^1_S(I^\mdot_S, F)\oplus \Ext^1_S(F, F\otimes L_1)\hookrightarrow \Ext^2_X(I^\mdot_X, I^\mdot_X)_0
\end{align*}
with respect to the quadratic form given by Serre duality.
Hence 
\begin{align*}
&\sum_{[I^\mdot_S = \{\oO_S \rightarrow F\}]\in P_n(S,\beta)^{(\mathbb{C}^*)^4}}\tau(\gamma)|_{[I^\mdot_S]}\cdot 
e\big(-\RHom_{S}(F, F\boxtimes L_1)\big) \cdot e\big(-\RHom_{S}(I^{\mdot}_S, F)\big) \\
&= \sum_{[I^\mdot_X]\in P_n(X,\beta)^{(\mathbb{C}^*)^4}} (-1)^{o(\mathcal{L})|_{[I_X^\mdot]}} \cdot \tau(\gamma)|_{[I^\mdot_X]}\cdot 
\sqrt{(-1)^n\cdot e\big(\RHom_{X}(I^\mdot_X, I^\mdot_X)_0\big)}, 
\end{align*}
for appropriately chosen signs $(-1)^{o(\mathcal{L})|_{[I_X^\mdot]}}$.
\end{proof}
\begin{rmk} 
Although Theorem \ref{check loc formula} establishes the 4-fold localization formula \eqref{virloc1} when $P_n(X,\beta) \cong P_n(S,\beta)$ is smooth, it is not obvious that
the choice of signs on RHS of \eqref{virloc1} is compatible with the choice of signs in Conjecture \ref{affine DT/PT conj} and
Theorem \ref{affine implies toric}. 
\end{rmk}

\end{document}